\colorlet{darkblue}{blue!50!black}
\colorlet{darkblue}{red!100!black}
\newtheorem{theorem}{Theorem}[section]
\newtheorem{lemma}[theorem]{Lemma}
\newtheorem{proposition}[theorem]{Proposition}
\newtheorem{definition}[theorem]{Definition}
\newtheorem{example}[theorem]{Example}
\newtheorem{remark}[theorem]{Remark}
\newtheorem{hypothesis}[theorem]{Hypothesis}
\let\originalleft\left
\let\originalright\right
\renewcommand{\left}{\mathopen{}\mathclose\bgroup\originalleft}
\renewcommand{\right}{\aftergroup\egroup\originalright}
\renewcommand{\d}{\/\mathrm{d}\/}
\def\w{\textbf{W}^{\varepsilon}_{{\theta}^{\varepsilon}}}
\def\L{\mathbb{L}}
\def\A{\mathrm{A}}
\def\I{\mathrm{I}}
\def\C{\mathrm{C}}
\def\f{\boldsymbol{f}}
\def\B{\mathrm{B}}
\def\D{\mathrm{D}}
\def\y{\boldsymbol{y}}
\def\E{\mathbb{E}}
\def\X{\mathbb{X}}
\def\x{\boldsymbol{x}}
\def\z{z }
\def\v{\boldsymbol{u}}
\def\w{\boldsymbol{w}}
\def\W{\mathrm{W}}
\def\N{\mathbb{N}}
\def\V{\mathbb{V}}
\def\wi{\widetilde}
\def\u{\mathrm{U}}
\def\u{\boldsymbol{v}}
\def\H{\mathbb{H}}
\newcommand{\R}{\mathbb{R}}
\renewcommand{\d}{\/\mathrm{d}\/}
\newcommand{\Addresses}{{
		\footnote{
			\noindent \textsuperscript{1,2}Department of Mathematics, Indian Institute of Technology Roorkee-IIT Roorkee,
			Haridwar Highway, Roorkee, Uttarakhand 247667, INDIA.\par\nopagebreak
			\noindent  \textit{e-mail:} \texttt{Manil T. Mohan: maniltmohan@ma.iitr.ac.in, maniltmohan@gmail.com.}
			
			\textit{e-mail:} \texttt{Kush Kinra: kkinra@ma.iitr.ac.in.}
			
			\noindent \textsuperscript{*}Corresponding author.
			
			\textit{Key words:}  Bi-spatial pullback random attractor, 2D stochastic Navier-Stokes equations, whole space, exponential stability, invariant measures.
			
			Mathematics Subject Classification (2020): Primary 35B41, 35Q35; Secondary 37L55, 37N10, 35R60.

}}}
\begin{document}
	
	\title[2D SNSE on the whole space]{Bi-spatial random attractors, a stochastic Liouville type theorem and ergodicity for stochastic Navier-Stokes equations on the whole space
		\Addresses}
	
	\author[K. Kinra and M. T. Mohan]
	{Kush Kinra\textsuperscript{1} and Manil T. Mohan\textsuperscript{2*}}

	\maketitle
	
	\begin{abstract}
		This article concerns the random dynamics and asymptotic analysis of the well known mathematical model,
		\begin{align*}
			\frac{\partial \boldsymbol{v}}{\partial t}-\nu \Delta\boldsymbol{v}+(\boldsymbol{v}\cdot\nabla)\boldsymbol{v}+\nabla p=\boldsymbol{f}, \ \nabla\cdot\boldsymbol{v}=0,
		\end{align*}
		 the Navier-Stokes equations. We consider the two-dimensional  stochastic Navier-Stokes equations (SNSE) driven by a \textsl{linear multiplicative white noise of It\^o type} on the whole space $\mathbb{R}^2$. Firstly, we prove that  the non-autonomous 2D SNSE generates a bi-spatial $(\mathbb{L}^2(\mathbb{R}^2),\mathbb{H}^1(\mathbb{R}^2))$-continuous random cocycle. Due to the bi-spatial continuity property of the random cocycle associated with SNSE, we show that if the initial data is in $\mathbb{L}^2(\mathbb{R}^2)$, then there exists a unique bi-spatial $(\mathbb{L}^2(\mathbb{R}^2),\mathbb{H}^1(\mathbb{R}^2))$-pullback random attractor for non-autonomous SNSE which is compact and attracting not only in $\mathbb{L}^2$-norm but also in $\mathbb{H}^1$-norm. Next, as a consequence of the existence of pullback random attractors, we prove the existence of a family of invariant sample measures for non-autonomous random dynamical system generated by 2D non-autonomous SNSE. Moreover, we show that the family of invariant sample measures satisfies a stochastic Liouville type theorem. Finally, we discuss the existence of an invariant measure for the random cocycle associated with 2D autonomous SNSE. We prove the uniqueness of invariant measures for $\boldsymbol{f}=\mathbf{0}$ and for any $\nu>0$ by using the linear multiplicative structure of the noise coefficient and exponential stability of solutions. The above results for SNSE defined on $\mathbb{R}^2$ are totally new, especially the results on bi-spatial random attractors and stochastic Liouville type theorem for 2D SNSE with linear multiplicative noise are obtained in any kind of domains for the first time. We observe that in contrast to Stratonovich noise, which is used widely in the literature to study the random dynamics of SNSE, It\^o noise is more adequate in the case of whole space. This work settles down several open problems regarding random attractors, invariant measures and ergodicity for 2D SNSE on the whole space. 
	\end{abstract}

	\section{Introduction} \label{sec1}\setcounter{equation}{0}
	\subsection{Literature survey and motivations}
		It is well known that the explicit solutions of ordinary/partial differential equations (ODE/PDE) are very difficult to find (cf. \cite{KH,SSTC} and references therein). Therefore, one requires  a qualitative theory to understand the asymptotic behavior of their solutions. The theory of attractors plays an important role to capture the long time behavior of the solutions of ODE/PDE. The theory of global attractors for the deterministic infinite-dimensional dynamical systems (DS) is well-investigated in \cite{JCR,R.Temam}, etc. The theory of global attractors (for deterministic DS) has been extended to random attractors (for compact random DS, cf. \cite{Arnold}) in the works \cite{CDF,CF} etc., and it has been used for several physically relevant stochastic models (cf. \cite{FY,KM3,KRM,XC} etc. and references therein). The author in \cite{SandN_Wang}  introduced a \emph{sufficient and necessary criteria for the existence of a unique pullback random attractor} for non-compact non-autonomous random DS and applied it to stochastic reaction-diffusion equations driven by additive noise. Later, it has been applied to several stochastic models, cf. \cite{GGW,KM6,PeriodicWang,RKM,WSLW},  etc., and references therein.

		   The current work is mainly focused on the random dynamics of \emph{autonomous and non-autonomous 2D stochastic Navier-Stokes equations (SNSE) driven by a linear multiplicative white noise} defined on the whole space. Given $\tau\in\R$, we consider the following non-autonomous 2D SNSE on $\R^2$:
	\begin{equation}\label{1}
		\left\{
		\begin{aligned}
			 \d\u&=-\left[-\nu \Delta\u+(\u\cdot\nabla)\u+\nabla p-\boldsymbol{f}\right]\d t+\sigma\u\d \W, \hspace{10mm}   \text{ in }\  \R^2\times(\tau,\infty), \\ \nabla\cdot\u&=0, \hspace{84.5mm} \text{ in } \R^2\times(\tau,\infty), \\
			\u|_{t=\tau}&=\u_{\tau}, \hspace{83.5mm}  x\in \R^2 \ \text{ and }\ \tau\in\R,\\
			\u(x)&\to 0\hspace{85.3mm}   \text{ as }\ |x|\to\infty, 
		\end{aligned}
		\right.
	\end{equation}
	where $\u(x,t)\in \R^2$ stands for the velocity field and $p(x,t)\in\R$ denotes the pressure field, for all $(x,t)\in\R^2\times(\tau,+\infty)$. The external forcing  $\f$ is either autonomous ($\f(x)\in\R^2,$ for $x\in\R^2$) or non-autonomous ($\f(x,t)\in\R^2,$ for $(x,t)\in\R^2\times(\tau,+\infty)$). The coefficient $\nu>0$ represents the \emph{kinematic viscosity} of the fluid and $\sigma>0$ is known as the  \emph{noise intensity}. Here, the stochastic integral should be understood in the sense of  It\^o and $\W=\W(t,\omega)$ is a one-dimensional two-sided Wiener process defined on some filtered probability space $(\Omega, \mathscr{F}, (\mathscr{F}_{t})_{t\in\R}, \mathbb{P})$ (see Section \ref{sec2}).

	 The solvability of 2D deterministic/stochastic Navier-Stokes equations (NSE) on bounded or unbounded domains is well studied in the literature (existence as well as uniqueness), cf. \cite{FSX,FMRT,GZ,MS,SS,R.Temam} etc., and references therein. In three-dimensions, the existence of at least one weak solution of 3D NSE is known due to Leary and Hopf (cf. \cite{Leray,Hopf}). But the uniqueness is still a mathematically challenging open problem. Recently, in \cite{BV}, it has been shown by a convex integration technique that the weak solutions of 3D Navier-Stokes equations are not unique in the class of weak solutions with finite kinetic energy. Therefore, in this work, we focus only on two-dimensions. 
	 
	 The dynamics of 2D deterministic/stochastic NSE including the existence of unique global/ pullback/random attractors also have a plenty of literature, cf. \cite{BL,CLR1,GLS2,GGW,Robinson,Rosa,R.Temam,RKM} etc., and references therein. It has been observed  that the existence of global/pullback/ random attractors for 2D deterministic/stochastic NSE is well-studied either on periodic (cf. \cite{HCPEK,Robinson,R.Temam} etc.), bounded  (\cite{CDF,GGW,KRM} etc.), and unbounded Poincar\'e (\cite{BL,CLR1,GLW,Rosa} etc.) domains, and not on the whole space.  Until now, the existence of random attractors is based on a transformation which converts a stochastic system into an equivalent pathwise deterministic system. Such types of transformations are available in the literature when the noise is either \emph{additive or linear multiplicative}. Moreover, it has been noticed in the literature that  the SNSE driven by a linear multiplicative noise has been considered in the sense of Stratonovich. We consider the SNSE driven by a linear multiplicative noise in the \emph{It\^o sense} which is one of the main motivations  to do this work on the whole space $\R^2$. \emph{The existence of global/pullback attractors for deterministic NSE and global/pullback random attractors for SNSE driven by additive noise on the whole space is still an open problem}.

	  The first aim of this work is to show the existence of a unique bi-spatial pullback random attractor for the non-autonomous SNSE \eqref{1} on the whole space $\R^2$. Stochastic PDE such as 2D SNSE (cf. \cite{MS}), stochastic convective Brinkman-Forchheimer equations (cf. \cite{MTM1}), stochastic fractional power dissipative equation (cf. \cite{WZhao}), stochastic semilinear Laplacian equations (cf. \cite{LGL}) etc., have regularizing solutions. In other words, when initial data is in a Banach space (an initial space), the corresponding solution may belong to a more regular Banach space (a terminal space). To represent the better dynamics for such equations, the authors in \cite{LGL} introduced the concept of \emph{bi-spatial random attractor} which is an invariant and compact random set attracting the subsets of the initial space in the topology of the terminal space. Later, the authors in \cite{CLY} generalized the work of \cite{LGL} for non-autonomous random DS also, which has been applied to several physically relevant models, cf. \cite{rwang2,rwang3}, etc., and references therein. Recently, the author in \cite{WZhao} introduced an abstract theory of bi-spatial pullback attractors for the bi-spatial continuous cocycle and successfully applied it to the stochastic fractional power dissipative equation. To the best of our knowledge, there is no result available in the literature on the \emph{existence of a unique bi-spatial pullback random attractor for 2D SNSE either on Poincar\'e domains (bounded or unbounded) or on  the whole space $\R^2$.}

		The second goal of this paper is to prove the existence of \emph{a family of invariant sample measures} (see Definition \ref{D-ISM} below) which satisfies \emph{a stochastic Liouville type theorem} (cf. \cite{CY}) for the 2D non-autonomous SNSE \eqref{1} on the whole space $\R^2$. This result is obtained with the aid of the existence of pullback random attractors and an abstract theory available in \cite[Theorem 3.1]{CY}. Sufficient criteria for the existence of a family of invariant sample measures for autonomous and non-autonomous random DS (under the assumption on the existence of global/pullback random attractors) is established in \cite{ZWC,CY}, respectively. In addition, the works \cite{ZWC,CY} reveal that the family of invariant sample measures will be supported by the global/pullback random attractors. Note that if the random statistical equilibrium (that is, the instantaneous quantities still vary widely in time but average quantities seem stationary)	has been reached by the 2D SNSE \eqref{1} (or the system \eqref{CNSE} below), then the statistical information do not change with time (cf. \cite{RMSRosa}). In this situation, the shape of the pullback random attractor $\mathscr{A}(t,\omega)$ changes randomly with respect to time along with the sample points $\omega\in\Omega,$ but the measures of $\mathscr{A}(t,\omega)$ on each time are the same with respect to the family of invariant sample measures. This is the stochastic (or random) version of the Liouville Theorem in Statistical Mechanics (cf. \cite{ZWC} for autonomous random DS and  \cite{CY} for non-autonomous random DS). We refer readers to the works \cite{RMSRosa,ZLC} for more detailed information about statistical equilibrium, trajectory statistical solutions and Liouville theorem in Statistical Mechanics (see Remark \ref{Sto-Liv} also). An abstract theory for a \emph{random Liouville type theorem}  has been established in \cite[Theorem 5.1]{CY}. Due to the fact that Brownian paths are continuous but nowhere differentiable, we cannot apply the abstract theory in \cite[Theorem 5.1]{CY} directly to the system \eqref{SNSE} with white noise. Therefore, we prove a stochastic version of  Liouville type theorem and use the terminology \textit{a stochastic Liouville type theorem} (cf. \cite[Subsection 6.3]{CY} and Theorem \ref{SLTT}).

	  The final objective of this article is to demonstrate the \emph{ergodic properties (via proving the existence of a unique invariant measure)} for the 2D autonomous SNSE \eqref{1} which is one of the most considerable qualitative properties of a dynamical system. The ergodic properties of the randomly forced NSE on torus or bounded domains have been extensively studied by several researchers during the past two decades. The first work on the  ergodicity for 2D SNSE on bounded domains (for any $\nu>0$) driven by additive noise (see \cite{Flandoli}, for the existence of invariant measures)  was carried out  in \cite{FM}, where the uniqueness of invariant measures was proved under some extra assumptions on the noise (cf. \cite{FM} for more details). But, in \cite{BF}, the author proved the same results as in \cite{FM} under weaker assumptions on the noise. Later, instead of taking extra assumptions on the noise, the author in \cite{Mattingly} proved the ergodicity for 2D SNSE driven by additive noise for sufficiently large $\nu$. The papers \cite{BF1,HM,HMJC,KS,KS1,MTSS1,MKS,WMS,WM} etc., discussed various results on the ergodic behavior of different types of partial differential equations covering 2D SNSE and  the works \cite{Debussche,Mattingl2}, etc. are good expository articles on this subject, and references therein. The authors in \cite{GMR,KS1}, etc. demonstrated the ergodicity property for 2D SNSE using the asymptotic coupling method, where the proof of uniqueness of invariant measures does not require any restriction on $\nu>0$. In the literature, most of the works regarding the existence and uniqueness of invariant measures as well as ergodic behavior of 2D SNSE have been considered  either on  a torus or on bounded domains. In some of the works such as \cite{BL,BMO} etc., the authors established the existence of invariant measures for 2D SNSE driven by additive or multiplicative It\^o noise  on unbounded Poincar\'e domains, but they have not discussed the uniqueness. Recently, in \cite{Nersesyan}, the author proved the ergodicity results for 2D SNSE   on unbounded Poincar\'e domains, where the author used a random force of additive type having a special structure. It appears to us that the current work is the first one in which \emph{the existence and uniqueness of ergodic and strongly mixing invariant measures  for 2D autonomous SNSE driven by a linear multiplicative noise in the whole space $\R^2$} is discussed for any $\nu>0$ and the results are new in the context of multiplicative noise as well as for 2D SNSE on the whole space $\R^2$.

		\subsection{Difficulties and approaches} 
		If one considers a damped  2D deterministic/stochastic NSE with linear damping, that is,
		\begin{align*}
			\d\u+\left[-\nu \Delta\u+(\u\cdot\nabla)\u+\alpha\u+\nabla p\right]=\boldsymbol{f}\d t+\sigma\u\d \W,  \  \nabla\cdot\u=0, 
		\end{align*}
	where $\alpha>0$ and $\sigma\geq 0$, then	the existence of global/pullback/random attractors can be proved on the whole space (cf. \cite{IPZ,JH} etc.). Even though we do not have any linear damping term in \eqref{1}, by using a suitable transformation (see \eqref{COV} below), we obtain an equivalent pathwise deterministic system (see \eqref{CNSE} below) which helps us to prove the results on the whole space. It is an advantage of considering the stochastic system \eqref{1} in the sense of  It\^o, rather than Stratonovich. 
		
		In order to prove the bi-spatial $(\L^2(\R^2),\H^1(\R^2))$-pullback random attractors for the system \eqref{1}, we need to estimate the nonlinear  functional $\int_{\R^2}(\v(x)\cdot\nabla)\v(x)\cdot\Delta\v(x)\d x$ (see \eqref{ei3} below) in an appropriate way. We observe that one cannot use the estimates obtained in \cite[Chapter 2, section 2.3]{Temam1} to estimate the above nonlinear term. In view of H\"older's, Ladyzhenskaya's  (\cite[Lemma 1, Chapter I]{OAL}), Sobolev's (\cite[Theorem 7, pp. 190]{DMDZ}) and interpolation (\cite[Theorem 6, pp. 200]{DMDZ}) inequalities, we estimate the above nonlinear term (see Remark \ref{TriEsti} below). Note that for the periodic case, this integral is zero \cite[Lemma 3.1]{Temam1}, and for the bounded or unbounded Poincar\'e domains case, one can use the estimates form  \cite[Chapter 2, Section 2.3]{Temam1} to obtain the results similar to this work.  As the existence of pullback attractors for 2D deterministic NSE (\eqref{1} with $\sigma=0$) is still an open problem, the upper semicontinuity (as $\sigma\to 0$) of the obtained pullback random attractors is also an open problem (Remark \ref{rem3.14}). 
		
		The existence of random attractors helps us to obtain the existence of invariant measures (see \cite[Corollary 4.6]{CF} and Definition \ref{IMRDS} below), and a family of invariant sample measures (\cite[Theorem 2.1]{ZWC}, \cite[Theorem 5.1]{CY} and Definition \ref{D-ISM} below) which satisfies a stochastic (or random) Liouville type theorem for the system \eqref{1} (cf. \cite{CF,ZWC} etc.). As discussed above, due to the nowhere differentiability of Brownian paths, we do not apply \cite[Theorem 5.1]{CY} directly. We use the It\^o formula in infinite dimensions along with the invariant property of invariant sample measures to show that the family of invariant sample measures satisfies a stochastic Liouville type equation (see \eqref{SLTT1} in Theorem \ref{SLTT} below). Note that Theorem \ref{SLTT} is different from random Liouville type theorems discussed in \cite[Theorem 3.2]{ZWC} and \cite[Theorem 5.1]{CY}. 
		
		The main difficulties arise in the uniqueness of invariant measures part. We consider the deterministic forcing term $\f=\mathbf{0}$ and obtain the uniqueness of invariant measure (Dirac measure centered at zero) for any $\nu>0,$ where the linear multiplicative structure of the white noise coefficient and exponential stability of solutions (see Lemma \ref{ExpoStability}) play a crucial role. The uniqueness of invariant measures  for $\f\neq \mathbf{0}$ is still an open problem for 2D SNSE in $\R^2$. 
		
		\subsection{Novelties} Most of the results regarding the random dynamics and  asymptotic analysis of 2D SNSE available in the literature are either on bounded domains (cf. \cite{BF,GMR,HM,KRM,RKM}, etc.) or on unbounded Poincar\'e domains (cf. \cite{BL,BMO,Nersesyan}, etc.).   This work settles down several open problems concerning the random dynamics of 2D SNSE defined on the whole space and this work appears to be the first one in this direction. Moreover, we emphasize here that all the results obtained in this paper hold true even  in bounded as well  as unbounded Poincar\'e domains.   The major aims and novelties of this work are:
		\begin{itemize}
			\item [(i)] \emph{For the 2D non-autonomous SNSE \eqref{1}, we prove the existence of unique bi-spatial $(\L^2(\R^2),$ $\H^1(\R^2))$-pullback random attractors (Theorem \ref{MainTheoRA}).}
			
			\item [(ii)] \emph{For the 2D non-autonomous SNSE \eqref{1}, we show the existence of a family of invariant sample measures on $\L^2(\R^2)$ which satisfies a stochastic Liouville type theorem (Theorems \ref{ISM} and  \ref{SLTT}).}
			
			\item [(iii)] \emph{For the 2D autonomous SNSE \eqref{1}, for any $\f\in\L^2(\R^2)$, we establish the existence of an invariant measure in $\L^2(\R^2)$ as well as in $\H^1(\R^2)$ (Theorem \ref{thm6.3} and Remark \ref{4.4}) and its uniqueness for $\f=\mathbf{0}$ (Theorem \ref{UEIM}).}
		\end{itemize}
	The linear multiplicative structure of the It\^o type noise coefficient helps us to resolve the above mentioned problems in $\R^2$. Moreover, we provide some remarks on the existence of $(\L^2(\R^2),$ $\L^2(\R^2))$-pullback random attractors,  the upper semicontinuity of the random attractors with respect to domains, and the asymptotic autonomy of the obtained pullback random attractors as well (Remarks \ref{rem3.13}, \ref{rem315} and \ref{rem3.15}).

\subsection{Outline} In the next section, we provide the necessary function spaces needed for the further analysis, and linear and bilinear operators to obtain an abstract formulation of the system \eqref{1}. Furthermore, we have furnished an abstract formulation of the system \eqref{1}, a pathwise deterministic system \eqref{CNSE} which is equivalent to the system \eqref{1}, random cocycle \eqref{Phi1},  and the universe of tempered sets, in the same section. In section \ref{sec3}, we first provide an abstract result for the existence of a unique bi-spatial pullback random attractor which is adapted from the work \cite[Theorem 2.10]{WZhao} (Theorem \ref{AbstractResult}). In order to apply the abstract result of Theorem \ref{AbstractResult}, we prove that the random cocycle generated by the system \eqref{1} is a bi-spatial $(\L^2(\R^2),\H^1(\R^2))$-continuous cocycle (Lemma \ref{Continuity}). Next, we prove that the random cocycle has a pullback random absorbing set (Lemma \ref{PA1}), the random cocycle is $(\L^2(\R^2),\L^2(\R^2))$-pullback asymptotically compact (Lemma \ref{PA2}) and continuous with respect to the sample points (Lemma \ref{LusinC}). Finally, using an abstract theory (Theorem \ref{AbstractResult}), we establish the main result of this section (Theorem \ref{MainTheoRA}). We start with some basic definitions for invariant measures (adapted from \cite{Arnold}) and prove the existence of a family of invariant sample measures which satisfies a stochastic Liouville type theorem for the system \eqref{1} (Theorems \ref{ISM} and \ref{SLTT}) in section \ref{sec4}. In the final section, we show that there exists an invariant measure for the system \eqref{1} in $\L^2(\R^2)$ as well as in $\H^1(\R^2)$ (Theorem \ref{thm6.3} and Remark \ref{4.4}) for any $\f\in\L^2(\R^2)$. At the end, we prove the exponential stability of solutions of \eqref{1} with $\f=\mathbf{0}$ (Lemma \ref{ExpoStability}) and using that we show the uniqueness of invariant measure for $\f=\mathbf{0}$ (Theorem \ref{UEIM}).

	\section{Mathematical Formulation}\label{sec2}\setcounter{equation}{0}
	In this section, we provide the necessary function spaces and operators needed to obtain the main results of this work. Further, we provide an abstract formulation of the system \eqref{1}, a pathwise deterministic system equivalent to the system \eqref{1} (which helps us to define the non-autonomous random dynamical system) and the universe of tempered sets.
	\subsection{Function spaces} 
	We define the space $\mathcal{V}:=\{\u\in\C_0^{\infty}(\R^2;\mathbb{R}^2):\nabla\cdot\u=0\},$ where $\C_0^{\infty}(\R^2;\mathbb{R}^2)$ denotes the space of all $\mathbb{R}^2$-valued infinite  times differentiable functions with compact support in $\mathbb{R}^2$. Let $\H$ and $\V$ denote the completion of $\mathcal{V}$ in $\L^2(\R^2):=\mathrm{L}^2(\R^2;\mathbb{R}^2)$ and $\H^1(\R^2):=\mathrm{H}^1(\R^2;\mathbb{R}^2)$ norms, respectively. The spaces $\H$ and $\V$ are endowed with the norms $\|\u\|_{\H}^2:=\int_{\R^2}|\u(x)|^2\d x$ and $\|\u\|^2_{\V}:=\int_{\R^2}|\u(x)|^2\d x+\int_{\R^2}|\nabla\u(x)|^2\d x$, respectively. The inner product in the Hilbert space $\H$ is represented by $( \cdot, \cdot)$, and the duality pairing between the spaces $\V$ and $\V^*$ is denoted by $\langle\cdot,\cdot\rangle.$ 	The $m^{\textrm{th}}$-order Sobolev spaces are denoted by $\H^m(\R^2):=\mathrm{H}^m(\R^2;\R^2)$ with the norm $\|\u\|^2_{\H^m(\R^2)}:=\sum\limits_{|\boldsymbol{k}|\leq m}\int_{\R^2}|\D^{\boldsymbol{k}}\u(x)|^2\d x$, where $\boldsymbol{k}=(k_1,k_2)$ is the  multiindex and $|\boldsymbol{k}|=k_1+k_2$. 
	\subsection{Linear operator}\label{LO}
Let $\mathcal{P}: \L^2(\R^2) \to\H$ denote the Helmholtz-Hodge orthogonal projection (cf.  \cite{OAL}). The projection operator $\mathcal{P}:\L^2(\R^2) \to\H$ can be expressed in terms of the Riesz transform (cf. \cite{MTSS}), and the operators $\mathcal{P}$ and $\Delta$ commutes, that is, $\mathcal{P}\Delta=\Delta\mathcal{P}$. Let us define the Stokes operator
\begin{equation*}
	\A\u:=-\mathcal{P}\Delta\u=-\Delta\u,\;\u\in\D(\A):=\V\cap\H^2(\R^2).
\end{equation*}
The operator $\A:\V\to\V^*$ is  linear and continuous. Moreover, the usual norm of  $\H^m(\R^2)$ is equivalent to (cf. \cite[Proposition 1, pp. 169]{DMDZ})
\begin{align}\label{EquiH2}
		\|\u\|_{\H^m(\R^2)}=\left(\int_{\R^2}(1+|\xi|^2)^{m}\left|\widehat{\u}(\xi)\right|^2\d \xi\right)^{\frac{1}{2}}=\|(\I-\Delta)^{\frac{m}{2}}\u\|_{\H}=\|(\I+\A)^{\frac{m}{2}}\u\|^2_{\L^2(\R^2)},
\end{align}
where $\widehat{\u}(\cdot)$ is the Fourier transform of $\u(\cdot)$.
	\subsection{Bilinear operator}
	Let us define the \emph{trilinear form} $b(\cdot,\cdot,\cdot):\V\times\V\times\V\to\R$ by $$b(\v,\u,\w)=\int_{\R^2}(\v(x)\cdot\nabla)\u(x)\cdot\w(x)\d x=\sum_{i,j=1}^2\int_{\R^2}\v_i(x)\frac{\partial \u_j(x)}{\partial x_i}\w_j(x)\d x.$$ An integration by parts gives 
	\begin{equation}\label{b0}
		\left\{
		\begin{aligned}
			b(\v,\u,\v) &= 0,\ \text{ for all }\ \v,\u \in\V,\\
			b(\v,\u,\w) &=  -b(\v,\w,\u),\ \text{ for all }\ \v,\u,\w\in \V.
		\end{aligned}
		\right.\end{equation} If $\v, \u$ are such that the linear map $b(\v, \u, \cdot) $ is continuous on $\V$, the corresponding element of $\V^*$ is denoted by $\B(\v, \u)$. We also denote $\B(\u) = \B(\u, \u)=\mathcal{P}[(\u\cdot\nabla)\u]$.
	
		\begin{remark}
		Note that $\langle\B(\v,\v-\u),\v-\u\rangle=0$, which  implies that
		\begin{equation}\label{441}
			\begin{aligned}
				\langle \B(\v)-\B(\u),\v-\u\rangle =\langle\B(\v-\u,\u),\v-\u\rangle=-\langle\B(\v-\u,\v-\u),\u\rangle.
			\end{aligned}
		\end{equation}
	\end{remark}	
	
	\begin{remark}\label{TriEsti}
		The following estimate on $b(\cdot,\cdot,\cdot)$ plays a crucial role in the sequel. Applying H\"older's, Ladyzhenskaya's (\cite[Lemma 1, Chapter I]{OAL}), Sobolev's (\cite[Theorem 7, pp. 190]{DMDZ}) and interpolation (\cite[Theorem 6, pp. 200]{DMDZ}) inequalities, and \eqref{EquiH2}, respectively, we obtain
		\begin{align}
			|b(\v,\u,\w)|&\leq\|\v\|_{\L^4(\R^2)}\|\nabla\u\|_{\L^4(\R^2)}\|\w\|_{\H}\nonumber\\&\leq C\|\v\|^{1/2}_{\H}\|\nabla\v\|^{1/2}_{\H}\|\u\|_{\H^{\frac{3}{2}}(\R^2)}\|\w\|_{\H}\nonumber\\&\leq C\|\v\|^{1/2}_{\H}\|\nabla\v\|^{1/2}_{\H}\|\u\|^{1/2}_{\V}\|\u\|^{1/2}_{\H^{2}(\R^2)}\|\w\|_{\H}\nonumber\\&\leq C\|\v\|^{1/2}_{\H}\|\nabla\v\|^{1/2}_{\H}\big(\|\u\|^{1/2}_{\H}+\|\nabla\u\|^{1/2}_{\H}\big)\|(\I+\A)\u\|^{1/2}_{\H}\|\w\|_{\H}\nonumber\\&\leq C\|\v\|^{1/2}_{\H}\|\nabla\v\|^{1/2}_{\H}\|\u\|^{1/2}_{\H}\big(\|\u\|^{1/2}_{\H}+\|\A\u\|^{1/2}_{\H}\big)\|\w\|_{\H}\nonumber\\&\quad+ C\|\v\|^{1/2}_{\H}\|\nabla\v\|^{1/2}_{\H}\|\nabla\u\|^{1/2}_{\H}\big(\|\u\|^{1/2}_{\H}+\|\A\u\|^{1/2}_{\H}\big)\|\w\|_{\H}\label{b1}.
		\end{align}
	\end{remark}

\subsection{Abstract formulation}	Taking the projection $\mathcal{P}$ on SNSE \eqref{1}, we obtain for $t\geq \tau,$ $\tau\in\mathbb{R}$ 
	\begin{equation}\label{SNSE}
		\left\{
		\begin{aligned}
			\d\u&=-\left[\nu \A\u+\B(\u)-\mathcal{P}\f\right]\d t +\sigma\u\d \W, \qquad \text{ in } \R^2\times(\tau,\infty), \\ 
			\u|_{t=\tau}&=\u_{\tau}, \hspace{64mm} x\in \R^2,
		\end{aligned}
		\right.
	\end{equation}
		where  $\sigma>0$ and the stochastic integral is understood in the It\^o sense. Here, $\W(t,\omega)$ is the standard scalar Wiener process on the probability space $(\Omega, \mathscr{F}, \mathbb{P}),$ where $$\Omega=\{\omega\in C(\R;\R):\omega(0)=0\},$$ endowed with the compact-open topology given by the complete metric
		\begin{align*}
			d_{\Omega}(\omega,\omega'):=\sum_{m=1}^{\infty} \frac{1}{2^m}\frac{\|\omega-\omega'\|_{m}}{1+\|\omega-\omega'\|_{m}},\text{ where } \|\omega-\omega'\|_{m}:=\sup_{-m\leq t\leq m} |\omega(t)-\omega'(t)|,
		\end{align*}
		and $\mathscr{F}$ is the Borel sigma-algebra induced by the compact-open topology of $(\Omega,d_{\Omega}),$ $\mathbb{P}$ is the two-sided Wiener measure on $(\Omega,\mathscr{F})$. Also, define $\{\theta_{t}\}_{t\in\R}$ by 
	\begin{equation*}
		\theta_{t}\omega(\cdot)=\omega(\cdot+t) -\omega(t), \ \ \   t\in\R\ \text{ and }\ \omega\in\Omega.
	\end{equation*}
	Hence, $(\Omega,\mathscr{F},\mathbb{P},\{\theta_{t}\}_{t\in\R})$ is a metric DS. We expect a solution to the system \eqref{SNSE} in the following sense:
	\begin{definition}\label{GWS}
		A stochastic process $\u(t):=\u(t;\tau,\omega,\u_{\tau})$ is a $\textsl{global (analytic) weak solution}$ of the system \eqref{1} for $t\geq\tau$ and $\omega\in\Omega$ if for $\mathbb{P}$-a.s. $\omega\in\Omega$
		\begin{align*}
			\u(\cdot)\in\mathrm{C}([\tau,\infty);\H)\cap\mathrm{L}^{\infty}_{\mathrm{loc}}(\tau,\infty;\V)
		\end{align*}
		and 
		\begin{align*}
			&(\u(t),\boldsymbol{\psi})+\nu\int_{\tau}^{t}(\nabla\u(\xi),\nabla\boldsymbol{\psi})\d\xi+\int_{\tau}^{t}b(\u(\xi),\u(\xi),\boldsymbol{\psi})\d\xi\nonumber\\&=(\u_{\tau},\boldsymbol{\psi})+\int_{\tau}^{t}(\boldsymbol{f}(\xi),\boldsymbol{\psi})\d\xi +\sigma\int_{\tau}^{t}(\u(\xi),\boldsymbol{\psi})\d\W(\xi),
		\end{align*}
		for every $t\geq\tau$ and for every $\boldsymbol{\psi}\in\V$.
	\end{definition}
	
	Let us now consider
\begin{align}\label{OU1}
	y(\theta_{t}\omega) =  \int_{-\infty}^{t} e^{-(t-\xi)}\d \W(\xi), \ \ \omega\in \Omega,
\end{align} which is the \textsl{stationary solution} of the one dimensional \textsl{Ornstein-Uhlenbeck equation}
\begin{align}\label{OU2}
	\d y(\theta_t\omega) +  y(\theta_t\omega)\d t =\d\W(t).
\end{align}
It is known from \cite{FAN} that there exists a $\theta$-invariant subset $\widetilde{\Omega}\subset\Omega$ of full measure such that $y(\theta_t\omega)$ is continuous in $t$ for every $\omega\in \widetilde{\Omega},$ and
\begin{align}
\lim_{t\to\infty}\frac{|\omega(t)|}{t}=	\lim_{t\to \pm \infty} \frac{|y(\theta_t\omega)|}{|t|}&=	\lim_{t\to \pm \infty} \frac{1}{t} \int_{0}^{t} y(\theta_{\xi}\omega)\d\xi =\lim_{t\to \infty} e^{-\delta t}|y(\theta_{-t}\omega)| =0,\label{Z3}
\end{align}
for all $\delta>0$. For further analysis of this work, we do not distinguish between $\widetilde{\Omega}$ and $\Omega$.
Since, $\omega(\cdot)$ has sub-exponential growth  (cf. \cite[Lemma 11]{CGSV}), $\Omega$ can be written as $\Omega=\bigcup\limits_{N\in\N}\Omega_{N}$, where
\begin{align*}
	\Omega_{N}:=\{\omega\in\Omega:|\omega(t)|\leq Ne^{|t|},\ \text{ for all }\ t\in\R\}, \ \text{ for every } \ N\in\N.
\end{align*}
Moreover, for each $N\in\N$, $(\Omega_{N},d_{\Omega_{N}})$ is a polish space (cf. \cite[Lemma 17]{CGSV}).
\begin{lemma}\label{conv_z}
	For each $N\in\N$, suppose $\omega_k,\omega_0\in\Omega_{N}$ are such that $d_{\Omega}(\omega_k,\omega_0)\to0$ as $k\to+\infty$. Then, for each $\tau\in\R$ and $T\in\R^+$ ,
	\begin{align}
		\sup_{t\in[\tau,\tau+T]}&\bigg[|y(\theta_{t}\omega_k)-y(\theta_{t}\omega_0)|+|e^{ y(\theta_{t}\omega_k)}-e^{ y(\theta_{t}\omega_0)}|\bigg]\to 0 \ \text{ as } \ k\to+\infty,\nonumber\\
		\sup_{k\in\N}\sup_{t\in[\tau,\tau+T]}&|y(\theta_{t}\omega_k)|\leq C(\tau,T,\omega_0).\label{conv_z2}
	\end{align}
\end{lemma}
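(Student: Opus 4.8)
The plan is to prove Lemma~\ref{conv_z} by exploiting the explicit representation \eqref{OU1} of the Ornstein--Uhlenbeck process together with the uniform sub-exponential bound available on $\Omega_N$. First I would rewrite $y(\theta_t\omega)$ in a form that does not involve the stochastic integral of $\omega$ but only $\omega$ itself. Using \eqref{OU1} and integrating by parts (or, more precisely, using the identity $y(\theta_t\omega)=\omega(t)-\int_{-\infty}^t e^{-(t-\xi)}\omega(\xi)\,\d\xi$, valid since $\omega$ has sub-exponential growth), one obtains a pathwise, $\omega$-continuous expression for $y(\theta_t\omega)$. This is the crucial reduction: convergence in $d_\Omega$ is exactly local uniform convergence of the paths, so once $y$ is expressed continuously in terms of $\omega$ via a locally uniformly convergent integral, the first claim will follow.

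Next I would estimate the difference. Fix $\tau\in\R$, $T>0$, and write, for $t\in[\tau,\tau+T]$,
\begin{align*}
	|y(\theta_t\omega_k)-y(\theta_t\omega_0)|
	\le |\omega_k(t)-\omega_0(t)|+\int_{-\infty}^{t}e^{-(t-\xi)}|\omega_k(\xi)-\omega_0(\xi)|\,\d\xi.
\end{align*}
I would split the integral at some large negative time $-M$: on $[-M,t]$ the integrand is controlled by $\sup_{|\xi|\le \max\{M,\tau+T\}}|\omega_k(\xi)-\omega_0(\xi)|$, which tends to $0$ as $k\to\infty$ by the definition of $d_\Omega$; on $(-\infty,-M]$ we use $|\omega_k(\xi)-\omega_0(\xi)|\le 2Ne^{|\xi|}$ (since $\omega_k,\omega_0\in\Omega_N$) together with $e^{-(t-\xi)}=e^{-t}e^{\xi}$ for $\xi\le 0$, so that $\int_{-\infty}^{-M}e^{-(t-\xi)}2Ne^{-\xi}\,\d\xi\le 2Ne^{-\tau}\cdot M^{-1}$-type decay — more simply $\int_{-\infty}^{-M}e^{\xi-t}\cdot 2Ne^{-\xi}\,\d\xi = 2Ne^{-t}\,$ is not integrable, so instead one keeps $e^{-(t-\xi)}e^{|\xi|}$ and notes for $\xi\le -M\le 0$ this equals $e^{-t}e^{2\xi}$, whose integral over $(-\infty,-M]$ is $\tfrac12 e^{-t}e^{-2M}\le \tfrac12 e^{-\tau}e^{-2M}$, which is small for $M$ large, uniformly in $k$. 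Choosing $M$ large first and then $k$ large makes the supremum over $t\in[\tau,\tau+T]$ as small as desired. The term $|e^{y(\theta_t\omega_k)}-e^{y(\theta_t\omega_0)}|$ is then handled by the mean value theorem together with the uniform bound $\sup_k\sup_{t\in[\tau,\tau+T]}|y(\theta_t\omega_k)|\le C$, which is the second assertion \eqref{conv_z2}.

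The uniform bound \eqref{conv_z2} itself I would obtain directly from the representation: for $t\in[\tau,\tau+T]$,
\begin{align*}
	|y(\theta_t\omega_k)|\le |\omega_k(t)|+\int_{-\infty}^t e^{-(t-\xi)}|\omega_k(\xi)|\,\d\xi
	\le Ne^{|t|}+\int_{-\infty}^t e^{-(t-\xi)}Ne^{|\xi|}\,\d\xi,
\end{align*}
and the last integral is finite and bounded by a constant depending only on $N$, $\tau$, $T$ (splitting the integral at $0$ and computing $\int_{-\infty}^0 e^{\xi-t}e^{-\xi}\d\xi + \int_0^t e^{\xi-t}e^{\xi}\d\xi$ explicitly); since all $\omega_k\in\Omega_N$ with the same $N$, and since convergence $d_\Omega(\omega_k,\omega_0)\to0$ forces the $\omega_k$ into $\Omega_N$ for a common $N$ (by hypothesis they already lie in $\Omega_N$), the constant $C(\tau,T,\omega_0)$ — equivalently $C(\tau,T,N)$ — is uniform in $k$.

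The main obstacle is purely technical: making the tail estimate over $(-\infty,-M]$ honest, i.e.\ pairing the exponential weight $e^{-(t-\xi)}$ with the sub-exponential growth $e^{|\xi|}$ correctly so that the product is genuinely integrable near $-\infty$ (it is, since for $\xi\le 0$ one has $e^{-(t-\xi)}e^{|\xi|}=e^{-t}e^{2\xi}$, integrable) and then extracting smallness uniformly in $k$. There is no deep difficulty — everything reduces to the explicit formula for $y$, dominated convergence, and the mean value theorem for $\exp$ — but care is needed to keep all bounds uniform over the compact time interval $[\tau,\tau+T]$ and over $k\in\N$ simultaneously.
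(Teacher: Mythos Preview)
Your overall plan---use the pathwise representation $y(\theta_t\omega)=\omega(t)-\int_{-\infty}^t e^{-(t-\xi)}\omega(\xi)\,\d\xi$, split the integral, and control the tail via the $\Omega_N$ bound---is the natural one, but the tail estimate contains a sign/absolute-value slip that breaks both parts of the argument. For $\xi\le 0$ one has $|\xi|=-\xi$, hence
\[
e^{-(t-\xi)}e^{|\xi|}=e^{\xi-t}\,e^{-\xi}=e^{-t},
\]
\emph{not} $e^{-t}e^{2\xi}$. You in fact noticed this non-integrability one line earlier (``$2Ne^{-t}$ is not integrable''), but the attempted fix reintroduces the same quantity with a wrong exponent. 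With the correct value, $\int_{-\infty}^{-M}e^{-(t-\xi)}\cdot 2Ne^{|\xi|}\,\d\xi=2Ne^{-t}\int_{-\infty}^{-M}\d\xi=+\infty$, and the same divergence appears in your argument for \eqref{conv_z2}: $\int_{-\infty}^{0}e^{\xi-t}e^{-\xi}\,\d\xi=+\infty$. The difficulty is structural rather than cosmetic: the decay rate of the Ornstein--Uhlenbeck kernel $e^{-(t-\xi)}$ exactly matches the growth rate $e^{|\xi|}$ permitted in the definition of $\Omega_N$, so membership in $\Omega_N$ by itself can never furnish an integrable dominating function for the tail, nor even guarantee convergence of the representation integral.

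A repair must bring in the sharper information \eqref{Z3}, namely $|\omega(t)|/|t|\to 0$ for every $\omega$ in (the already restricted) $\Omega$. This sublinear growth makes $e^{-(t-\xi)}|\omega_0(\xi)|$ genuinely integrable and its tail small in $M$, uniformly over $t\in[\tau,\tau+T]$; the role of $\Omega_N$ is then only to supply a common Polish-space framework, not the quantitative tail bound. The remaining subtlety is that sublinear growth is not uniform in $k$, so one cannot simply dominate $|\omega_k(\xi)|$ on $(-\infty,-M]$; the order of choosing $M$ (using $\omega_0$) and then $k$ (using local uniform convergence on $[-M,\tau+T]$) must be argued with care to get a bound uniform in $k$. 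The paper itself does not carry this out and defers entirely to \cite{CLL} (Corollary~22) and \cite{YR} (Lemma~2.5), so there is no in-paper proof to compare with; but your write-up as it stands has a genuine gap at exactly the point you flagged as ``the main obstacle''.
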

\begin{proof}
	See Corollary 22 and Lemma 2.5 in \cite{CLL} and \cite{YR}, respectively.
\end{proof}

	Now, define $z(t,\omega):=e^{-\sigma y(\theta_{t}\omega)}$ and a new function $\v$  by 
	\begin{align}\label{COV}
		\v(t;\tau,\omega,\v_{\tau})=\z(t,\omega)\u(t;\tau,\omega,\u_{\tau}) \ \ \
		\text{ with }
		\ \ \	\v_{\tau}=\z(\tau,\omega)\u_{\tau},
	\end{align}
where $\u(t;\tau,\omega,\u_{\tau})$ and $y(\theta_{t}\omega)$ are the solutions of \eqref{SNSE} (in the sense of Definition \ref{GWS}) and \eqref{OU2}, respectively. Then $\v(\cdot;\tau,\omega,\v_{\tau})$ satisfies the following:
\begin{equation}\label{CNSE}
		\left\{
		\begin{aligned}
			\frac{\d\v}{\d t}+\nu \A\v+\left[\frac{\sigma^2}{2}-\sigma y(\theta_{t}\omega)\right]\v+\z^{-1}(t,\omega)\B\big(\v\big)&=\z(t,\omega)\mathcal{P}\f , \quad t> \tau, \\ 
			\v|_{t=\tau}&=\v_{\tau}, \qquad \quad x\in \R^2,
		\end{aligned}
		\right.
	\end{equation}
	in $\V^*$. An application of the standard Faedo-Galerkin approximation method ensures that for all $t\geq \tau,\ \tau\in\R,$ for every $\v_{\tau}\in\H$ and $\f\in\mathrm{L}^{2}_{\text{loc}}(\R;\H^{-1}(\R^2))$, \eqref{CNSE} has a unique weak solution $\v\in\mathrm{C}([\tau,+\infty);\H)\cap\mathrm{L}^2_{\mathrm{loc}}(\tau,+\infty;\V)$ (cf. \cite{R.Temam}). Hence, for every $\u_{\tau}\in\H$ and $\f\in\mathrm{L}^{2}_{\text{loc}}(\R;\H^{-1}(\R^2))$, there exists a unique solution $\u(\cdot)$ to the system \eqref{SNSE} in the sense of Definition \ref{GWS}. Furthermore, if $\f\in\mathrm{L}^{2}_{\text{loc}}(\R;\L^2(\R^2))$,  then one can show that $\v\in \mathrm{C}((\tau,+\infty);\V)$ (cf. \cite[Corollary 4]{Robinson}, see Lemma \ref{lem3.6} below). For $t\in\R^+, \tau\in\R, \omega\in\Omega$ and $\u_{\tau}\in\H$, define a map $\Phi:\R^+\times\R\times\Omega\times\H\to\H$  by 
	\begin{align}\label{Phi1}
		\Phi(t,\tau,\omega,\u_{\tau}) =\u(t+\tau;\tau,\theta_{-\tau}\omega,\u_{\tau})=\frac{\v(t+\tau;\tau,\theta_{-\tau}\omega,\v_{\tau})}{\z(t+\tau,\theta_{-\tau}\omega)},
	\end{align}
	with $\v_{\tau}=\z(\tau,\theta_{-\tau}\omega)\u_{\tau}$. Note that $\Phi$ is a random cocycle (cf. \cite{PeriodicWang}). Assume that $D=\{D(\tau,\omega):\tau\in\R,\omega\in\Omega\}$ is a family of non-empty subsets of $\H$ satisfying, for every $c>0, \tau\in\R$ and $\omega\in\Omega$, 
	\begin{align}\label{D_1}
		\lim_{t\to\infty}e^{-ct}\|D(\tau-t,\theta_{-t}\omega)\|^2_{\H}=0.
	\end{align}
	Let us define the \emph{universe} of tempered subsets of $\H$ as $$\mathfrak{D}:=\{D=\{D(\tau,\omega):\tau\in\R\text{ and }\omega\in\Omega\}:D \text{ satisfying } \eqref{D_1}\}.$$

\begin{remark}
	One can use an another transformation  to change the system \eqref{SNSE} into an equivalent pathwise deterministic  system.  For a given $t\in\R$ and $\omega\in \Omega$, let $\hat{z}(t,\omega)=e^{-\sigma\omega(t)}$. Then, $\hat{z}$ satisfies the equation 
	\begin{align}\label{Trans1}
		\d \hat{z}=\frac{\sigma^2}{2} \hat{z} \d t-\sigma \hat{z}\d \W.
	\end{align}
	Let $\v$ be a new function given by 
	\begin{align}\label{COV2}
		\v(t;\tau,\omega,\v_{\tau})= \hat{z}(t,\omega)\u(t;\tau,\omega,\u_{\tau}) \ \ \
		\text{ with }
		\ \ \	\v_{\tau}= \hat{z}(\tau,\omega)\u_{\tau},
	\end{align}
	where $\u(t;\tau,\omega,\u_{\tau})$ and $ \hat{z}(t,\omega)$ are the solutions of \eqref{SNSE} and \eqref{Trans1}, respectively. Then $\v(\cdot;\tau,\omega,\v_{\tau})$ satisfies the following system:
	\begin{equation}\label{CNSE1}
		\left\{
		\begin{aligned}
			\frac{\d\v}{\d t}+\nu \A\v+\frac{\sigma^2}{2}\v+\frac{1}{\hat{z}(t,\omega)}\B\big(\v\big)&= \hat{z}(t,\omega)\f , \quad t> \tau, \\ 
			\v|_{t=\tau}&=\v_{\tau}, \qquad \quad x\in \R^2,
		\end{aligned}
		\right.
	\end{equation}
	which is a pathwise deterministic system and is equivalent to the system \eqref{SNSE}.  Even though  the system \eqref{CNSE1} is easier to handle than  the system \eqref{CNSE}, there is a technical difficulty in using the transformation  \eqref{COV2}. Since $\omega(t)$ is identified with $\mathrm{W}(t,\omega)$, the change of variable given by \eqref{COV2} involves the Wiener process explicitly. Therefore, $\hat{z}(t,\omega)$ is not a stationary process and the change of variable given by \eqref{COV2} is not stationary. This fact creates troubles if one uses the change of variable given by \eqref{COV2} in order to have conjugated random dynamical systems. Since $z(t,\omega)=e^{-\sigma y(\theta_{t}\omega)}$ is stationary, the change of variable is given by a homeomorphism (also called conjugation) which transforms one random dynamical system in an another equivalent one (cf. \cite[Proposition B.9]{GLS2}). For this reason, it is appropriate to use the change of variable mentioned in \eqref{COV}.  We also point out here that the authors in \cite{HV} used the transformation given \eqref{Trans1} to study the global (or local) existence of smooth pathwise solutions 2D (or 3D) Euler equations. But they are not investigating  any qualitative properties of the solutions (cf. \cite{MRRZXZ} also). 
\end{remark}

	\section{Bi-spatial Pullback Random Attractor for SNSE}\label{sec3}\setcounter{equation}{0}
	In this section, we establish  the existence of unique $(\H,\V)$-pullback random attractors (belonging to class $\mathfrak{D}$) for non-autonomous SNSE on the whole space $\R^2$. In order to prove the existence of unique bi-spatial pullback random attractors, we use an abstract result established in the work \cite{WZhao}. 
	For the basic definition of the terms that are used in the following theorem, the readers are referred to see \cite[Section 2]{WZhao}.

	\begin{theorem}[Theorem 2.10, \cite{WZhao}]\label{AbstractResult}
		Let $(\mathbb{X},\d_{\mathbb{X}})$ and $(\mathbb{Y},\d_{\mathbb{Y}})$ be two complete metric spaces. Let $\Phi$ be a random cocycle on $\X$ (over metric DS $(\Omega,\mathscr{F},\mathbb{P},\{\theta_{t}\}_{t\in\R})$) which is $(\X,\mathbb{Y})$-continuous and $\mathfrak{D}$ be inclusion closed universe in $\X$. Suppose that 
		\begin{itemize}
			\item[(i)] $\Phi$ has a closed pullback $\mathfrak{D}$-random absorbing set $\mathcal{K}=\{\mathcal{K}(\tau,\omega): \tau\in\R \text{ and }\omega\in \Omega\}\in\mathfrak{D}$ in $\X$;
			\item [(ii)] $\Phi$ is $(\X,\X)$-pullback asymptotically compact in $\X$;
			\item[(iii)] For every fixed $t>0$, $\tau\in\R$ and $\x\in\X$, the mapping $\Phi(t,\tau,\cdot,\x):\Omega\to\mathbb{Y}$ is $(\mathscr{F},\mathscr{B}(\mathbb{Y}))$-measurable.
		\end{itemize}
	Then the random cocycle $\Phi$ possesses a unique $(\X,\mathbb{Y})$-pullback random attractor $$\mathscr{A}=\{\mathscr{A}(\tau,\omega): \tau\in\R \text{ and }\omega\in \Omega\}\in\mathfrak{D},$$ where
	\begin{align*}
	\mathscr{A}(\tau,\omega)=\bigcap_{s>0}\overline{\bigcup_{t\geq s}\Phi(t,\tau-t,\theta_{-t}\omega,\mathcal{K}(\tau-t,\theta_{-t}\omega))}^{\X}.
	\end{align*}
Moreover, it can also be structured by the $\mathbb{Y}$-metric, that is,
	\begin{align*}
	\mathscr{A}(\tau,\omega)=\bigcap_{s>0}\overline{\bigcup_{t\geq s}\Phi(t,\tau-t,\theta_{-t}\omega,\mathcal{K}(\tau-t,\theta_{-t}\omega))}^{\mathbb{Y}}.
\end{align*}
	\end{theorem}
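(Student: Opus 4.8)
The plan is to construct the attractor as the pullback $\Omega$-limit set of the given absorbing family and then to read off the refined $\mathbb{Y}$-properties from the $(\X,\mathbb{Y})$-continuity of $\Phi$. I would take
\begin{align*}
\mathscr{A}(\tau,\omega):=\bigcap_{s>0}\overline{\bigcup_{t\geq s}\Phi(t,\tau-t,\theta_{-t}\omega,\mathcal{K}(\tau-t,\theta_{-t}\omega))}^{\X}
\end{align*}
as the candidate and first establish, working entirely in $\X$, that this family is non-empty, $\X$-compact, belongs to $\mathfrak{D}$, is $\Phi$-invariant, and $(\X,\X)$-pullback attracts every $D\in\mathfrak{D}$. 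This is the classical Crauel--Flandoli construction (cf.\ \cite{CF}): hypothesis (i) supplies absorption into $\mathcal{K}$, hypothesis (ii) supplies the precompactness needed to run the subsequence/diagonal extraction that makes the $\Omega$-limit set non-empty and compact, and invariance together with pullback attraction follow from the cocycle identity in the standard way. I would simply quote these facts from \cite[Section 2]{WZhao} rather than reprove them.

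The bi-spatial upgrade is the substantive part. For \emph{$\mathbb{Y}$-compactness of $\mathscr{A}(\tau,\omega)$}, I would use invariance to write each of its points as $\Phi(1,\tau-1,\theta_{-1}\omega,a)$ with $a$ ranging over the $\X$-compact set $\mathscr{A}(\tau-1,\theta_{-1}\omega)$; given any sequence in $\mathscr{A}(\tau,\omega)$ I pass to an $\X$-convergent subsequence $a_n\to a\in\mathscr{A}(\tau-1,\theta_{-1}\omega)$ of the pre-images and then invoke $(\X,\mathbb{Y})$-continuity of $\Phi(1,\tau-1,\theta_{-1}\omega,\cdot)$ to conclude that the images converge in $\mathbb{Y}$ to $\Phi(1,\tau-1,\theta_{-1}\omega,a)$, which lies in $\mathscr{A}(\tau,\omega)$ by invariance; hence $\mathscr{A}(\tau,\omega)$ is (sequentially) $\mathbb{Y}$-compact. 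For \emph{$\mathbb{Y}$-pullback attraction}, I argue by contradiction: if it fails there are $D\in\mathfrak{D}$, $t_n\to\infty$, $x_n\in D(\tau-t_n,\theta_{-t_n}\omega)$ and $\delta>0$ with
\begin{align*}
\mathrm{dist}_{\mathbb{Y}}\big(\Phi(t_n,\tau-t_n,\theta_{-t_n}\omega,x_n),\,\mathscr{A}(\tau,\omega)\big)\geq\delta;
\end{align*}
splitting off one unit of time by the cocycle property, $\Phi(t_n,\tau-t_n,\theta_{-t_n}\omega,x_n)=\Phi(1,\tau-1,\theta_{-1}\omega,y_n)$ with $y_n:=\Phi(t_n-1,\tau-t_n,\theta_{-t_n}\omega,x_n)$; by the $(\X,\X)$-asymptotic compactness a subsequence of $\{y_n\}$ converges in $\X$ to some $y$, and by the $(\X,\X)$-attraction already established (applied at the shifted base point $(\tau-1,\theta_{-1}\omega)$), $y\in\mathscr{A}(\tau-1,\theta_{-1}\omega)$; then $(\X,\mathbb{Y})$-continuity yields $\Phi(1,\tau-1,\theta_{-1}\omega,y_n)\to\Phi(1,\tau-1,\theta_{-1}\omega,y)\in\mathscr{A}(\tau,\omega)$ in $\mathbb{Y}$, contradicting $\delta>0$.

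It then remains to record measurability, uniqueness, and the alternative $\mathbb{Y}$-representation. Measurability of $\omega\mapsto\mathscr{A}(\tau,\omega)$ as a random closed set in $\mathbb{Y}$ follows from hypothesis (iii) via the explicit countable-intersection-of-countable-unions form of the $\Omega$-limit set together with a measurable-projection/selection argument, again as in \cite{WZhao}. Uniqueness is automatic: two $(\X,\mathbb{Y})$-pullback random attractors in $\mathfrak{D}$ must attract each other in $\X$, and being $\X$-compact and invariant they coincide. For the $\mathbb{Y}$-structured formula, the point is that the $\mathbb{Y}$-precompactness of the tails $\bigcup_{t\geq s}\Phi(t,\tau-t,\theta_{-t}\omega,\mathcal{K}(\tau-t,\theta_{-t}\omega))$ --- a by-product of the attraction argument --- makes their $\X$- and $\mathbb{Y}$-closures agree as $s\to\infty$, so replacing the $\X$-closure by the $\mathbb{Y}$-closure in the definition of $\mathscr{A}(\tau,\omega)$ gives the same set.

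The step I expect to be the genuine obstacle is obtaining attraction in the $\mathbb{Y}$-topology while only $(\X,\X)$-asymptotic compactness is available: no compactness of the cocycle is at hand in $\mathbb{Y}$, so the whole regularity gain has to be squeezed out of a single application of $(\X,\mathbb{Y})$-continuity on the final unit-time step, and one must be careful that the ``limit lands in the attractor'' input is genuinely available at the shifted fibre $\mathscr{A}(\tau-1,\theta_{-1}\omega)$. A closely related technical nuisance is checking that the random set $\mathscr{A}$, now regarded as $\mathbb{Y}$-valued, is still $\mathfrak{D}$-tempered and $\mathscr{F}$-measurable in the finer topology; this is precisely where hypothesis (iii) is indispensable, and it is also where one has to be most careful about the interplay between the two metrics.
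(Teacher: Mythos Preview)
This theorem is not proved in the paper: it is stated as \emph{Theorem 2.10 of \cite{WZhao}} and quoted as an abstract tool, with no proof given. The paper's contribution is to \emph{apply} it to 2D SNSE by verifying its hypotheses (Lemmas \ref{Continuity}, \ref{PA1}, \ref{PA2}, and Proposition \ref{LusinC}), not to reprove the abstract bi-spatial attractor theory. So there is no ``paper's own proof'' to compare your proposal against.

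That said, your sketch is a reasonable outline of how the result in \cite{WZhao} is actually established there: the $\X$-attractor is built by the standard Crauel--Flandoli $\Omega$-limit construction, and the $\mathbb{Y}$-compactness and $\mathbb{Y}$-attraction are then extracted by the ``split off one unit of time and apply $(\X,\mathbb{Y})$-continuity on the last step'' trick, exactly as you describe. Your identification of the delicate points --- that the limit of the pre-images must genuinely lie in $\mathscr{A}(\tau-1,\theta_{-1}\omega)$, and that $\mathbb{Y}$-measurability needs hypothesis (iii) --- is accurate. If you were writing this up in full you would want to be more explicit about why the $\X$- and $\mathbb{Y}$-closures of the tails eventually coincide (this requires the $\mathbb{Y}$-precompactness you allude to, not merely attraction), but as a roadmap your proposal is sound.
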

	
	The following assumption on the external forcing term $\f$ is needed to prove the results of this section. 
	
	\begin{hypothesis}\label{Hyp-f}
		For the external forcing term $\f\in\mathrm{L}^{2}_{\emph{loc}}(\R;\L^2(\R^2))$, there exists a number $\delta\in[0,\frac{\sigma^2}{2})$ such that for every $c>0$,
		\begin{align}\label{forcing2}
			\lim_{s\to-\infty}e^{cs}\int_{-\infty}^{0} e^{\delta\zeta}\|\f(\cdot,\zeta+s)\|^2_{\L^2(\R^2)}\d \zeta=0.
		\end{align}
	\end{hypothesis}
	A direct consequence of  Hypothesis \ref{Hyp-f} is as follows:
	\begin{proposition}[Proposition 4.2, \cite{KM6}]\label{Hypo_Conseq}
		Assume that Hypothesis \ref{Hyp-f} holds. Then
		\begin{align}\label{forcing1}
			\int_{-\infty}^{\tau} e^{\delta\zeta}\|\f(\cdot,\zeta)\|^2_{\L^2(\R^2)}\d \zeta<\infty, \ \ \text{ for all } \tau\in\R,
		\end{align}
		where $\delta$ is the same as in \eqref{forcing2}.
	\end{proposition}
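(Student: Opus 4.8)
The plan is to derive \eqref{forcing1} from Hypothesis \ref{Hyp-f} by a dyadic decomposition of the integration variable and a geometric series argument, which is the standard way to pass from a ``uniform decay after translation'' condition to a plain finiteness statement. Fix $\tau\in\R$ and fix some convenient constant, say $c=1$, in \eqref{forcing2}; by that hypothesis there is $s_0<0$ (depending on $\tau$) such that
\begin{align*}
	e^{s}\int_{-\infty}^{0}e^{\delta\zeta}\|\f(\cdot,\zeta+s)\|^2_{\L^2(\R^2)}\d\zeta\leq 1\qquad\text{for all }s\leq s_0,
\end{align*}
and without loss of generality we may take $s_0\leq\tau$ after shrinking it further.

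First I would split the half-line of integration at $s_0$, writing
\begin{align*}
	\int_{-\infty}^{\tau}e^{\delta\zeta}\|\f(\cdot,\zeta)\|^2_{\L^2(\R^2)}\d\zeta
	=\int_{s_0}^{\tau}e^{\delta\zeta}\|\f(\cdot,\zeta)\|^2_{\L^2(\R^2)}\d\zeta
	+\int_{-\infty}^{s_0}e^{\delta\zeta}\|\f(\cdot,\zeta)\|^2_{\L^2(\R^2)}\d\zeta.
\end{align*}
The first term is finite because $\f\in\mathrm{L}^{2}_{\mathrm{loc}}(\R;\L^2(\R^2))$ and the integrand is bounded by $e^{\delta\tau}\|\f(\cdot,\zeta)\|^2_{\L^2(\R^2)}$ on the compact interval $[s_0,\tau]$. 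For the second term I would decompose $(-\infty,s_0)=\bigcup_{n\geq 0}[s_0-n-1,s_0-n)$ and, on the $n$-th piece, substitute $\zeta=\eta+(s_0-n-1)$ with $\eta\in[0,1)$ and apply the displayed bound with $s=s_0-n-1\leq s_0$: this gives
\begin{align*}
	\int_{s_0-n-1}^{s_0-n}e^{\delta\zeta}\|\f(\cdot,\zeta)\|^2_{\L^2(\R^2)}\d\zeta
	=e^{\,s_0-n-1}\!\int_{0}^{1}e^{\delta\eta}\|\f(\cdot,\eta+s_0-n-1)\|^2_{\L^2(\R^2)}\,e^{\,n+1-s_0}\d\eta
	\leq e^{\,n+1-s_0}\cdot e^{-(s_0-n-1)}\,?
\end{align*}
— rather than push the arithmetic here, the point is simply that each slab is controlled by $e^{-(s_0-n-1)}=e^{\,n+1-s_0}$ times the quantity appearing in \eqref{forcing2}, which is $\leq 1$; but one must instead use the decay factor the \emph{right} way, extending the inner integral from $[0,1)$ to $(-\infty,0]$ (legitimate since the integrand is nonnegative) so that the $n$-th slab is bounded by $e^{s_0-n-1}$ after absorbing $e^{\delta\eta}\leq 1$. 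Summing the geometric series $\sum_{n\geq0}e^{s_0-n-1}=\frac{e^{s_0-1}}{1-e^{-1}}<\infty$ then yields the claim.

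The only genuine subtlety — and the step I would be most careful about — is bookkeeping the exponential weights so that the translated integrals genuinely match the form in \eqref{forcing2}: one wants, on the slab $\zeta\in[s_0-n-1,s_0-n)$, to recognize $e^{\delta\zeta}\|\f(\cdot,\zeta)\|^2_{\L^2(\R^2)}=e^{\delta(s_0-n-1)}\,e^{\delta(\zeta-s_0+n+1)}\|\f(\cdot,(\zeta-s_0+n+1)+(s_0-n-1))\|^2_{\L^2(\R^2)}$ and then integrate the shifted variable over $[0,1)\subset(-\infty,0]$, which is where nonnegativity of the integrand is used; the factor $e^{\delta(s_0-n-1)}$ that comes out front, combined with the $e^{-(s_0-n-1)}$ produced by \eqref{forcing2} with $c=1$, leaves $e^{(\delta-1)(s_0-n-1)}$, and since $\delta<\tfrac{\sigma^2}{2}$ is in particular finite this is summable in $n$ once $c$ is chosen larger than $\delta$ (e.g. $c=\delta+1$) rather than $c=1$. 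With that choice of $c$ the geometric series converges and finiteness follows; since $\tau\in\R$ was arbitrary, this proves \eqref{forcing1}.
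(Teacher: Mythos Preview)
The paper does not give its own proof of this proposition; it simply cites \cite[Proposition~4.2]{KM6}. So there is nothing to compare your argument against in the present paper. What remains is to check your proposal on its own merits, and here there are genuine problems.

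Your dyadic decomposition is unnecessary, and as written it does not work. The key observation you are missing is that the integral appearing in Hypothesis~\ref{Hyp-f} \emph{is} the integral you want, after a single change of variables: setting $\zeta'=\zeta+s$ gives
\[
\int_{-\infty}^{0}e^{\delta\zeta}\|\f(\cdot,\zeta+s)\|^2_{\L^2(\R^2)}\,\d\zeta
=e^{-\delta s}\int_{-\infty}^{s}e^{\delta\zeta'}\|\f(\cdot,\zeta')\|^2_{\L^2(\R^2)}\,\d\zeta'.
\]
Since the limit in \eqref{forcing2} is zero, the left-hand side is finite for some (indeed all sufficiently negative) $s_0$; hence $\int_{-\infty}^{s_0}e^{\delta\zeta}\|\f(\cdot,\zeta)\|^2_{\L^2(\R^2)}\,\d\zeta<\infty$ directly. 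For arbitrary $\tau\in\R$ one then splits at $\min\{\tau,s_0\}$ and uses $\f\in\mathrm{L}^2_{\mathrm{loc}}(\R;\L^2(\R^2))$ for the remaining compact piece. No slabs, no geometric series.

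Beyond being overcomplicated, your argument contains two concrete errors. First, the shift you choose ($s=s_0-n-1$) sends the slab variable to $[0,1)$, and you then write ``$[0,1)\subset(-\infty,0]$'' to compare with the hypothesis integral; this inclusion is false, so the comparison is illegitimate. The correct shift is $s=s_0-n$, which maps the slab to $[-1,0)\subset(-\infty,0]$. Second, once you fix the shift, the $n$-th slab is bounded by $e^{(\delta-c)(s_0-n)}$, and since $s_0-n\to-\infty$ you need $\delta-c>0$, i.e.\ $c<\delta$, for a convergent geometric series --- exactly the opposite of your claim that ``$c$ chosen larger than $\delta$'' works (with $c=\delta+1$ the terms blow up like $e^{n}$). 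Moreover, this repaired argument still fails when $\delta=0$, which the hypothesis explicitly allows. The one-line change-of-variables proof above has none of these issues.
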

	\begin{example}
		Take $\f(\cdot,t)=t^{p}\f_1$, for any $p\geq0$ and $\f_1\in\L^2(\R^2)$. Note that the conditions \eqref{forcing2}-\eqref{forcing1} do not need $\f$ to be bounded in $\L^2(\R^2)$ at $\pm\infty$.
	\end{example}
\begin{remark}\label{rem3.13}
	One can prove the existence of unique pullback random attractors in $\H$ by replacing $\L^2(\R^2)$ with $\H^{-1}(\R^2)$ in Hypothesis \ref{Hyp-f} (cf. \cite{PeriodicWang}). But, in order to prove the existence of pullback random attractors in a more regular space $\V$, one has to consider Hypothesis \ref{Hyp-f}.
\end{remark}
	The following lemma helps us to prove the $(\H,\V)$-continuity of random cocycle $\Phi$ as well as the $(\mathscr{F},\mathscr{B}(\V))$-measurability of $\Phi(t,\tau,\cdot,\x):\Omega\to\mathbb{V}$, for every fixed $t>0$, $\tau\in\R$ and $\x\in\H$.
	\begin{lemma}\label{lem3.6}
			For $\v_{\tau}\in\H$ and $\f\in\mathrm{L}^2_{\mathrm{loc}}(\R;\L^2(\R^2))$, there exist three random variables $\rho(t,\tau,\omega,\v_{\tau},\f)$, $\widetilde{\rho}(t,\tau,\omega,\v_{\tau},\f)$ and $\widehat{\rho}(t,\tau,\omega,\v_{\tau},\f)$ such that 
			\begin{align}
				\|\v(t)\|^2_{\H}+\frac{\sigma^2}{2}\int_{\tau}^{t}\|\v(\zeta)\|^2_{\H}\d\zeta+2\nu\int_{\tau}^{t}\|\nabla\v(\zeta)\|^2_{\H}\d\zeta\leq\rho(t,\tau,\omega,\v_{\tau},\f),\ \text{ for all } \ t\geq\tau,
			\end{align}
		and
			\begin{eqnarray}\label{ei8}
				\begin{aligned}
		\left\{	\begin{array}{l}\|\nabla\v(t)\|_{\H}^2\leq \widetilde{\rho}(t,\tau,\omega,\v_{\tau},\f) \ \text{ and } \ \\	\int\limits_{\frac{\tau+t}{2}}^{t}\|\A\v(\zeta)\|^2_{\H}\d\zeta \leq \widehat{\rho}(t,\tau,\omega,\v_{\tau},\f), \ \text{ for all } \ t>\tau.\end{array}\right.
		\end{aligned}
		\end{eqnarray}
	\end{lemma}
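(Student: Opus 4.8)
The plan is to perform successive energy estimates on the equivalent pathwise deterministic system \eqref{CNSE}, using the cancellation property $b(\v,\v,\v)=0$ from \eqref{b0}, the bilinear estimate \eqref{b1} from Remark \ref{TriEsti}, and the fact (recorded in \eqref{Z3}) that $y(\theta_t\omega)$ is continuous in $t$ so that all $\omega$-dependent coefficients below are locally bounded in time.

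First I would establish the $\H$-estimate. Take the $\H$-inner product of \eqref{CNSE} with $\v$. The nonlinear term $\z^{-1}(t,\omega)\langle\B(\v),\v\rangle$ vanishes by \eqref{b0}, and one is left with
\begin{align*}
	\frac{1}{2}\frac{\d}{\d t}\|\v\|_{\H}^2+\nu\|\nabla\v\|_{\H}^2+\left[\frac{\sigma^2}{2}-\sigma y(\theta_t\omega)\right]\|\v\|_{\H}^2=\z(t,\omega)(\mathcal{P}\f,\v).
\end{align*}
Bounding the right-hand side by $\frac{\nu}{2}\|\nabla\v\|_{\H}^2+C\z^2(t,\omega)\|\f\|_{\L^2(\R^2)}^2$ (using the equivalence $\|\v\|_{\H}\leq C\|\nabla\v\|_{\H}$ only if needed, or simply Young's inequality after controlling $\|\f\|_{\H^{-1}}$ by $\|\f\|_{\L^2}$) and absorbing, a Gronwall argument on $[\tau,t]$ with the (locally bounded) exponential factor $\exp\{\int \sigma y(\theta_\xi\omega)\,\d\xi\}$ yields the first inequality, defining $\rho$ explicitly in terms of $\|\v_\tau\|_{\H}^2$, $\int_\tau^t \z^2(\zeta,\omega)\|\f(\zeta)\|_{\L^2(\R^2)}^2\d\zeta$ and $\sup_{\zeta\in[\tau,t]}|y(\theta_\zeta\omega)|$; the two extra terms on the left are retained rather than absorbed.

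Next I would obtain the $\V$-estimate. Take the $\H$-inner product of \eqref{CNSE} with $\A\v$, giving
\begin{align*}
	\frac{1}{2}\frac{\d}{\d t}\|\nabla\v\|_{\H}^2+\nu\|\A\v\|_{\H}^2+\left[\frac{\sigma^2}{2}-\sigma y(\theta_t\omega)\right]\|\nabla\v\|_{\H}^2=-\z^{-1}(t,\omega)\langle\B(\v),\A\v\rangle+\z(t,\omega)(\mathcal{P}\f,\A\v).
\end{align*}
The crucial term is $\langle\B(\v),\A\v\rangle$, i.e. the integral $\int_{\R^2}(\v\cdot\nabla)\v\cdot\Delta\v\,\d x$ flagged in the Difficulties subsection; here I invoke \eqref{b1} with $\v=\u=\w$ (taking $\w=\A\v$ in the last slot, so the bound carries a factor $\|\A\v\|_{\H}$), which after Young's inequality absorbs $\tfrac{\nu}{2}\|\A\v\|_{\H}^2$ into the left and leaves a term of the form $C\z^{-2}\|\v\|_{\H}^2\|\nabla\v\|_{\H}^2\cdot(\text{lower order})$ plus genuinely lower-order pieces. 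Because the coefficient multiplying $\|\nabla\v\|_{\H}^2$ is integrable in time (by the already-proved $\H$-bound, $\int_\tau^t\|\nabla\v\|_{\H}^2\d\zeta\leq\rho$), the uniform Gronwall lemma applies: integrating the differential inequality over $[\frac{\tau+t}{2},t]$ against the $\H$-estimate controls $\|\nabla\v(t)\|_{\H}^2$ for every $t>\tau$, which gives $\widetilde\rho$, and then re-integrating the same inequality once more over $[\frac{\tau+t}{2},t]$ and dropping the now-controlled boundary term produces the bound on $\int_{\frac{\tau+t}{2}}^t\|\A\v(\zeta)\|_{\H}^2\d\zeta$, i.e. $\widehat\rho$. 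Here the parabolic smoothing from $\H$ at time $\tau$ to $\V$ at any later time is exactly the $\mathrm{C}((\tau,\infty);\V)$-regularity cited from \cite{Robinson}.

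The main obstacle is the treatment of $\int_{\R^2}(\v\cdot\nabla)\v\cdot\Delta\v\,\d x$: unlike the periodic or Poincaré-domain settings, it is neither zero nor controllable by the elementary estimates of \cite{Temam1}, so one must use precisely the chain of Hölder–Ladyzhenskaya–Sobolev–interpolation inequalities assembled in \eqref{b1}, and then check that the resulting superlinear term $\sim\|\v\|_{\H}\|\nabla\v\|_{\H}\cdot(\|\nabla\v\|_{\H}^{1/2}\|\A\v\|_{\H}^{1/2}+\dots)\cdot\|\A\v\|_{\H}$ can, after Young's inequality, have its highest $\|\A\v\|_{\H}$-power absorbed by the viscous term while the leftover factor is only $\|\nabla\v\|_{\H}^2$ times an $L^1_t$ weight — which is what makes the uniform Gronwall step legitimate. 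All the $\omega$-dependence enters only through the continuous, hence locally bounded, factors $\z^{\pm1}(t,\omega)$ and $y(\theta_t\omega)$, so measurability and the random-variable nature of $\rho,\widetilde\rho,\widehat\rho$ are automatic.
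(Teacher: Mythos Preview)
Your approach matches the paper's: $\H$-energy estimate then Gronwall for $\rho$; pairing with $\A\v$ and invoking \eqref{b1}, then the uniform Gronwall lemma for $\widetilde\rho$; and re-integration of the same differential inequality for $\widehat\rho$. One caveat worth flagging: on $\R^2$ there is \emph{no} Poincar\'e inequality $\|\v\|_{\H}\leq C\|\nabla\v\|_{\H}$, so the forcing term cannot be absorbed purely into $\tfrac{\nu}{2}\|\nabla\v\|_{\H}^2$ by either of your parenthetical routes --- the paper instead applies Young's inequality against the damping term $\tfrac{\sigma^2}{2}\|\v\|_{\H}^2$ coming from the It\^o correction, writing $\z(t,\omega)(\f,\v)\leq\tfrac{\sigma^2}{4}\|\v\|_{\H}^2+\tfrac{\z^2(t,\omega)}{\sigma^2}\|\f\|_{\L^2(\R^2)}^2$, which is exactly the mechanism (cf.\ Subsection~1.2) that makes the whole-space case go through; with that correction your argument is the paper's.
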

\begin{proof}
		From the first equation of the system \eqref{CNSE}, \eqref{b0} and Young's inequality, we obtain
	\begin{align}\label{ei1}
		\frac{\d}{\d t} \|\v(t)\|^2_{\H}+ \frac{\sigma^2}{2}\|\v(t)\|^2_{\H} +2\nu\|\nabla\v(t)\|^2_{\H} \leq \frac{2\z^2(t,\omega)}{\sigma^2}\|\f(t)\|^2_{\L^2(\R^2)} + 2 \sigma|y(\theta_{t}\omega)|\|\v(t)\|^2_{\H},
	\end{align}
for a.e. $t\geq\tau$, which gives (using Gronwall's inequality)
\begin{align}\label{ei22}
	\|\v(t)\|^2_{\H}&\leq\left[\|\v_{\tau}\|^2_{\H}+\frac{2}{\sigma^2}\int_{\tau}^{t} \z^2(\zeta,\omega) \|\f(\zeta)\|^2_{\L^2(\R^2)}\d \zeta\right] e^{\int_{\tau}^{t}2\sigma| y(\theta_{\zeta}\omega)|\d\zeta}, \ \text{ for all } \ t\geq \tau.
\end{align}
From \eqref{ei1} and \eqref{ei22}, we write
\begin{align}\label{ei2}
	&\|\v(t)\|^2_{\H}+\frac{\sigma^2}{2}\int_{\tau}^{t}\|\v(\zeta)\|^2_{\H}\d\zeta+2\nu\int_{\tau}^{t}\|\nabla\v(\zeta)\|^2_{\H}\d\zeta\nonumber\\&\leq \|\v_{\tau}\|^2_{\H}+\frac{2}{\sigma^2}\int_{\tau}^{t} \z^2(\zeta,\omega) \|\f(\zeta)\|^2_{\L^2(\R^2)}\d \zeta+\int_{\tau}^{t}2\sigma|y(\theta_{\zeta}\omega)|\|\v(\zeta)\|^2_{\H}\d\zeta\nonumber\\&:=\rho(t,\tau,\omega,\v_{\tau},\f),
\end{align}
for all $t\geq\tau$. Taking the inner product of the first equation in \eqref{CNSE} with $\A\v(\cdot)$, using \eqref{b1} and Young's inequality, we find
\begin{align}\label{ei3}
		&\frac{1}{2}\frac{\d}{\d t} \|\nabla\v(t)\|^2_{\H}+ \frac{\sigma^2}{2}\|\nabla\v(t)\|^2_{\H} +\nu\|\A\v(t)\|^2_{\H}\nonumber\\&=\z^{-1}(t,\omega)\ b(\v(t),\v(t),\A\v(t))+\z(t,\omega)(\f(t),\A\v(t))+\sigma y(\theta_{t}\omega)\|\nabla\v(t)\|^2_{\H}\nonumber\\&\leq\frac{\nu}{2}\|\A\v(t)\|^2_{\H}+\frac{\sigma^2}{4}\|\nabla\v(t)\|^2_{\H}+C\z^{-4}(t,\omega)\|\v(t)\|^{2}_{\H}\|\nabla\v(t)\|^4_{\H}+C\z^{-4}(t,\omega)\|\v(t)\|^{6}_{\H}\nonumber\\&\quad+C\z^{-4}(t,\omega)\|\v(t)\|^{4}_{\H}\|\nabla\v(t)\|^2_{\H}+C\z^{-2}(t,\omega)\|\v(t)\|^{2}_{\H}\|\nabla\v(t)\|^2_{\H}+\sigma|y(\theta_{t}\omega)|\|\nabla\v(t)\|^2_{\H}\nonumber\\&\quad+C\z^2(t,\omega)\|\f(t)\|^2_{\L^2(\R^2)},
\end{align}
for a.e. $t\geq\tau$. From \eqref{ei3}, we infer
\begin{align}\label{ei4}
	&\frac{\d}{\d t} \|\nabla\v(t)\|^2_{\H}\nonumber\\&\leq C\z^{-4}(t,\omega)\|\v(t)\|^{2}_{\H}\|\nabla\v(t)\|^4_{\H}+C\z^{-4}(t,\omega)\|\v(t)\|^{6}_{\H}+C\z^{-4}(t,\omega)\|\v(t)\|^{4}_{\H}\|\nabla\v(t)\|^2_{\H}\nonumber\\&\quad+C\z^{-2}(t,\omega)\|\v(t)\|^{2}_{\H}\|\nabla\v(t)\|^2_{\H}+2\sigma|y(\theta_{t}\omega)|\|\nabla\v(t)\|^2_{\H}+C\z^2(t,\omega)\|\f(t)\|^2_{\L^2(\R^2)},
\end{align}
for a.e. $t\geq\tau$. For $t>\tau$, using \eqref{ei2}, we have 
\begin{align}
	&\int_{\tau}^{t}\|\nabla\v(\zeta)\|^2_{\H}\d\zeta\leq\rho(t,\tau,\omega,\v_{\tau},\f),\label{ei5}\\ \nonumber
	&C\int_{\tau}^{t}\z^{-4}(t,\omega)\|\v(\zeta)\|^2_{\H}\|\nabla\v(\zeta)\|^2_{\H}\d\zeta+2\sigma\int_{\tau}^{t}|y(\theta_{\zeta}\omega)|\d\zeta\nonumber\\&\leq C\sup_{\zeta\in[\tau,t]}\left\{\z^{-4}(\zeta,\omega)\|\v(\zeta)\|^2_{\H}\right\}\int_{\tau}^{t}\|\nabla\v(\zeta)\|^2_{\H}\d\zeta+2\sigma\int_{\tau}^{t}|y(\theta_{\zeta}\omega)|\d\zeta\nonumber\\&\leq C\sup_{\zeta\in[\tau,t]}\left\{\z^{-4}(\zeta,\omega)\right\}[\rho(t,\tau,\omega,\v_{\tau},\f)]^2+2\sigma\int_{\tau}^{t}|y(\theta_{\zeta}\omega)|\d\zeta,\nonumber\\&:=\rho_1(t,\tau,\omega,\v_{\tau},\f),\label{ei6}
\end{align}
and
\begin{align}
	&C\int_{\tau}^{t}\z^{-4}(\zeta,\omega)\|\v(\zeta)\|^{6}_{\H}\d\zeta+C\int_{\tau}^{t}\z^{-4}(\zeta,\omega)\|\v(\zeta)\|^{4}_{\H}\|\nabla\v(\zeta)\|^2_{\H}\d\zeta\nonumber\\&\quad+C\int_{\tau}^{t}\z^{-2}(\zeta,\omega)\|\v(\zeta)\|^{2}_{\H}\|\nabla\v(\zeta)\|^2_{\H}\d\zeta+C\int_{\tau}^{t}\z^2(\zeta,\omega)\|\f(\zeta)\|^2_{\L^2(\R^2)}\d\zeta\nonumber\\&\leq C\sup\limits_{\zeta\in[\tau,t]}\z^{-4}(\zeta,\omega)[\rho(t,\tau,\omega,\v_{\tau},\f)]^3+C\sup\limits_{\zeta\in[\tau,t]}\z^{-2}(\zeta,\omega)[\rho(t,\tau,\omega,\v_{\tau},\f)]^2\nonumber\\&\quad+2\sigma\int_{\tau}^{t}|y(\theta_{\zeta}\omega)|\d\zeta+C\sup\limits_{\zeta\in[\tau,t]}\z^2(\zeta,\omega)\int_{\tau}^{t}\|\f(\zeta)\|^2_{\L^2(\R^2)}\d\zeta\nonumber\\&:=\rho_2(t,\tau,\omega,\v_{\tau},\f).\label{ei7}
\end{align}
In view of uniform Gronwall's lemma (cf. \cite[Lemma 1.1, pp. 91]{R.Temam}) along with \eqref{ei5}-\eqref{ei7}, we deduce 
\begin{align*}
	\|\nabla\v(t)\|_{\H}^2&\leq \bigg[\frac{\rho(t,\tau,\omega,\v_{\tau},\f)}{t-\tau}+\rho_2(t,\tau,\omega,\v_{\tau},\f)\bigg]\exp\{\rho_1(t,\tau,\omega,\v_{\tau},\f)\}\nonumber\\&:=\widetilde{\rho}(t,\tau,\omega,\v_{\tau},\f), \ \text{ for all }\  t>\tau.
\end{align*}
Moreover, \eqref{ei3} gives
\begin{align*}
	\int_{\frac{\tau+t}{2}}^{t}\|\A\v(\zeta)\|^2_{\H}\d\zeta& \leq \widetilde{\rho}\left(\frac{\tau+t}{2},\tau,\omega,\v_{\tau},\f\right)+C\{\rho(t,\tau,\omega,\v_{\tau},\f)\}^2\sup_{\zeta\in[\frac{\tau+t}{2},t]}\bigg[\frac{\widetilde{\rho}(\zeta,\tau,\omega,\v_{\tau},\f)}{\z^4(\zeta,\omega)}\bigg]\nonumber\\&\quad+2\sigma\sup_{\zeta\in[\frac{\tau+t}{2},t]}\left[\widetilde{\rho}(\zeta,\tau,\omega,\v_{\tau},\f)\right]\int_{\frac{\tau+t}{2}}^{t}|y(\theta_{\zeta}\omega)|\d\zeta+\rho_2(t,\tau,\omega,\v_{\tau},\f)\\&:= \widehat{\rho}(t,\tau,\omega,\v_{\tau},\f), \ \text{ for all }\  t>\tau,
\end{align*} 
which completes the proof.
\end{proof}

In order to apply the abstract result stated in Theorem \ref{AbstractResult}, $\Phi$ should be $(\H,\V)$-continuous. The following lemma shows that our random cocycle is $(\H,\V)$-continuous.
	\begin{lemma}\label{Continuity}
	Assume that $\f\in \mathrm{L}^2_{\emph{loc}}(\mathbb{R};\L^2(\R^2))$. Then, the solution of \eqref{CNSE} is continuous in $\V$ with respect to the  initial data in $\H.$
\end{lemma}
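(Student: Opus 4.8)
The plan is to prove that if $\v^n_\tau\to\v_\tau$ strongly in $\H$, then the corresponding solutions $\v^n(\cdot):=\v(\cdot;\tau,\omega,\v^n_\tau)$ and $\v(\cdot):=\v(\cdot;\tau,\omega,\v_\tau)$ of \eqref{CNSE} satisfy $\v^n(t)\to\v(t)$ in $\V$ for every $t>\tau$ (indeed uniformly on compact subsets of $(\tau,\infty)$). Fix $t_0>\tau$ and set $\tau'=\tfrac{\tau+t_0}{2}$. Since $\sup_n\|\v^n_\tau\|_\H<\infty$, Lemma \ref{lem3.6} supplies bounds, uniform in $n$, for $\v^n$ and $\v$ in $\mathrm{L}^\infty(\tau',t_0;\V)\cap\mathrm{L}^2(\tau',t_0;\D(\A))$ (and for $\v$ also in $\mathrm{L}^2(\tau,t_0;\V)$); moreover $t\mapsto y(\theta_t\omega)$ is continuous, so $\z^{\pm k}(\cdot,\omega)$ are bounded above and below on $[\tau,t_0]$. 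Writing $\w^n:=\v^n-\v$, it solves in $\V^*$
$$\frac{\d\w^n}{\d t}+\nu\A\w^n+\Big[\frac{\sigma^2}{2}-\sigma y(\theta_t\omega)\Big]\w^n+\z^{-1}(t,\omega)\big[\B(\v^n)-\B(\v)\big]=0,\qquad \w^n(\tau)=\v^n_\tau-\v_\tau.$$

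\emph{Step 1 ($\H$-convergence).} Pairing the difference equation with $\w^n$, using \eqref{441}, \eqref{b0}, Ladyzhenskaya's inequality (\cite[Lemma 1, Chapter I]{OAL}) and Young's inequality, one gets
$$\frac{\d}{\d t}\|\w^n(t)\|_\H^2+\nu\|\nabla\w^n(t)\|_\H^2\le\kappa(t)\|\w^n(t)\|_\H^2,\qquad \kappa(t):=2\sigma|y(\theta_t\omega)|+C\z^{-2}(t,\omega)\|\nabla\v(t)\|_\H^2,$$
with $\kappa\in\mathrm{L}^1(\tau,t_0)$ since $\v\in\mathrm{L}^2(\tau,t_0;\V)$. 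Gronwall's inequality gives $\sup_{t\in[\tau,t_0]}\|\w^n(t)\|_\H^2\le C(t_0,\omega)\|\w^n(\tau)\|_\H^2$, and, after integration, $\int_\tau^{t_0}\|\nabla\w^n(t)\|_\H^2\d t\le C(t_0,\omega)\|\w^n(\tau)\|_\H^2$; both tend to $0$ as $n\to\infty$.

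\emph{Step 2 ($\V$-convergence).} Pairing the difference equation with $\A\w^n$, writing $\B(\v^n)-\B(\v)=\B(\w^n,\v^n)+\B(\v,\w^n)$, and estimating the trilinear terms $\z^{-1}b(\w^n,\v^n,\A\w^n)$ and $\z^{-1}b(\v,\w^n,\A\w^n)$ by means of the two-dimensional inequality \eqref{b1} and Young's inequality — absorbing a fixed fraction of $\nu\|\A\w^n(t)\|_\H^2$ into the left side — yields a differential inequality
$$\frac{\d}{\d t}\|\nabla\w^n(t)\|_\H^2\le P(t)\|\nabla\w^n(t)\|_\H^2+Q(t)\|\w^n(t)\|_\H^2,$$
with $P,Q\ge0$ and $\int_{\tau'}^{t_0}\big(P(t)+Q(t)\big)\d t\le C(t_0,\omega)$ \emph{uniformly in} $n$ — here the $n$-uniform $\mathrm{L}^\infty(\V)$ and $\mathrm{L}^2(\D(\A))$ bounds from Lemma \ref{lem3.6} are precisely what make $P,Q$ integrable. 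By Step 1, $Q(t)\|\w^n(t)\|_\H^2\le\varepsilon_nQ(t)$ with $\varepsilon_n:=\sup_{[\tau,t_0]}\|\w^n\|_\H^2\to0$, and $\int_{\tau'}^{t_0}\|\nabla\w^n\|_\H^2\d t\to0$. The uniform Gronwall lemma (\cite[Lemma 1.1, pp. 91]{R.Temam}) on $[\tau',t_0]$ then forces $\|\nabla\w^n(t_0)\|_\H^2\to0$. Together with Step 1 this gives $\|\v^n(t_0)-\v(t_0)\|_\V\to0$; since $t_0>\tau$ is arbitrary, the claimed continuity holds (and translating the right endpoint upgrades it to uniform convergence on compact subsets of $(\tau,\infty)$).

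\emph{Main obstacle.} The delicate point is the $\V$-level control of the nonlinearity: estimating $\langle\B(\v^n)-\B(\v),\A\w^n\rangle$ through the sharp trilinear bound \eqref{b1} necessarily brings in $\|\A\v^n\|_\H$ and $\|\A\v\|_\H$, and one must verify that after Young's inequality the coefficient of $\|\nabla\w^n\|_\H^2$ lies in $\mathrm{L}^1(\tau',t_0)$ with an $n$-independent norm — which is possible only because the initial data are $\H$-bounded and Lemma \ref{lem3.6} promotes this to $n$-uniform $\mathrm{L}^2(\D(\A))$-control for $t$ bounded away from $\tau$. A second subtlety is that $\w^n(\tau)$ is small only in $\H$, not in $\V$, so the $\V$-estimate cannot be closed by ordinary Gronwall from $t=\tau$; the instantaneous regularization encoded in the uniform Gronwall lemma, fed by $\int_{\tau'}^{t_0}\|\nabla\w^n\|_\H^2\d t\to0$ from Step 1, is what resolves this.
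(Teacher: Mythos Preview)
Your proof is correct and follows essentially the same route as the paper: an $\H$-energy estimate on the difference (via \eqref{441} and Ladyzhenskaya) followed by a $\V$-level estimate obtained by testing with $\A\w^n$, splitting $\B(\v^n)-\B(\v)=\B(\w^n,\v^n)+\B(\v,\w^n)$, and controlling the trilinear terms through \eqref{b1}. The only cosmetic difference is that you invoke the uniform Gronwall lemma directly, whereas the paper carries out the equivalent argument by hand, multiplying the differential inequality by $\big(\zeta-\tfrac{\tau+t}{2}\big)$ and integrating by parts; in both cases the point is to bypass the lack of $\V$-smallness of the initial difference by exploiting the smoothing $\int_{\tau'}^{t_0}\|\nabla\w^n\|_\H^2\,\d t\to0$ from Step~1 together with the $n$-uniform $\mathrm{L}^2(\D(\A))$ bounds supplied by Lemma~\ref{lem3.6}.
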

\begin{proof}
	Let $\v_1(t):=\v_{1}(t;\tau,\omega,\v_{1,\tau})$ and $\v_{2}(t):=\v_{2}(t;\tau,\omega,\v_{2,\tau})$ be two solutions of the system \eqref{CNSE}. Then $\mathscr{U}(\cdot)=\v_{1}(\cdot)-\v_{2}(\cdot)$ with $\mathscr{U}(\tau)=\v_{1,\tau}-\v_{2,\tau}$ satisfies
	\begin{align}\label{Conti1}
		\frac{\d\mathscr{U}(t)}{\d t}+\nu \A\mathscr{U}(t)+\left[\frac{\sigma^2}{2}-\sigma y(\theta_{t}\omega)\right]	\mathscr{U}(t)=-\z^{-1}(t,\omega)\left\{\B\big(\v_1(t)\big)-\B\big(\v_2(t)\big)\right\},
	\end{align}
	for a.e. $t\geq\tau$ in $\V^*$. Multiplying \eqref{Conti1} with $\mathscr{U}(\cdot)$ and then integrating over $\R^2$, we infer
	\begin{align}\label{Conti2}
		&\frac{1}{2}\frac{\d}{\d t} \|\mathscr{U}(t)\|^2_{\H} +\nu \|\nabla\mathscr{U}(t)\|^2_{\H} + \frac{\sigma^2}{2}\|\mathscr{U}(t)\|^2_{\H} \nonumber\\&=-\z^{-1}(t,\omega)\left\langle\B\big(\v_1(t)\big)-\B\big(\v_2(t)\big), \mathscr{U}(t)\right\rangle+\sigma y(\theta_{t}\omega)\|\mathscr{U}(t)\|^2_{\H},
	\end{align}
	for a.e. $t\geq\tau$. Using \eqref{441}, H\"older's, Ladyzhenskaya's and Young's inequalities, we obtain
	\begin{align}\label{Conti3}
		\left| \z^{-1}(t,\omega)\left\langle\B\big(\v_1\big)-\B\big(\v_2\big), \mathscr{U}\right\rangle\right|&=\left|\z^{-1}(t,\omega)\left\langle\B\big(\mathscr{U},\mathscr{U} \big), \v_2\right\rangle\right|\nonumber\\&\leq\frac{\nu}{2}\|\nabla\mathscr{U}\|^2_{\H}+C \z^{-4}(t,\omega)\|\v_2\|^4_{\L^4}\|\mathscr{U}\|^2_{\H}\nonumber\\&\leq\frac{\nu}{2}\|\nabla\mathscr{U}\|^2_{\H}+C\z^{-4}(t,\omega)\|\v_2\|^2_{\H}\|\nabla\v_2\|^2_{\H}\|\mathscr{U}\|^2_{\H}.
	\end{align}
	Making use of \eqref{Conti3} in \eqref{Conti2}, we arrive at
	\begin{align}\label{Conti4}
		&	\frac{\d}{\d t} \|\mathscr{U}(t)\|^2_{\H} +\nu\|\nabla\mathscr{U}(t)\|^2_{\H} \leq \left[C\z^{-4}(t,\omega)\|\v_2(t)\|^2_{\H}\|\nabla\v_2(t)\|^2_{\H}+\sigma |y(\theta_{t}\omega)|\right]\|\mathscr{U}(t)\|^2_{\H},
	\end{align} for a.e. $t\geq\tau$. An application of Gronwall's inequality implies 
	\begin{align}\label{Conti5}
		\|\mathscr{U}(t)\|^2_{\H}&\leq \exp\left\{C\int_{\tau}^{t}\z^{-4}(\zeta,\omega)\|\v_2(\zeta)\|^2_{\H}\|\nabla\v_2(\zeta)\|^2_{\H}\d\zeta+\sigma \int_{\tau}^{t}|y(\theta_{\zeta}\omega)|\d\zeta\right\}\|\mathscr{U}(\tau)\|^2_{\H}\nonumber\\&\leq\exp\left\{C\sup_{\zeta\in[\tau,t]}\left[\z^{-4}(\zeta,\omega)\|\v_2(\zeta)\|^2_{\H}\right]\int_{\tau}^{t}\|\nabla\v_2(\zeta)\|^2_{\H}\d\zeta+\sigma \int_{\tau}^{t}|y(\theta_{\zeta}\omega)|\d\zeta\right\}\|\mathscr{U}(\tau)\|^2_{\H}\nonumber\\&\leq\exp\Bigg\{C\sup\limits_{\zeta\in[\tau,t]}\z^{-4}(\zeta,\omega)[\rho(t,\tau,\omega,\v_{2,\tau},\f)]^2+\sigma \int_{\tau}^{t}|y(\theta_{\zeta}\omega)|\d\zeta\Bigg\}\|\mathscr{U}(\tau)\|^2_{\H}\nonumber\\&:=\rho_5(t,\tau,\omega,\v_{2,\tau},\f)\|\mathscr{U}(\tau)\|^2_{\H},
	\end{align} 
	for all $t\geq\tau$. Furthermore, integrating \eqref{Conti4} over $[\tau,t]$ with $t>\tau$, we infer
	\begin{align}\label{Conti6}
		&\int_{\tau}^{t}\|\nabla\mathscr{U}(\zeta)\|^2_{\H}\d\zeta\nonumber\\ &\leq \|\mathscr{U}(\tau)\|^2_{\H} +C\int_{\tau}^{t} \left[\z^{-4}(\zeta,\omega)\|\v_2(\zeta)\|^2_{\H}\|\nabla\v_2(\zeta)\|^2_{\H}+|y(\theta_{\zeta}\omega)|\right]\|\mathscr{U}(\zeta)\|^2_{\H}\d\zeta\nonumber\\&\leq \Bigg[1+C\left\{\sup\limits_{\zeta\in[\tau,t]}\z^{-4}(\zeta,\omega)[\rho(t,\tau,\omega,\v_{2,\tau},\f)]^2+\int_{\tau}^{t}|y(\theta_{\zeta}\omega)\d\zeta\right\}\rho_5(t,\tau,\omega,\v_{2,\tau},\f)\Bigg]\|\mathscr{U}(\tau)\|^2_{\H}\nonumber\\&:=\rho_6(t,\tau,\omega,\v_{2,\tau},\f)\|\mathscr{U}(\tau)\|^2_{\H},
	\end{align}
	for all $t>\tau$. 
	
	Taking the inner product of \eqref{Conti1} with $\A\mathscr{U}(\cdot),$ and using \eqref{b1} and Young's inequality, we find
	\begin{align}\label{Conti7}
		&\frac{1}{2}\frac{\d}{\d t} \|\nabla\mathscr{U}(t)\|^2_{\H} +\nu \|\A\mathscr{U}(t)\|^2_{\H} + \frac{\sigma^2}{2}\|\nabla\mathscr{U}(t)\|^2_{\H} \nonumber\\&=-\z^{-1}(t,\omega)\left\langle\B\big(\v_1(t)\big)-\B\big(\v_2(t)\big),\A \mathscr{U}(t)\right\rangle+\sigma y(\theta_{t}\omega)\|\nabla\mathscr{U}(t)\|^2_{\H}\nonumber\\&=-\z^{-1}(t,\omega)\big\{b\big(\mathscr{U}(t),\v_1(t),\A \mathscr{U}(t)\big)+b\big(\v_2(t),\mathscr{U}(t),\A \mathscr{U}(t)\big)\big\}+\sigma y(\theta_{t}\omega)\|\nabla\mathscr{U}(t)\|^2_{\H} \nonumber\\&\leq \frac{\nu}{2} \|\A\mathscr{U}(t)\|^2_{\H} + C\big\{\|\v_1(t)\|^{2}_{\H}+\|\A\v_1(t)\|^{2}_{\H}+\|\nabla\v_1(t)\|^{2}_{\H}+\z^{-2}(t,\omega)\|\v_2(t)\|^2_{\H}\nonumber\\&\quad+\z^{-2}(t,\omega)\|\nabla\v_2(t)\|^2_{\H}+\z^{-4}(t,\omega)\|\v_2(t)\|^2_{\H}\|\nabla\v_2(t)\|^2_{\H}\big\}\|\mathscr{U}(t)\|^2_{\H}+C\z^{-4}(t,\omega)\big\{\|\v_1(t)\|^{2}_{\H}\nonumber\\&\quad+\|\nabla\v_1(t)\|^{2}_{\H}+\|\v_2(t)\|^2_{\H}\|\nabla\v_2(t)\|^2_{\H}+\sigma |y(\theta_{t}\omega)|\big\}\|\nabla\mathscr{U}(t)\|^{2}_{\H}+\sigma |y(\theta_{t}\omega)|\|\nabla\mathscr{U}(t)\|^{2}_{\H},
	\end{align}
	for a.e. $t\geq\tau$.
	Replacing $t$ by $\zeta$ in \eqref{Conti7} and multiplying by $(\zeta-\frac{\tau+t}{2})$ with $\zeta\in[\frac{\tau+t}{2},t]$, we obtain 
	
	\begin{align}\label{Conti8}
		&\bigg[\zeta-\frac{\tau+t}{2}\bigg]\frac{\d}{\d\zeta}\|\nabla\mathscr{U}(\zeta)\|^2_{\H}\nonumber\\& \leq \frac{C}{\z^4(\zeta,\omega)}\bigg[\zeta-\frac{\tau+t}{2}\bigg]\bigg[\|\v_1(\zeta)\|^{2}_{\H}+\|\nabla\v_1(\zeta)\|^{2}_{\H}+\|\v_2(\zeta)\|^2_{\H}\|\nabla\v_2(\zeta)\|^2_{\H}\bigg]\|\nabla\mathscr{U}(\zeta)\|^{2}_{\H}\nonumber\\&\quad+C\bigg[\zeta-\frac{\tau+t}{2}\bigg]\bigg[\|\v_1(\zeta)\|^{2}_{\H}+\|\A\v_1(\zeta)\|^{2}_{\H}+\|\nabla\v_1(\zeta)\|^{2}_{\H}+\frac{\|\v_2(\zeta)\|^2_{\H}}{\z^2(\zeta,\omega)}+\frac{\|\nabla\v_2(\zeta)\|^2_{\H}}{\z^2(\zeta,\omega)}\nonumber\\&\quad+\frac{\|\v_2(\zeta)\|^2_{\H}\|\nabla\v_2(\zeta)\|^2_{\H}}{\z^4(\zeta,\omega)}\bigg]\|\mathscr{U}(\zeta)\|^2_{\H}+ 2\sigma\bigg[\zeta-\frac{\tau+t}{2}\bigg] |y(\theta_{\zeta}\omega)|\|\nabla\mathscr{U}(\zeta)\|^{2}_{\H}.
	\end{align}
	An integration by parts leads  to
	\begin{align}\label{Conti9}
		\int_{\frac{\tau+t}{2}}^{t}\bigg[\zeta-\frac{\tau+t}{2}\bigg]\frac{\d}{\d\zeta}\|\nabla\mathscr{U}(\zeta)\|^2_{\H}\d\zeta= \frac{t-\tau}{2}\|\nabla\mathscr{U}(t)\|^2_{\H}-\int_{\frac{\tau+t}{2}}^{t}\|\nabla\mathscr{U}(\zeta)\|^2_{\H}\d\zeta.
	\end{align}
	Integrating \eqref{Conti8} over $[\frac{\tau+t}{2},t]$ and using \eqref{Conti9}, we arrive at
	\begin{align}\label{Conti10}
		&\frac{t-\tau}{2}\|\nabla\mathscr{U}(t)\|^2_{\H}\nonumber\\& \leq \int_{\frac{\tau+t}{2}}^{t}\|\nabla\mathscr{U}(\zeta)\|^2_{\H}\d\zeta+\frac{C}{2}(t-\tau)\sup_{\zeta\in[\frac{\tau+t}{2},t]}\bigg[\frac{1}{\z^4(\zeta,\omega)}\bigg\{\|\v_1(\zeta)\|^{2}_{\H}+\|\nabla\v_1(\zeta)\|^{2}_{\H}\nonumber\\&\quad+\|\v_2(\zeta)\|^2_{\H}\|\nabla\v_2(\zeta)\|^2_{\H}\bigg\}\bigg]\int_{\frac{\tau+t}{2}}^{t}\|\nabla\mathscr{U}(\zeta)\|^{2}_{\H}\d\zeta+\frac{C}{2}(t-\tau)\sup_{\zeta\in[\frac{\tau+t}{2},t]}\left[\|\mathscr{U}(\zeta)\|^2_{\H}\right]\nonumber\\&\quad\times\int_{\frac{\tau+t}{2}}^{t}\bigg[\|\v_1(\zeta)\|^{2}_{\H}+\|\A\v_1(\zeta)\|^{2}_{\H}+\|\nabla\v_1(\zeta)\|^{2}_{\H}+\frac{\|\v_2(\zeta)\|^2_{\H}}{\z^2(\zeta,\omega)}+\frac{\|\nabla\v_2(\zeta)\|^2_{\H}}{\z^2(\zeta,\omega)}\nonumber\\&\quad+\frac{\|\v_2(\zeta)\|^2_{\H}\|\nabla\v_2(\zeta)\|^2_{\H}}{\z^4(\zeta,\omega)}\bigg]\d\zeta+\sigma(t-\tau) \int_{\frac{\tau+t}{2}}^{t}|y(\theta_{\zeta}\omega)|\|\nabla\mathscr{U}(\zeta)\|^{2}_{\H}\d\zeta\nonumber\\&\leq\rho_6(t,\tau,\omega,\v_{2,\tau},\f)\|\mathscr{U}(\tau)\|^2_{\H}+\frac{C}{2}(t-\tau) \sup_{\zeta\in[\frac{\tau+t}{2},t]}\bigg[\frac{1}{\z^4(\zeta,\omega)}\bigg\{\rho(\zeta,\tau,\omega,\v_{1,\tau},\f)\nonumber\\&\quad+\widetilde{\rho}(\zeta,\tau,\omega,\v_{1,\tau},\f)+\rho(t,\tau,\omega,\v_{2,\tau},\f)\widetilde{\rho}(\zeta,\tau,\omega,\v_{2,\tau},\f)\bigg\}\bigg]\rho_6(t,\tau,\omega,\v_{2,\tau},\f)\|\mathscr{U}(\tau)\|^2_{\H}\nonumber\\&\quad+\frac{C}{2}(t-\tau)\bigg[\widehat{\rho}(t,\tau,\omega,\v_{1,\tau},\f)+\rho(t,\tau,\omega,\v_{1,\tau},\f)+\frac{\rho(t,\tau,\omega,\v_{2,\tau},\f)}{\sup_{\zeta\in[\tau,t]}\{\z(\zeta,\omega)\}^2}\nonumber\\&\quad+\frac{\{\rho(t,\tau,\omega,\v_{2,\tau},\f)\}^2}{\sup_{\zeta\in[\tau,t]}\{\z(\zeta,\omega)\}^4}\bigg]\rho_5(t,\tau,\omega,\v_{2,\tau},\f)\|\mathscr{U}(\tau)\|^2_{\H}+\frac{C}{2}(t-\tau)^2\bigg[\rho(t,\tau,\omega,\v_{1,\tau},\f)\nonumber\\&\quad+\frac{\rho(t,\tau,\omega,\v_{2,\tau},\f)}{\sup_{\zeta\in[\tau,t]}\{\z(\zeta,\omega)\}^2}\bigg]\rho_5(t,\tau,\omega,\v_{2,\tau},\f)\|\mathscr{U}(\tau)\|^2_{\H}\nonumber\\&\quad+\sigma(t-\tau)\sup_{\zeta\in[\tau,t]}|y(\theta_{\zeta}\omega)| \rho_6(t,\tau,\omega,\v_{2,\tau},\f)\|\mathscr{U}(\tau)\|^2_{\H},
	\end{align}
	for all $t>\tau$, where we have used \eqref{ei2}, \eqref{ei8} and \eqref{Conti6}-\eqref{Conti7}. It implies from \eqref{Conti10} that there exist a positive random variable $\rho_7(t,\tau,\omega,\v_{2,\tau},\v_{2,\tau},\f)$ such that for all $t>\tau$,
	\begin{align}
		\|\nabla\mathscr{U}(t)\|^2_{\H} \leq \rho_7(t,\tau,\omega,\v_{1,\tau},\v_{2,\tau},\f)\|\mathscr{U}(\tau)\|^2_{\H},
	\end{align}
	which completes the proof.
\end{proof}

	The following lemma helps us to prove the existence of a pullback $\mathfrak{D}$-random absorbing set. 
	
		\begin{lemma}\label{LemmaUe}
		Assume that Hypothesis \ref{Hyp-f} holds. Then, for every $(\tau,\omega,D)\in\R\times\Omega\times\mathfrak{D}$, there exists $\mathfrak{T}=\mathfrak{T}(\tau,\omega,D)>0$ such that for all $t\geq \mathfrak{T}$ and $s\geq \tau-t$, the solution $\v(\cdot)$ of the system \eqref{CNSE} satisfies (with $\omega$ replaced by $\theta_{-\tau}\omega$)
		\begin{align}\label{ue}
			&\|\v(\tau;\tau-t,\theta_{-\tau}\omega,\v_{\tau-t})\|^2_{\H} + 2\nu\int_{\tau-t}^{\tau}e^{-\int^{\tau}_{\zeta}\left(\frac{\sigma^2}{2}-2\sigma y(\theta_{\upeta-\tau}\omega)\right)\d \upeta}\|\nabla\v(\zeta;\tau-t,\theta_{-\tau}\omega,\v_{\tau-t})\|^2_{\H}\d\zeta\nonumber\\&\leq \frac{4}{\sigma^2}\int_{-\infty}^{0} e^{\frac{\sigma^2}{2}\zeta+2\sigma\int^{0}_{\zeta} y(\theta_{\upeta}\omega)\d \upeta} \z^2(\zeta,\omega) \|\f(\cdot,\zeta+\tau)\|^2_{\L^2(\R^2)}\d \zeta,
		\end{align}
		where $\v_{\tau-t}\in D(\tau-t,\theta_{-t}\omega).$ 
	\end{lemma}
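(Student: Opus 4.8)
The plan is to derive an energy inequality for $\|\v(t)\|_{\H}^2$ with an \emph{exponential} weight tuned to the stochastic coefficient, then integrate backward in the pullback variable and discard the initial-data contribution via temperedness. Starting from the first equation of \eqref{CNSE}, take the $\H$-inner product with $\v$; using $\langle\B(\v),\v\rangle=0$ from \eqref{b0} and Cauchy--Schwarz--Young on the forcing term $\z(t,\omega)(\mathcal P\f,\v)$, I obtain for a.e. $t\ge\tau$
\begin{align*}
	\frac{\d}{\d t}\|\v(t)\|_{\H}^2+\left(\frac{\sigma^2}{2}-2\sigma y(\theta_t\omega)\right)\|\v(t)\|_{\H}^2+2\nu\|\nabla\v(t)\|_{\H}^2\le\frac{4}{\sigma^2}\z^2(t,\omega)\|\f(t)\|_{\L^2(\R^2)}^2,
\end{align*}
which is essentially \eqref{ei1} but keeping the coefficient $\frac{\sigma^2}{2}-2\sigma y(\theta_t\omega)$ on the left rather than moving the $y$-term to the right. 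Writing $R(t)=\frac{\sigma^2}{2}-2\sigma y(\theta_t\omega)$ and multiplying by the integrating factor $e^{\int_\tau^t R(\upeta)\d\upeta}$, the left side becomes $\frac{\d}{\d t}\big(e^{\int_\tau^t R}\|\v(t)\|_{\H}^2\big)+2\nu e^{\int_\tau^t R}\|\nabla\v(t)\|_{\H}^2$.

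Next I would integrate this over $[\tau-t,\tau]$ (after the substitution $\omega\mapsto\theta_{-\tau}\omega$, so that $R(\upeta)$ reads $\frac{\sigma^2}{2}-2\sigma y(\theta_{\upeta-\tau}\omega)$), obtaining
\begin{align*}
	&\|\v(\tau)\|_{\H}^2+2\nu\int_{\tau-t}^{\tau}e^{-\int_\zeta^\tau R(\upeta)\d\upeta}\|\nabla\v(\zeta)\|_{\H}^2\d\zeta\\
	&\le e^{-\int_{\tau-t}^{\tau}R(\upeta)\d\upeta}\|\v_{\tau-t}\|_{\H}^2+\frac{4}{\sigma^2}\int_{\tau-t}^{\tau}e^{-\int_\zeta^\tau R(\upeta)\d\upeta}\z^2(\zeta,\theta_{-\tau}\omega)\|\f(\cdot,\zeta)\|_{\L^2(\R^2)}^2\d\zeta.
\end{align*}
Then I change variables $\zeta\to\zeta+\tau$ in both remaining integrals so the exponents become $\frac{\sigma^2}{2}\zeta+2\sigma\int_\zeta^0 y(\theta_\upeta\omega)\d\upeta$ and $\z^2(\zeta,\theta_{-\tau}\omega)=\z^2(\zeta+\tau,\theta_{-\tau}\omega)$ appears as $\z^2(\zeta,\omega)$ after using the cocycle identity $y(\theta_{\zeta+\tau}\theta_{-\tau}\omega)=y(\theta_\zeta\omega)$; enlarging the domain of the forcing integral from $[-t,0]$ to $(-\infty,0]$ (the integrand is nonnegative) gives exactly the right-hand side of \eqref{ue}.

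The one real point to check is that the initial-data term $e^{-\int_{\tau-t}^{\tau}R(\upeta)\d\upeta}\|\v_{\tau-t}\|_{\H}^2$ tends to zero as $t\to\infty$, uniformly enough that it is absorbed for $t\ge\mathfrak T(\tau,\omega,D)$; this is where the growth condition \eqref{Z3} on $y$ and the temperedness \eqref{D_1} of $D$ enter. After the change of variables the exponent is $-\int_{-t}^{0}\big(\frac{\sigma^2}{2}-2\sigma y(\theta_\upeta\omega)\big)\d\upeta=-\frac{\sigma^2}{2}t+2\sigma\int_{-t}^0 y(\theta_\upeta\omega)\d\upeta$, and by \eqref{Z3} the second term is $o(t)$, so the whole factor behaves like $e^{-(\frac{\sigma^2}{2}-\epsilon)t}$ for any small $\epsilon>0$ and large $t$; since $\v_{\tau-t}=\z(\tau-t,\theta_{-\tau}\omega)\cdot(\text{element of }D(\tau-t,\theta_{-t}\omega))$ with $\z$ growing sub-exponentially, the product $e^{-\int_{\tau-t}^\tau R}\|\v_{\tau-t}\|_{\H}^2\le C e^{-(\frac{\sigma^2}{2}-\epsilon)t}\|D(\tau-t,\theta_{-t}\omega)\|_{\H}^2\to0$ by \eqref{D_1}. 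Hence there is $\mathfrak T$ beyond which this term is $\le$ the right-hand side (indeed can be dropped after absorbing a constant), proving \eqref{ue}. The main obstacle is purely bookkeeping: tracking the shifts $\theta_{-\tau}$ through the integrating factor and the two changes of variables so the exponents land in precisely the stated form; the analytic content is the standard Gronwall-with-random-coefficient argument already used for \eqref{ei22}.
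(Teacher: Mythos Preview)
Your proposal is correct and follows essentially the same route as the paper: derive the weighted energy inequality \eqref{ue1}, apply the integrating factor (variation of constants), shift $\omega\mapsto\theta_{-\tau}\omega$, change variables $\zeta\mapsto\zeta+\tau$, and kill the initial-data term via \eqref{Z3} and the temperedness condition \eqref{D_1}. One small correction: by the statement of the lemma, $\v_{\tau-t}$ is itself an element of $D(\tau-t,\theta_{-t}\omega)$, not $\z(\tau-t,\theta_{-\tau}\omega)$ times such an element, so the extra sub-exponential $\z$-factor you invoke is unnecessary (and your argument only simplifies). Also note that the paper obtains the constant $\tfrac{2}{\sigma^2}$ in the energy inequality and then doubles it to $\tfrac{4}{\sigma^2}$ precisely by bounding the initial-data term, for $t\ge\mathfrak{T}$, by one copy of the forcing integral (this is \eqref{ue3}); your $\tfrac{4}{\sigma^2}$ at the energy-inequality stage is a harmless slip, but to land on exactly the stated constant you should use $\tfrac{2}{\sigma^2}$ there and then absorb as the paper does.
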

	\begin{proof}
		From the first equation of the system \eqref{CNSE}, \eqref{b0} and Young's inequality, we find 
		\begin{align}\label{ue1}
			\frac{\d}{\d t} \|\v(t)\|^2_{\H}+ \left[\frac{\sigma^2}{2}-2 \sigma y(\theta_{t}\omega)\right]\|\v(t)\|^2_{\H} +2\nu\|\nabla\v(t)\|^2_{\H} \leq \frac{2\z^2(t,\omega)}{\sigma^2}\|\f(t)\|^2_{\L^2(\R^2)},
		\end{align}
		for a.e. $t\geq\tau$. Applying the variation of constant formula to \eqref{ue1} and replacing $\omega$ by $\theta_{-\tau}\omega$ in the above inequality, we obtain
		\begin{align}\label{ue2}
			&\|\v(\tau;\tau-t,\theta_{-\tau}\omega,\v_{\tau-t})\|^2_{\H} + 2\nu\int\limits_{\tau-t}^{\tau}e^{-\int\limits^{\tau}_{\zeta}\left(\frac{\sigma^2}{2}-2\sigma y(\theta_{\upeta-\tau}\omega)\right)\d \upeta}\|\nabla\v(\zeta;\tau-t,\theta_{-\tau}\omega,\v_{\tau-t})\|^2_{\H}\d\zeta\nonumber\\&\leq e^{-\int\limits^{\tau}_{\tau-t}\left(\frac{\sigma^2}{2}-2\sigma y(\theta_{\upeta-\tau}\omega)\right)\d \upeta}\|\v_{\tau-t}\|^2_{\H}+ \frac{2}{\sigma^2}\int\limits_{\tau-t}^{\tau} e^{-\int\limits^{\tau}_{\zeta}\left(\frac{\sigma^2}{2}-2\sigma y(\theta_{\upeta-\tau}\omega)\right)\d \upeta} \z^2(\zeta,\theta_{-\tau}\omega) \|\f(\cdot,\zeta)\|^2_{\L^2(\R^2)}\d \zeta\nonumber\\&\leq e^{-\frac{\sigma^2}{2}t+2\sigma\int\limits^{0}_{-t} y(\theta_{\upeta}\omega)\d \upeta}\|\v_{\tau-t}\|^2_{\H}+ \frac{2}{\sigma^2}\int\limits_{-\infty}^{0} e^{\frac{\sigma^2}{2}\zeta+2\sigma\int\limits^{0}_{\zeta} y(\theta_{\upeta}\omega)\d \upeta} \z^2(\zeta,\omega) \|\f(\cdot,\zeta+\tau)\|^2_{\L^2(\R^2)}\d \zeta.
		\end{align}
		Since $\v_{\tau-t}\in D(\tau-t,\theta_{-t}\omega)$, we have from \eqref{Z3} that for sufficiently large $t>0$
		\begin{align*}
			e^{-\frac{\sigma^2}{2}t+2\sigma\int^{0}_{-t} y(\theta_{\upeta}\omega)\d \upeta}\|\v_{\tau-t}\|^2_{\H}\leq e^{-\frac{\sigma^2}{4} t}\|D(\tau-t,\theta_{-t}\omega)\|^2_{\H}\to 0\ \text{ as } \ t \to \infty.
		\end{align*} 
		Therefore, there exists $\mathfrak{T}=\mathfrak{T}(\tau,\omega,D)>0$ such that 
		\begin{align}\label{ue3}
			e^{-\frac{\sigma^2}{2}t+2\sigma\int^{0}_{-t} y(\theta_{\upeta}\omega)\d \upeta}\|\v_{\tau-t}\|^2_{\H}\leq \frac{2}{\sigma^2}\int_{-\infty}^{0} e^{\frac{\sigma^2}{2}\zeta+2\sigma\int\limits^{0}_{\zeta} y(\theta_{\upeta}\omega)\d \upeta} \z^2(\zeta,\omega) \|\f(\cdot,\zeta+\tau)\|^2_{\L^2(\R^2)}\d \zeta,
		\end{align}
		 for all  $t\geq \mathfrak{T}$, which gives \eqref{ue}.
		Now, it is only left to estimate the final term of \eqref{ue2}. By \eqref{Z3}, we have that there exist $R_1, R_2<0$ such that for all $\zeta\leq R_1$,
		\begin{align*}
			-2\sigma y(\theta_{\zeta}\omega)\leq-\left(\frac{\sigma^2}{4}-\frac{\delta}{2}\right)\zeta,
		\end{align*}
	and for all $\zeta\leq R_2$,
	\begin{align*}
		\frac{\sigma^2}{2}\zeta+2\sigma\int^{0}_{\zeta} y(\theta_{\upeta}\omega)\d \upeta\leq \left(\frac{\sigma^2}{4}+\frac{\delta}{2}\right)\zeta,
	\end{align*}
		where $\delta$ is the positive constant appearing in \eqref{forcing1}. Therefore, 
		\begin{align*}
			[\z(\zeta,{\omega})]^2=e^{-2\sigma y(\theta_{\zeta}\omega)}\leq e^{-\left(\frac{\sigma^2}{4}-\frac{\delta}{2}\right)\zeta},
		\end{align*}
		and we have for all $\zeta\leq R=:\min\{R_1,R_2\}$,
		\begin{align*}
			&e^{\frac{\sigma^2}{2}\zeta+2\sigma\int^{0}_{\zeta} y(\theta_{\upeta}\omega)\d \upeta} \z^2(\zeta,\omega) \|\f(\cdot,\zeta+\tau)\|^2_{\L^2(\R^2)}\leq e^{\delta\zeta} \|\f(\cdot,\zeta+\tau)\|^2_{\L^2(\R^2)}.
		\end{align*}
		Therefore, it follows from \eqref{forcing1} that for every $\tau\in\R$ and $\omega\in\Omega$,
		\begin{align}\label{ue4}
			\int_{-\infty}^{0} e^{\frac{\sigma^2}{2}\zeta+2\sigma\int^{0}_{\zeta} y(\theta_{\upeta}\omega)\d \upeta} \z^2(\zeta,\omega) \|\f(\cdot,\zeta+\tau)\|^2_{\L^2(\R^2)}\d \zeta<\infty.
		\end{align} 
		Hence, from \eqref{ue2}-\eqref{ue4}, \eqref{ue} follows.
	\end{proof}
Next lemma shows the existence of a pullback $\mathfrak{D}$-random absorbing set. 
	
\begin{lemma}\label{PA1}
	Assume that Hypothesis \ref{Hyp-f} holds. Then the continuous cocycle $\Phi$ associated with the system \eqref{SNSE} possesses a closed measurable pullback $\mathfrak{D}$-random absorbing set $\mathcal{K}=\{\mathcal{K}(\tau,\omega):\tau\in\R,\omega\in\Omega\}\in\mathfrak{D}$, where  $\mathcal{K}(\tau,\omega)$ is defined by 
	\begin{align}\label{AB1}
		\mathcal{K}(\tau,\omega)=\{\u\in\H:\|\u\|^2_{\H}\leq\mathcal{M}(\tau,\omega)\},
	\end{align}
	and $\mathcal{M}(\tau,\omega)$ is given by
	\begin{align*}
		\mathcal{M}(\tau,\omega)=\frac{4e^{2y(\omega)}}{\sigma^2}\int_{-\infty}^{0} e^{\frac{\sigma^2}{2}\zeta+2\sigma\int^{0}_{\zeta} y(\theta_{\upeta}\omega)\d \upeta} \z^2(\zeta,\omega) \|\f(\cdot,\zeta+\tau)\|^2_{\L^2(\R^2)}\d \zeta.
	\end{align*}
\end{lemma}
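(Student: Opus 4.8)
The plan is to verify that $\mathcal{K}=\{\mathcal{K}(\tau,\omega):\tau\in\R,\omega\in\Omega\}$ from \eqref{AB1} meets the four demands of Theorem \ref{AbstractResult}: $\mathfrak{D}$-pullback absorption, closedness and nonemptiness of each $\mathcal{K}(\tau,\omega)$, measurability, and the membership $\mathcal{K}\in\mathfrak{D}$. For absorption, fix $(\tau,\omega,D)\in\R\times\Omega\times\mathfrak{D}$ and $\u_{\tau-t}\in D(\tau-t,\theta_{-t}\omega)$. By \eqref{Phi1}, $\Phi(t,\tau-t,\theta_{-t}\omega,\u_{\tau-t})=\z^{-1}(\tau,\theta_{-\tau}\omega)\,\v(\tau;\tau-t,\theta_{-\tau}\omega,\v_{\tau-t})$ with $\v_{\tau-t}=\z(\tau-t,\theta_{-\tau}\omega)\u_{\tau-t}=e^{-\sigma y(\theta_{-t}\omega)}\u_{\tau-t}$; this rescaled data is still tempered in $t$ by \eqref{Z3}, so Lemma \ref{LemmaUe} applies and produces $\mathfrak{T}=\mathfrak{T}(\tau,\omega,D)>0$ such that \eqref{ue} holds for all $t\geq\mathfrak{T}$. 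Since $\|\Phi(t,\tau-t,\theta_{-t}\omega,\u_{\tau-t})\|^2_{\H}=\z^{-2}(\tau,\theta_{-\tau}\omega)\,\|\v(\tau;\tau-t,\theta_{-\tau}\omega,\v_{\tau-t})\|^2_{\H}$, the bound \eqref{ue} (compared with the definition of $\mathcal{M}$) gives $\|\Phi(t,\tau-t,\theta_{-t}\omega,\u_{\tau-t})\|^2_{\H}\leq\mathcal{M}(\tau,\omega)$, i.e. $\Phi(t,\tau-t,\theta_{-t}\omega,D(\tau-t,\theta_{-t}\omega))\subseteq\mathcal{K}(\tau,\omega)$ for $t\geq\mathfrak{T}$. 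Each $\mathcal{K}(\tau,\omega)$ is the nonempty closed ball in $\H$ of radius $\sqrt{\mathcal{M}(\tau,\omega)}$, with $\mathcal{M}(\tau,\omega)<\infty$ by \eqref{ue4}; and measurability of $(\tau,\omega)\mapsto\mathcal{K}(\tau,\omega)$ reduces to $(\mathscr{F},\mathscr{B}(\R))$-measurability of $\omega\mapsto\mathcal{M}(\tau,\omega)$, which follows from Tonelli's theorem once one observes that the integrand defining $\mathcal{M}$ is jointly measurable in $(\zeta,\omega)$ (using continuity of $t\mapsto y(\theta_t\omega)$).

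The substantive step, and the one I expect to be the main obstacle, is $\mathcal{K}\in\mathfrak{D}$: since $\|\mathcal{K}(\tau-t,\theta_{-t}\omega)\|^2_{\H}=\mathcal{M}(\tau-t,\theta_{-t}\omega)$, this amounts to $\lim_{t\to\infty}e^{-ct}\mathcal{M}(\tau-t,\theta_{-t}\omega)=0$ for every $c>0$. A direct substitution into $\mathcal{M}(\tau-t,\theta_{-t}\omega)$ leaves a spuriously growing factor $e^{\sigma^2t/2}$, so I would first simplify it. Inserting $y(\theta_\upeta(\theta_{-t}\omega))=y(\theta_{\upeta-t}\omega)$, shifting the inner integration variable, and then using the identity $\int_a^b y(\theta_s\omega)\,\d s=y(\theta_a\omega)-y(\theta_b\omega)+\omega(b)-\omega(a)$ (read off from \eqref{OU2} since $\W(\cdot,\omega)=\omega(\cdot)$), the drift integrals cancel against the endpoint values of $y$, and one is left with
\[
\mathcal{M}(\tau-t,\theta_{-t}\omega)=\frac{4}{\sigma^2}\,e^{2\sigma\omega(-t)}\int_{-\infty}^{0}e^{\frac{\sigma^2}{2}\zeta-2\sigma\omega(\zeta-t)}\,\|\f(\cdot,\zeta+\tau-t)\|^2_{\L^2(\R^2)}\,\d\zeta ;
\]
in particular the $e^{\sigma^2t/2}$ has disappeared and only the Brownian path $\omega$ enters directly.

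To conclude, given $c>0$ I would pick $\varepsilon\in(0,\sigma/4)$ so small that $\tfrac{\sigma^2}{2}-2\sigma\varepsilon\geq\delta$, and use the bound $|\omega(r)|\leq\varepsilon|r|+M_\varepsilon$ valid for all $r$ (from the sub-linearity in \eqref{Z3} together with continuity of $\omega$) to estimate the integrand above by $e^{2\sigma M_\varepsilon}e^{2\sigma\varepsilon t}e^{\delta\zeta}\|\f(\cdot,\zeta+\tau-t)\|^2_{\L^2(\R^2)}$ for $\zeta\leq0$; this yields $e^{-ct}\mathcal{M}(\tau-t,\theta_{-t}\omega)\leq C_\varepsilon\,e^{2\sigma\omega(-t)}\,e^{(2\sigma\varepsilon-c)t}\int_{-\infty}^{0}e^{\delta\zeta}\|\f(\cdot,\zeta+(\tau-t))\|^2_{\L^2(\R^2)}\,\d\zeta$. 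By Hypothesis \ref{Hyp-f}, applying \eqref{forcing2} with $s=\tau-t\to-\infty$, the last integral is $o(e^{\eta t})$ for every $\eta>0$, while $e^{2\sigma\omega(-t)}=o(e^{\eta t})$ for every $\eta>0$ by \eqref{Z3}; choosing $\varepsilon,\eta$ with $2\sigma\varepsilon+2\eta<c$ forces $e^{-ct}\mathcal{M}(\tau-t,\theta_{-t}\omega)\to0$, so $\mathcal{K}\in\mathfrak{D}$. The delicate points are thus confined to this last part: carrying out the telescoping so the $e^{\sigma^2t/2}$ cancels, and matching the surviving decay rate $\tfrac{\sigma^2}{2}-2\sigma\varepsilon$ against the admissible exponent $\delta<\tfrac{\sigma^2}{2}$ of Hypothesis \ref{Hyp-f}; the absorption, closedness and measurability of $\mathcal{K}$ are routine once Lemma \ref{LemmaUe} is available.
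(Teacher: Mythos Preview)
Your proof is correct and follows the same overall architecture as the paper's: absorption is read off directly from Lemma \ref{LemmaUe} after undoing the change of variable \eqref{COV}, closedness and measurability are routine, and the real work is the verification that $\mathcal{K}\in\mathfrak{D}$. Where you and the paper diverge is in this last step. The paper performs the substitution $\zeta\mapsto\zeta-t$ in the defining integral for $\mathcal{M}(\tau-t,\theta_{-t}\omega)$, keeps the exponents expressed in terms of the Ornstein--Uhlenbeck process $y$, and then invokes the ergodic asymptotics \eqref{Z3} together with the integrability consequence \eqref{forcing1} to conclude. You instead integrate the Ornstein--Uhlenbeck relation \eqref{OU2} to obtain the telescoping identity $\int_a^b y(\theta_s\omega)\,\d s=y(\theta_a\omega)-y(\theta_b\omega)+\omega(b)-\omega(a)$, which collapses all the $y$-terms and leaves only the raw Brownian path; this makes the cancellation of the spurious $e^{\sigma^2 t/2}$ completely explicit, after which the sublinearity of $\omega$ and Hypothesis \ref{Hyp-f} in its original form \eqref{forcing2} finish the job. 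Your route is arguably more transparent about why the dangerous exponential disappears, while the paper's stays closer to the $y$-formulation used elsewhere in Section \ref{sec3}; both reach the same conclusion with comparable effort.
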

\begin{proof}
	We infer from \eqref{ue4} that
	\begin{align}\label{IRAS2-N}
		\mathcal{M}(\tau,\omega)&= \frac{4e^{2y(\omega)}}{\sigma^2}\int_{-\infty}^{0} e^{\frac{\sigma^2}{2}\zeta+2\sigma\int^{0}_{\zeta} y(\theta_{\upeta}\omega)\d \upeta} \z^2(\zeta,\omega) \|\f(\cdot,\zeta+\tau)\|^2_{\L^2(\R^2)}\d \zeta<\infty.
	\end{align}
	Hence, the absorption follows from Lemma \ref{LemmaUe}. For $c>0$, let $c_1=\min\{\frac{c}{2},\frac{\sigma^2}{4},\delta\}$ and consider
	\begin{align}\label{IRAS3-N}
		&\lim_{t\to+\infty}e^{-ct}\|\mathcal{K}(\tau-t,\theta_{-t}\omega)\|^2_{\H}\nonumber\\&\leq \lim_{t\to+\infty}e^{-ct}\left[\frac{4e^{2y(\theta_{-t}\omega)}}{\sigma^2}\int_{-\infty}^{0} e^{\frac{\sigma^2}{2}\zeta+2\sigma\int^{0}_{\zeta} y(\theta_{\upeta-t}\omega)\d \upeta} \z^2(\zeta-t,\omega) \|\f(\cdot,\zeta+\tau-t)\|^2_{\L^2(\R^2)}\d \zeta\right]\nonumber\\&=\lim_{t\to+\infty}e^{-ct}\left[\frac{4}{\sigma^2}e^{2y(\theta_{-t}\omega)+\int_{0}^{-t}y(\theta_{\upeta}\omega)\d\upeta}\int_{-\infty}^{-t} e^{\frac{\sigma^2}{2}(\zeta+t)-2\sigma\int^{0}_{\zeta} y(\theta_{\upeta-t}\omega)\d \upeta} \z^2(\zeta,\omega) \|\f(\cdot,\zeta+\tau)\|^2_{\L^2(\R^2)}\d \zeta\right] \nonumber\\&\leq\lim_{t\to+\infty}e^{-\left(c-\frac{c_1}{2}\right)t}\left[\frac{4}{\sigma^2}\int_{-\infty}^{-t} e^{\delta \zeta} \|\f(\cdot,\zeta+\tau)\|^2_{\L^2(\R^2)}\d \zeta\right] =0,
	\end{align}
	where we have used \eqref{Z3} and \eqref{forcing1}. It follows from \eqref{IRAS3-N} that $\mathcal{K}\in{\mathfrak{D}}$.
\end{proof}
Next lemma demonstrates the pullback $\mathfrak{D}$-asymptotic compactness of non-autonomous random DS $\Phi$. The proof of the following lemma is analogous to the proof of \cite[Lemma 5.3]{PeriodicWang} with some minor modifications, hence we are omitting it here.
\begin{lemma}\label{PA2}
	Assume that Hypothesis \ref{Hyp-f} holds. Then the continuous cocycle $\Phi$ associated with the system \eqref{SNSE} is $(\H,\H)$-pullback $\mathfrak{D}$-asymptotically compact.
\end{lemma}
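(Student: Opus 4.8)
The only genuine obstruction is that the embedding $\V\hookrightarrow\H$ fails to be compact on the unbounded domain $\R^2$, so I would follow the strategy of \cite[Lemma 5.3]{PeriodicWang}: derive a \emph{uniform spatial tail estimate} by a cut-off function argument and combine it with \emph{precompactness on large balls} obtained from the higher regularity of Lemma \ref{lem3.6} via the Rellich--Kondrachov theorem. First I would pass to the conjugated equation: by \eqref{Phi1} the prefactor $\z(\tau,\theta_{-\tau}\omega)=e^{-\sigma y(\omega)}$ is a fixed scalar, so it suffices to show that for every $\tau\in\R$, $\omega\in\Omega$, $D\in\mathfrak{D}$, every $t_n\to\infty$ and every $\u_{0,n}\in D(\tau-t_n,\theta_{-t_n}\omega)$ the sequence $w_n:=\v(\tau;\tau-t_n,\theta_{-\tau}\omega,\v_{0,n})$, with $\v_{0,n}=\z(\tau-t_n,\theta_{-\tau}\omega)\u_{0,n}$, has an $\H$-convergent subsequence; temperedness survives the conjugation because $\z(\cdot,\omega)$ grows subexponentially by \eqref{Z3}, so Lemmas \ref{LemmaUe} and \ref{PA1} apply to the $\v_{0,n}$.

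\textbf{Step 1 (a priori bounds).} I would first record the bounds needed. Applying Lemma \ref{LemmaUe} with terminal time $\zeta\in[\tau-1,\tau]$ and using that its bound is bounded in the terminal time, for $n$ large one gets $\sup_{\zeta\in[\tau-1,\tau]}\|\v(\zeta;\tau-t_n,\theta_{-\tau}\omega,\v_{0,n})\|_{\H}^2\le C(\tau,\omega)$ with $C(\tau,\omega)$ independent of $n$ (the part of the orbit preceding its entrance into the $\H$-absorbing ball being killed by the exponentially decaying weight together with temperedness of $D$). Feeding the datum at time $\tau-1$, which then lies in the $\H$-absorbing ball with $n$-independent radius, into Lemma \ref{lem3.6} on $[\tau-1,\tau]$ (whose bounds depend on the datum only through its $\H$-norm, monotonically) yields an $n$-independent random constant $\widetilde{\mathcal{M}}(\tau,\omega)$ with $\|w_n\|_{\V}^2\le\widetilde{\mathcal{M}}(\tau,\omega)$ for all large $n$. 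In particular, for each $K\in\N$ the restrictions $w_n|_{\mathcal{O}_K}$ to the ball $\mathcal{O}_K:=\{x\in\R^2:|x|<K\}$ are bounded in $\H^1(\mathcal{O}_K)$, hence precompact in $\L^2(\mathcal{O}_K)$.

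\textbf{Step 2 (uniform tail estimate).} I would fix a smooth $\varrho:\R^+\to[0,1]$ with $\varrho\equiv0$ on $[0,1]$, $\varrho\equiv1$ on $[2,\infty)$, $|\varrho'|\le C$, set $\varrho_k(x)=\varrho(|x|^2/k^2)$ and $P_k(t)=\int_{\R^2}\varrho_k|\v(t)|^2\d x$, and multiply the first equation of \eqref{CNSE} by $\varrho_k\v$. The pressure drops out, the advection term becomes $-\tfrac12\int_{\R^2}(\v\cdot\nabla\varrho_k)|\v|^2\d x$ since $\nabla\cdot\v=0$, and the viscous term splits into a nonnegative part and a cross term; since $|\nabla\varrho_k|\le C/k$ is supported in $\{k\le|x|\le\sqrt2\,k\}$, H\"older's, Ladyzhenskaya's and Young's inequalities bound these two remainders by $Ck^{-1}(1+\z^{-1}(t,\omega))\big(\|\v(t)\|_{\H}^4+\|\nabla\v(t)\|_{\H}^2\big)$, while the forcing term is treated exactly as in \eqref{ue1}. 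This gives, for a.e.\ $t$,
\begin{align*}
\frac{\d}{\d t}P_k(t)+\Big(\tfrac{\sigma^2}{2}-2\sigma y(\theta_t\omega)\Big)P_k(t)\le\frac{C}{k}\big(1+\z^{-1}(t,\omega)\big)\big(\|\v(t)\|_{\H}^4+\|\nabla\v(t)\|_{\H}^2\big)+C\,\z^2(t,\omega)\!\!\int_{|x|\ge k}\!\!|\f(x,t)|^2\d x.
\end{align*}
I would then apply the variation-of-constants formula on $(\tau-t,\tau)$ with $\omega$ replaced by $\theta_{-\tau}\omega$, arguing as in the proof of Lemma \ref{LemmaUe}: the initial term dies by temperedness; the $k^{-1}$-term is $\le Ck^{-1}$ times quantities bounded uniformly in the pullback time by the $\H$-bound of Step 1 and by the weighted dissipation integral $\int e^{-\int_\zeta^\tau(\frac{\sigma^2}{2}-2\sigma y)}\|\nabla\v(\zeta)\|_{\H}^2\d\zeta$ supplied by Lemma \ref{LemmaUe} (the extra $\z^{\pm1}$ factors being absorbed into the exponentially decaying weight); and the forcing term is dominated by $C\int_{-\infty}^{\tau}e^{\delta\zeta}\big(\int_{|x|\ge k}|\f(x,\zeta)|^2\d x\big)\d\zeta$, which tends to $0$ as $k\to\infty$ by Proposition \ref{Hypo_Conseq} and dominated convergence, as $\f(\cdot,\zeta)\in\L^2(\R^2)$. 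Hence for every $\eta>0$ there are $K(\eta,\tau,\omega)$ and $\mathfrak{T}_1(\eta,\tau,\omega,D)$ with $\int_{|x|\ge K}|w_n|^2\d x\le\eta$ whenever $t_n\ge\mathfrak{T}_1$.

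\textbf{Step 3 and the main difficulty.} By Step 1 a subsequence satisfies $w_n\rightharpoonup w$ in $\V$, and after a diagonal extraction over $K\in\N$ also $w_n\to w$ strongly in $\L^2(\mathcal{O}_K)$ for each $K$. Given $\eta>0$, choosing $K$ as in Step 2 gives, for $n$ large,
\begin{align*}
\|w_n-w\|_{\H}^2\le\|w_n-w\|_{\L^2(\mathcal{O}_K)}^2+2\!\!\int_{|x|\ge K}\!\!|w_n|^2\d x+2\!\!\int_{|x|\ge K}\!\!|w|^2\d x\le\|w_n-w\|_{\L^2(\mathcal{O}_K)}^2+4\eta,
\end{align*}
the tail of $w$ being estimated by weak lower semicontinuity of $\v\mapsto\int_{|x|\ge K}|\v|^2\d x$ on $\V$. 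Letting $n\to\infty$ and then $\eta\to0$ yields $w_n\to w$ in $\H$, which, undoing the conjugation, is the asserted $(\H,\H)$-pullback $\mathfrak{D}$-asymptotic compactness. I expect the main obstacle to be Step 2, and more precisely the \emph{advection} remainder: unlike the reaction--diffusion setting of \cite[Lemma 5.3]{PeriodicWang}, one must make $Ck^{-1}(1+\z^{-1})(\|\v\|_{\H}^4+\|\nabla\v\|_{\H}^2)$ uniformly small over the whole pullback window, which forces one to combine the $\V$-regularity of Lemma \ref{lem3.6} with the energy estimates of Lemma \ref{LemmaUe}, and where the absence of a Poincar\'e inequality on $\R^2$ is compensated by the It\^o damping $\tfrac{\sigma^2}{2}$ appearing in \eqref{CNSE} --- precisely the advantage of the It\^o over the Stratonovich formulation on the whole space emphasized in the introduction.
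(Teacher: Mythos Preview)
Your approach is exactly the one the paper adopts: the paper omits the proof and simply refers to \cite[Lemma~5.3]{PeriodicWang}, which is precisely the cut-off/tail-estimate argument combined with local compactness that you outline. One small correction to Step~2: the pressure does \emph{not} drop out, because $\varrho_k\v$ is not divergence-free and hence is not an admissible test function for \eqref{CNSE}; you must go back to the unprojected equation, and the pressure then contributes an additional $O(k^{-1})$ remainder $\,z^{-1}\!\int q\,(\nabla\varrho_k)\cdot\v\,\d x$, which is controlled via $-\Delta q=z^{-1}\partial_i\partial_j(v_iv_j)+\ldots$ and Calder\'on--Zygmund/Ladyzhenskaya estimates, exactly as in \cite{PeriodicWang}.
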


The following lemma plays a crucial role in  proving the $(\mathscr{F},\mathscr{B}(\V))$-measurability  of the map $\Phi(t,\tau,\cdot,\x):\Omega\to\mathbb{V}$, for every fixed $t>0$, $\tau\in\R$ and $\x\in\H$.
	 \begin{proposition}\label{LusinC}
	 	Suppose that $\tau\in\R$, $t>\tau$, $\f\in\mathrm{L}^2_{\mathrm{loc}}(\R;\L^2(\R^2))$ and $\v_{\tau}\in\H$. For each $N\in\N$, the mapping $\omega\mapsto\v(t;\tau,\omega,\v_{\tau})$ $($solution of \eqref{CNSE}$)$ is continuous from $(\Omega_{N},d_{\Omega_N})$ to $\V$.
	 \end{proposition}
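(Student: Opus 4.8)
The plan is to run the same two-tier energy scheme used in Lemmas \ref{lem3.6} and \ref{Continuity}, but now tracking the dependence of the coefficients $y(\theta_s\omega)$ and $\z(s,\omega)=e^{-\sigma y(\theta_s\omega)}$ on the sample point via Lemma \ref{conv_z}. Since $(\Omega_N,d_{\Omega_N})$ is a metric space, it suffices to prove sequential continuity, so I fix $\tau\in\R$, $t>\tau$, $\v_\tau\in\H$, and let $\omega_k,\omega_0\in\Omega_N$ with $d_\Omega(\omega_k,\omega_0)\to0$. Put $\v^k(\cdot):=\v(\cdot;\tau,\omega_k,\v_\tau)$, $\v^0(\cdot):=\v(\cdot;\tau,\omega_0,\v_\tau)$ and $\mathscr{U}^k:=\v^k-\v^0$, so that $\mathscr{U}^k(\tau)=\mathbf{0}$. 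Subtracting the two copies of \eqref{CNSE} and regrouping, one obtains in $\V^*$, for a.e.\ $s>\tau$,
\begin{align*}
	\frac{\d\mathscr{U}^k}{\d s}+\nu\A\mathscr{U}^k+\Big(\tfrac{\sigma^2}{2}-\sigma y(\theta_s\omega_k)\Big)\mathscr{U}^k+\z^{-1}(s,\omega_k)\big(\B(\v^k)-\B(\v^0)\big)=\mathbf{g}_k(s),
\end{align*}
where $\mathbf{g}_k(s):=\sigma\big(y(\theta_s\omega_k)-y(\theta_s\omega_0)\big)\v^0(s)-\big(\z^{-1}(s,\omega_k)-\z^{-1}(s,\omega_0)\big)\B(\v^0(s))+\big(\z(s,\omega_k)-\z(s,\omega_0)\big)\mathcal{P}\f(s)$ gathers all the terms carrying a difference of sample-dependent coefficients.

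\textbf{Step 1: uniform-in-$k$ a priori bounds and smallness of the source.} By Lemma \ref{conv_z}, $\sup_k\sup_{s\in[\tau,t]}\big(|y(\theta_s\omega_k)|+\z^{\pm1}(s,\omega_k)\big)<\infty$; feeding this into Lemma \ref{lem3.6} makes the random variables $\rho,\widetilde\rho,\widehat\rho$ (and the auxiliary ones) evaluated at $\omega_k$ bounded uniformly in $k$ on $[\tau,t]$, so that $\sup_k\big(\sup_{[\tau,t]}\|\v^k\|_{\H}^2+\int_\tau^t\|\nabla\v^k\|_{\H}^2\d s\big)<\infty$ and, for each $\tau'\in(\tau,t]$, $\sup_k\big(\sup_{[\tau',t]}\|\nabla\v^k\|_{\H}^2+\int_{\tau'}^t\|\A\v^k\|_{\H}^2\d s\big)<\infty$. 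Lemma \ref{conv_z} also gives $\sup_{s\in[\tau,t]}\big(|y(\theta_s\omega_k)-y(\theta_s\omega_0)|+|\z(s,\omega_k)-\z(s,\omega_0)|+|\z^{-1}(s,\omega_k)-\z^{-1}(s,\omega_0)|\big)\to0$ as $k\to\infty$; since $\v^0\in L^2(\tau,t;\H)$, $\f\in L^2(\tau,t;\L^2(\R^2))$ and $\|\B(\v^0(\cdot))\|_{\V^*}\in L^2(\tau,t)$ (as $\|\B(\v^0)\|_{\V^*}\le C\|\v^0\|_{\H}\|\nabla\v^0\|_{\H}$ by Ladyzhenskaya's inequality), this yields $\int_\tau^t\|\mathbf{g}_k(s)\|_{\V^*}^2\d s\to0$; moreover, on each $[\tau',t]$ with $\tau'>\tau$ one also has $\int_{\tau'}^t\|\mathbf{g}_k(s)\|_{\H}^2\d s\to0$, because $\|\B(\v^0(\cdot))\|_{\H}\in L^2(\tau',t)$ by \eqref{b1} together with the $\H^2$-bound of Step 1.

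\textbf{Steps 2 and 3: convergence in $\H$ and then in $\V$.} Pairing the $\mathscr{U}^k$-equation with $\mathscr{U}^k$ and repeating \eqref{Conti2}--\eqref{Conti4} verbatim, with the extra term absorbed by $|\langle\mathbf{g}_k,\mathscr{U}^k\rangle|\le\tfrac\nu2\|\nabla\mathscr{U}^k\|_{\H}^2+C\|\mathbf{g}_k\|_{\V^*}^2$, leads to $\frac{\d}{\d s}\|\mathscr{U}^k\|_{\H}^2\le\big(C\z^{-4}(s,\omega_0)\|\v^0\|_{\H}^2\|\nabla\v^0\|_{\H}^2+\sigma|y(\theta_s\omega_k)|\big)\|\mathscr{U}^k\|_{\H}^2+C\|\mathbf{g}_k\|_{\V^*}^2$; since $\mathscr{U}^k(\tau)=\mathbf{0}$, Gronwall's inequality and Step 1 give $\sup_{[\tau,t]}\|\mathscr{U}^k\|_{\H}^2+\int_\tau^t\|\nabla\mathscr{U}^k\|_{\H}^2\d s\le C\int_\tau^t\|\mathbf{g}_k\|_{\V^*}^2\d s\to0$. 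Next, pairing with $\A\mathscr{U}^k$, writing $\langle\B(\v^k)-\B(\v^0),\A\mathscr{U}^k\rangle=b(\mathscr{U}^k,\v^k,\A\mathscr{U}^k)+b(\v^0,\mathscr{U}^k,\A\mathscr{U}^k)$ and estimating exactly as in \eqref{Conti7} (with $\langle\mathbf{g}_k,\A\mathscr{U}^k\rangle$ absorbed by $\tfrac\nu4\|\A\mathscr{U}^k\|_{\H}^2+C\|\mathbf{g}_k\|_{\H}^2$) produces $\frac{\d}{\d s}\|\nabla\mathscr{U}^k\|_{\H}^2\le\Psi_k(s)\|\nabla\mathscr{U}^k\|_{\H}^2+\Xi_k(s)\|\mathscr{U}^k\|_{\H}^2+C\|\mathbf{g}_k(s)\|_{\H}^2$, where $\Psi_k,\Xi_k\in L^1_{\mathrm{loc}}((\tau,t])$ are bounded in $L^1(\tau',t)$ uniformly in $k$ for every $\tau'>\tau$ (by Step 1 and Lemma \ref{conv_z}). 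Multiplying by $\big(s-\tfrac{\tau+t}{2}\big)$ on $[\tfrac{\tau+t}{2},t]$, integrating by parts as in \eqref{Conti8}--\eqref{Conti10}, and invoking Step 2 together with $\int_{\frac{\tau+t}{2}}^t\|\mathbf{g}_k\|_{\H}^2\d s\to0$ and $\int_{\frac{\tau+t}{2}}^t\|\nabla\mathscr{U}^k\|_{\H}^2\d s\to0$, we conclude $\tfrac{t-\tau}{2}\|\nabla\mathscr{U}^k(t)\|_{\H}^2\to0$. Hence $\|\v^k(t)-\v^0(t)\|_{\V}^2=\|\mathscr{U}^k(t)\|_{\H}^2+\|\nabla\mathscr{U}^k(t)\|_{\H}^2\to0$ as $k\to\infty$, which is the assertion.

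\textbf{Main obstacle.} The analytic core is Step 3: controlling the borderline trilinear term $\langle\B(\v^k)-\B(\v^0),\A\mathscr{U}^k\rangle$ through the estimate \eqref{b1} of Remark \ref{TriEsti}, while simultaneously (i) keeping every constant uniform in the sample index $k$, which forces all sample-dependent quantities to be routed through Lemma \ref{conv_z}, and (ii) handling the low initial regularity $\v_\tau\in\H$: since $\nabla\mathscr{U}^k$ is uncontrolled at $s=\tau$, the weight $\big(s-\tfrac{\tau+t}{2}\big)$ device of Lemma \ref{Continuity} is indispensable, and one must verify that the $\B(\v^0)$-piece of $\mathbf{g}_k$ — which lies only in $L^2_{\mathrm{loc}}((\tau,t];\H)$, not up to $s=\tau$ — is harmless precisely because it enters the $\V$-estimate only after this time localization.
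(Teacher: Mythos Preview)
Your argument is correct and follows essentially the same two-tier energy scheme as the paper's own proof: an $\H$-level estimate via pairing with $\mathscr{U}^k$, followed by a $\V$-level estimate via pairing with $\A\mathscr{U}^k$ and the time-weight $(s-\tfrac{\tau+t}{2})$ device, all made uniform in $k$ through Lemma~\ref{conv_z}. The only cosmetic differences are your packaging of the coefficient-difference terms into a single source $\mathbf{g}_k$ and your use of the two-term bilinear splitting $b(\mathscr{U}^k,\v^k,\A\mathscr{U}^k)+b(\v^0,\mathscr{U}^k,\A\mathscr{U}^k)$ (which requires the uniform-in-$k$ bound $\int_{\tau'}^t\|\A\v^k\|_{\H}^2\d s$ that you correctly secure in Step~1), whereas the paper splits one step further into $b(\mathscr{U}^k,\mathscr{U}^k,\A\mathscr{U}^k)+b(\mathscr{U}^k,\v^0,\A\mathscr{U}^k)+b(\v^0,\mathscr{U}^k,\A\mathscr{U}^k)$ so that only $\|\A\v^0\|_{\H}$ appears; also, in your Step~2 display the factor should read $\z^{-4}(s,\omega_k)$ rather than $\z^{-4}(s,\omega_0)$, but this is harmless since both are uniformly bounded by Lemma~\ref{conv_z}.
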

	 \begin{proof}
	 	Assume that $\omega_k,\omega_0\in\Omega_N,\ N\in\mathbb{N}$ such that $d_{\Omega_N}(\omega_k,\omega_0)\to0$ as $k\to+\infty$. Let $\mathscr{U}^k:=\v^k-\v^0,$ where $\v^k=\v(t;\tau,\omega_k,\v_{\tau})$ and $\v^0=\v(t;\tau,\omega_0,\v_{\tau})$ for $t\geq\tau$. Then, $\mathscr{U}^k$ satisfies:
 	\begin{align}\label{LC1}
	 		\frac{\d\mathscr{U}^k}{\d t}&=-\nu \A\mathscr{U}^k-\left[\frac{\sigma^2}{2}-\sigma y(\theta_{t}\omega_k)\right]\mathscr{U}^k-\z^{-1}(t,\omega_k)\B\big(\v^k\big)+\z^{-1}(t,\omega_0)\B\big(\v^0\big)\nonumber\\&\quad+\left[y(\theta_{t}\omega_k)-y(\theta_{t}\omega_0)\right]\v^0+ \left[\z(t,\omega_k)-\z(t,\omega_0)\right]\mathcal{P}\f,
 	\end{align}
	 	in $\V^*$. Taking the inner product with $\mathscr{U}^k(\cdot)$ in \eqref{LC1}, and using \eqref{b0} and \eqref{441}, we obtain
	 	\begin{align}\label{LC2}
	 		\frac{1}{2}\frac{\d}{\d t}\|\mathscr{U}^k\|^2_{\H}&=-\nu\|\nabla\mathscr{U}^k\|^2_{\H}-\left[\frac{\sigma^2}{2}-\sigma y(\theta_{t}\omega_k)\right]\|\mathscr{U}^k\|^2_{\H}-\z^{-1}(t,\omega_k)b(\mathscr{U}^k,\v^0,\mathscr{U}^k)\nonumber\\&\quad+\left[\z^{-1}(t,\omega_k)-\z^{-1}(t,\omega_0)\right]b(\v^0,\mathscr{U}^k,\v^0)+\left[y(\theta_{t}\omega_k)-y(\theta_{t}\omega_0)\right](\v^0,\mathscr{U}^k)\nonumber\\&\quad+\left[\z(t,\omega_k)-\z(t,\omega_0)\right](\f,\mathscr{U}^k).
	 	\end{align}
	 	Using H\"older's and Young's inequalities, we obtain
	 	\begin{align}
	 		\left|\left[\z(t,\omega_k)-\z(t,\omega_0)\right](\f,\mathscr{U}^k)\right|&\leq C\left|\z(t,\omega_k)-\z(t,\omega_0)\right|^2\|\f\|^2_{\L^2(\R^2)}+\frac{\sigma^2}{8}\|\mathscr{U}^k\|^2_{\H},\label{LC4}\\
	 		\left|\left[y(\theta_{t}\omega_k)-y(\theta_{t}\omega_0)\right](\v^0,\mathscr{U}^k)\right|&\leq C\left|y(\theta_{t}\omega_k)-(\theta_{t}\omega_0)\right|^2\|\v^0\|^2_{\H}+\frac{\sigma^2}{8}\|\mathscr{U}^k\|^2_{\H}.\label{LC44}
	 	\end{align}
	 	Applying H\"older's, Ladyzhenskaya's and Young's inequalities, we estimate
	 	\begin{align}
	 		\left|\z^{-1}(t,\omega_k)b(\mathscr{U}^k,\v^0,\mathscr{U}^k)\right|&\leq C\z^{-2}(t,\omega_k)\|\nabla\v^0\|^2_{\H}\|\mathscr{U}^k\|^2_{\H}+\frac{\nu}{4}\|\nabla\mathscr{U}^k\|^2_{\H}\label{LC7}
	 	\end{align}
	 	and
	 	\begin{align}
	 		&\left|\left[\z^{-1}(t,\omega_k)-\z^{-1}(t,\omega_0)\right]b(\v^0,\mathscr{U}^k,\v^0)\right|\nonumber\\&\leq C\left|\z^{-1}(t,\omega_k)-\z^{-1}(t,\omega_0)\right|^2\|\v^0\|^2_{\H}\|\nabla\v^0\|^2_{\H}+\frac{\nu}{4}\|\nabla\mathscr{U}^k\|^2_{\H}.\label{LC8}
	 	\end{align}
	 	Combining \eqref{LC2}-\eqref{LC8}, we arrive at (replacing $t$ by $\zeta$)
	 	\begin{align}\label{LC13}
	 		\frac{\d }{\d \zeta}\|\mathscr{U}^k(\zeta)\|^2_{\H}+\frac{\sigma^2}{2}\|\mathscr{U}^k(\zeta)\|^2_{\H}+\nu\|\nabla\mathscr{U}^k(\zeta)\|^2_{\H}\leq
	 		P_1(\zeta)\|\mathscr{U}^k(\zeta)\|^2_{\H}+Q_1(\zeta),
	 	\end{align}
	 	for a.e. $\zeta\geq\tau$, where
	 	\begin{align*}
	 		P_1^k&=C\z^{-2}(\zeta,\omega_k)\|\nabla\v^0\|^2_{\H}+2\sigma |y(\theta_{\zeta}\omega_k)|,\\	Q_1^k&=C\left|\z(\zeta,\omega_k)-\z(\zeta,\omega_0)\right|^2\|\f\|^2_{\L^2(\R^2)}+C\left|y(\theta_{\zeta}\omega_k)-(\theta_{\zeta}\omega_0)\right|^2\|\v^0\|^2_{\H}\nonumber\\&\quad+C\left|\z^{-1}(\zeta,\omega_k)-\z^{-1}(\zeta,\omega_0)\right|^2\|\v^0\|^2_{\H}\|\nabla\v^0\|^2_{\H}.
	 	\end{align*}
	 	Now, from the fact that $\f\in\mathrm{L}^2_{\text{loc}}(\R;\L^2(\R^2))$ and $\v^0\in\mathrm{C}([\tau,+\infty);\H)\cap\mathrm{L}^2_{\mathrm{loc}}(\tau,+\infty;\V)$, we conclude that for all $t\ge\tau$
	 		 	\begin{align}\label{LC15}
	 		\lim_{k\to+\infty}\int_{\tau}^{t}P_1^k(\zeta)\d\zeta\leq C(t,\tau,\omega_0)\ \  \text{ and } \ \ 	\lim_{k\to+\infty}\int_{\tau}^{t}Q_1^k(\zeta)\d\zeta=0.
	 	\end{align}
	 	Making use of Gronwall's inequality in \eqref{LC13}, we infer for all $t\geq\tau$,
	 	\begin{align}\label{LC17}
	 		\|\mathscr{U}^k(t)\|^2_{\H}\leq e^{\int_{\tau}^{t}P_1^k(\zeta)\d\zeta}\left[\int_{\tau}^{t}Q_1^k(\zeta)\d\zeta\right].
	 	\end{align}
	 	In view of \eqref{LC15}-\eqref{LC17}, we find for all $t\geq\tau$,
	 	\begin{align}\label{LC18}
	 		\|\mathscr{U}^k(t)\|^2_{\H}\to0 \ \text{ as } \ k \to +\infty.
	 	\end{align}
		Moreover, \eqref{LC13} along with \eqref{LC15}-\eqref{LC17} imply 
	 	\begin{align}\label{LC19}
		\int_{\tau}^{t}\|\mathscr{U}^k(\zeta)\|^2_{\V}\d\zeta\to0 \ \text{ as } \ k\to +\infty,\ \text{ for all }\ t\geq\tau.
	 	\end{align}
	 
	 	Taking the inner product with $\A\mathscr{U}^k(\cdot)$ in \eqref{LC1}, we find 
	 	\begin{align}
	 		\frac{1}{2}\frac{\d }{\d t}\|\nabla\mathscr{U}^k\|^2_{\H}&=-\nu\|\A\mathscr{U}^k\|^2_{\H}-\left[\frac{\sigma^2}{2}-\sigma y(\theta_{t}\omega_k)\right]\|\nabla\mathscr{U}^k\|^2_{\H}-\z^{-1}(t,\omega_k)b(\mathscr{U}^k,\mathscr{U}^k,\A\mathscr{U}^k)\nonumber\\&\quad-\z^{-1}(t,\omega_k)b(\v^0,\mathscr{U}^k,\A\mathscr{U}^k)-\z^{-1}(t,\omega_k)b(\mathscr{U}^k,\v^0,\A\mathscr{U}^k)\nonumber\\&\quad-\left[\z^{-1}(t,\omega_k)-\z^{-1}(t,\omega_0)\right]b(\v^0,\v^0,\A\mathscr{U}^k)+\left[\z(t,\omega_k)-\z(t,\omega_0)\right](\f,\A\mathscr{U}^k).\label{LC2-V}
	 	\end{align}
	 	Using H\"older's and Young's inequalities, we obtain
	 	\begin{align}
	 		\left|\left[\z(t,\omega_k)-\z(t,\omega_0)\right](\f,\A\mathscr{U}^k)\right|&\leq C\left|\z(t,\omega_k)-\z(t,\omega_0)\right|^2\|\f\|^2_{\H}+\frac{\nu}{10}\|\A\mathscr{U}^k\|^2_{\H},\label{LC4-V}
	 		\\
	 		\left|\left[y(\theta_{t}\omega_k)-y(\theta_{t}\omega_0)\right](\v^0,\A\mathscr{U}^k)\right|&\leq C\left|y(\theta_{t}\omega_k)-(\theta_{t}\omega_0)\right|^2\|\v^0\|^2_{\H}+\frac{\nu}{10}\|\A\mathscr{U}^k\|^2_{\H}.\label{LC44-V}
	 	\end{align}
	 	Using \eqref{b1} and Young's inequality, we estimate
	 	\begin{align}
	 		&\left|\z^{-1}(t,\omega_k)b(\mathscr{U}^k,\mathscr{U}^k,\A\mathscr{U}^k)\right|\nonumber\\&\leq\frac{\nu}{10}\|\A\mathscr{U}^k\|^{2}_{\H}+C\|\mathscr{U}^k\|^{2}_{\H}+C\z^{-4}(t,\omega_k)\|\mathscr{U}^k\|^{4}_{\H}\|\nabla\mathscr{U}^k\|^{2}_{\H}\nonumber\\&\quad+C\z^{-2}(t,\omega_k)\|\mathscr{U}^k\|^{2}_{\H}\|\nabla\mathscr{U}^k\|^{2}_{\H}+C\z^{-4}(t,\omega_k)\|\mathscr{U}^k\|^{2}_{\H}\|\nabla\mathscr{U}^k\|^{4}_{\H},\label{LC7-V}	
	 		\end{align}
 		\begin{align}
	 		&\left|\z^{-1}(t,\omega_k)b(\v^0,\mathscr{U}^k,\A\mathscr{U}^k)+\z^{-1}(t,\omega_k)b(\mathscr{U}^k,\v^0,\A\mathscr{U}^k)\right|\nonumber\\&\leq \frac{\nu}{10} \|\A\mathscr{U}^k\|^2_{\H} + C\big\{\|\v^0\|^{2}_{\H}+\|\A\v^0\|^{2}_{\H}+\|\nabla\v^0\|^{2}_{\H}+\z^{-2}(t,\omega_k)\|\v^0\|^2_{\H}+\z^{-2}(t,\omega_k)\|\nabla\v^0\|^2_{\H}\nonumber\\&\quad+\z^{-4}(t,\omega_k)\|\v^0\|^2_{\H}\|\nabla\v^0\|^2_{\H}\big\}\|\mathscr{U}^k\|^2_{\H}+C\z^{-4}(t,\omega_k)\big\{\|\v^0\|^{2}_{\H}+\|\nabla\v^0\|^{2}_{\H}\nonumber\\&\quad+\|\v^0\|^2_{\H}\|\nabla\v^0\|^2_{\H}\big\}\|\nabla\mathscr{U}^k\|^{2}_{\H},\label{LC9-V}
	 	\end{align}
	 	and
	 	\begin{align}
	 	&	\left|\left[\z^{-1}(t,\omega_k)-\z^{-1}(t,\omega_0)\right]b(\v^0,\v^0,\A\mathscr{U}^k)\right|\nonumber\\&\leq C\left|\z^{-1}(t,\omega_k)-\z^{-1}(t,\omega_0)\right|^2\big\{\|\nabla\v^0\|^4_{\H}+\|\v^0\|^2_{\H}\|\nabla\v^0\|^2_{\H}+\|\v^0\|^2_{\H}\|\A\v^0\|^2_{\H}\nonumber\\&\quad+\|\nabla\v^0\|^2_{\H}\|\A\v^0\|^2_{\H}\big\}+\frac{\nu}{10}\|\A\mathscr{U}^k\|^2_{\H}.\label{LC10-V}
	 	\end{align}
	 	Combining \eqref{LC2-V}-\eqref{LC10-V}, we reach at (replacing $t$ by $\zeta$)
	 	\begin{align}\label{LC11-V}
	 		\frac{\d }{\d \zeta}\|\nabla\mathscr{U}^k(\zeta)\|^2_{\H}\leq
	 		\widehat{P}_1^k(\zeta)\|\nabla\mathscr{U}^k(\zeta)\|^2_{\H}+\widehat{Q}_1^k(\zeta),
	 	\end{align}
	 	for a.e. $\zeta\geq\tau$, where
	 	\begin{align*}
	 		\widehat{P}_1^k&=C\z^{-2}(\zeta,\omega_k)\|\mathscr{U}^k\|^{2}_{\H}+C\z^{-4}(\zeta,\omega_k)\big\{\|\mathscr{U}^k\|^{4}_{\H}+\|\mathscr{U}^k\|^{2}_{\H}\|\nabla\mathscr{U}^k\|^{2}_{\H}+\|\v^0\|^{2}_{\H}+\|\nabla\v^0\|^{2}_{\H}\nonumber\\&\quad+\|\v^0\|^2_{\H}\|\nabla\v^0\|^2_{\H}+2\sigma |y(\theta_{\zeta}\omega_k)|\big\},\\	\widehat{Q}_1^k&=C\big\{1+\|\v^0\|^{2}_{\H}+\|\A\v^0\|^{2}_{\H}+\|\nabla\v^0\|^{2}_{\H}+\z^{-2}(\zeta,\omega_k)\|\v^0\|^2_{\H}+\z^{-2}(\zeta,\omega_k)\|\nabla\v^0\|^2_{\H}\nonumber\\&\quad+\z^{-4}(\zeta,\omega_k)\|\v^0\|^2_{\H}\|\nabla\v^0\|^2_{\H}\big\}\|\mathscr{U}^k\|^2_{\H}+C\left|\z^{-1}(\zeta,\omega_k)-\z^{-1}(\zeta,\omega_0)\right|^2\big\{\|\nabla\v^0\|^4_{\H}\nonumber\\&\quad+\|\v^0\|^2_{\H}\|\nabla\v^0\|^2_{\H}+\|\v^0\|^2_{\H}\|\A\v^0\|^2_{\H}+\|\nabla\v^0\|^2_{\H}\|\A\v^0\|^2_{\H}\big\}\nonumber\\&\quad+C\left|\z(\zeta,\omega_k)-\z(\zeta,\omega_0)\right|^2\|\f\|^2_{\L^2(\R^2)}+C\left|y(\theta_{\zeta}\omega_k)-(\theta_{\zeta}\omega_0)\right|^2\|\v^0\|^2_{\H}.
	 	\end{align*}
 	Multiplying \eqref{LC11-V} by $(\zeta-\frac{\tau+t}{2})$ with $\zeta\in[\frac{\tau+t}{2},t]$ and making use of integration by parts as same as in \eqref{Conti9}, we obtain
 	\begin{align}\label{LC12-V}
 			\frac{t-\tau}{2}\|\nabla\mathscr{U}^k(t)\|^2_{\H}\leq\bigg[1+
 		\sup_{\zeta\in[\frac{\tau+t}{2},t]}\widehat{P}_1^k(\zeta)\bigg]\int_{\frac{\tau+t}{2}}^{t}\|\nabla\mathscr{U}^k(\zeta)\|^2_{\H}\d\zeta+\int_{\frac{\tau+t}{2}}^{t}\widehat{Q}_1^k(\zeta)\d\zeta.
 	\end{align}
 Using the fact $\f\in\mathrm{L}^{2}_{\emph{loc}}(\R;\L^2(\R^2))$ and $d_{\Omega_N}(\omega_k,\omega_0)\to0$, the estimates established in \eqref{ei2} and \eqref{ei8}, and convergences obtained in \eqref{LC18}-\eqref{LC19}, we conclude from \eqref{LC12-V} that $	\|\nabla\mathscr{U}^k(t)\|^2_{\H}\to0 \ \text{ as } \ k \to +\infty,$ for all $t>\tau$, which completes the proof along with \eqref{LC18}.
	 \end{proof}

Now, we are in a position to state and prove our main result of this work. 
\begin{theorem}\label{MainTheoRA}
	Suppose that Hypothesis \ref{Hyp-f} holds. Then the non-autonomous random DS $\Phi$ has a unique $(\H,\V)$-pullback $\mathfrak{D}$-random attractor $\mathscr{A}=\{\mathscr{A}(\tau,\omega): \tau\in\R \text{ and }\omega\in \Omega\}\in\mathfrak{D}$ given by
		\begin{align*}
		\mathscr{A}(\tau,\omega)&=\bigcap_{s>0}\overline{\bigcup_{t\geq s}\Phi(t,\tau-t,\theta_{-t}\omega,\mathcal{K}(\tau-t,\theta_{-t}\omega))}^{\H}\\&=\bigcap_{s>0}\overline{\bigcup_{t\geq s}\Phi(t,\tau-t,\theta_{-t}\omega,\mathcal{K}(\tau-t,\theta_{-t}\omega))}^{\mathbb{V}},
	\end{align*}
where $\mathcal{K}(\tau,\omega)$ is the absorbing set obtained in Lemma \ref{PA1}.
\end{theorem}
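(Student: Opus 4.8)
The plan is to apply the abstract bi-spatial attractor theorem, Theorem \ref{AbstractResult}, with the initial space $\X=\H$ and the terminal space $\Y=\V$ (both complete metric spaces), over the metric dynamical system $(\Omega,\mathscr{F},\mathbb{P},\{\theta_t\}_{t\in\R})$ of Section \ref{sec2}. Four things must be in place: that $\Phi$ is an $(\H,\V)$-continuous random cocycle, that the universe $\mathfrak{D}$ is inclusion closed, and that hypotheses (i), (ii) and (iii) of Theorem \ref{AbstractResult} hold. That $\Phi$ defined in \eqref{Phi1} is a random cocycle, and that $\mathfrak{D}$ (defined through the temperedness condition \eqref{D_1}) is inclusion closed, were already recorded in Section \ref{sec2}.

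First I would establish the $(\H,\V)$-continuity of $\Phi$. By Lemma \ref{Continuity}, the solution map of the pathwise system \eqref{CNSE} is continuous in $\V$ with respect to $\H$-initial data; since the conjugation \eqref{COV} only multiplies by the scalar random variable $z(\cdot,\omega)$ and \eqref{Phi1} involves the shift $\theta_{-\tau}$, both harmless operations, this continuity transfers to $\Phi(t,\tau,\omega,\cdot):\H\to\V$. Hypothesis (i) is supplied by Lemma \ref{PA1}: $\Phi$ possesses the closed, measurable pullback $\mathfrak{D}$-random absorbing set $\mathcal{K}=\{\mathcal{K}(\tau,\omega)\}\in\mathfrak{D}$ given by \eqref{AB1}, which absorbs every $D\in\mathfrak{D}$ in the $\H$-norm. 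Hypothesis (ii), the $(\H,\H)$-pullback $\mathfrak{D}$-asymptotic compactness of $\Phi$, is Lemma \ref{PA2}.

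The remaining point, hypothesis (iii), is the $(\mathscr{F},\mathscr{B}(\V))$-measurability of $\omega\mapsto\Phi(t,\tau,\omega,\x)$ for each fixed $t>0$, $\tau\in\R$, $\x\in\H$, and this is the step I expect to require the most care, since $\Omega$ is not locally compact and a single global continuity statement is unavailable. I would use the decomposition $\Omega=\bigcup_{N\in\N}\Omega_N$ with each $(\Omega_N,d_{\Omega_N})$ Polish: by Proposition \ref{LusinC}, together with the continuity of $\omega\mapsto\theta_{-\tau}\omega$ and of $\omega\mapsto z(\cdot,\omega)$ (Lemma \ref{conv_z}), the map $\omega\mapsto\Phi(t,\tau,\omega,\x)$ is continuous from $(\Omega_N,d_{\Omega_N})$ into $\V$, hence $(\mathscr{B}(\Omega_N),\mathscr{B}(\V))$-measurable for each $N$. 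Since $\Omega_N\in\mathscr{F}$ and $\mathscr{B}(\Omega_N)$ is the trace of $\mathscr{F}$ on $\Omega_N$, the restriction of $\Phi(t,\tau,\cdot,\x)$ to each $\Omega_N$ is measurable, and a countable union over $N$ yields the full $(\mathscr{F},\mathscr{B}(\V))$-measurability on $\Omega$.

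With the $(\H,\V)$-continuity and hypotheses (i)--(iii) verified, Theorem \ref{AbstractResult} applies verbatim and produces a unique $(\H,\V)$-pullback $\mathfrak{D}$-random attractor $\mathscr{A}=\{\mathscr{A}(\tau,\omega)\}\in\mathfrak{D}$ whose fibre is $\bigcap_{s>0}\overline{\bigcup_{t\ge s}\Phi(t,\tau-t,\theta_{-t}\omega,\mathcal{K}(\tau-t,\theta_{-t}\omega))}$, the closure being taken in $\H$, and, by the last assertion of that theorem, equivalently in $\V$. This is exactly the asserted formula, which completes the proof. I note that the substantive work underpinning this argument lies not in the assembly above but in the preceding lemmas --- chiefly the $\V$-continuity estimate of Lemma \ref{Continuity}, which rests on the trilinear bound of Remark \ref{TriEsti} peculiar to the whole-space setting, and the Lusin-type continuity of Proposition \ref{LusinC}.
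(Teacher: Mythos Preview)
Your proposal is correct and follows essentially the same route as the paper: verify $(\H,\V)$-continuity via Lemma \ref{Continuity}, supply hypotheses (i)--(iii) of Theorem \ref{AbstractResult} through Lemmas \ref{PA1}, \ref{PA2} and Proposition \ref{LusinC}, and conclude. Your expansion of the measurability argument --- passing from continuity on each $\Omega_N$ to $(\mathscr{F},\mathscr{B}(\V))$-measurability on $\Omega$ via the countable cover --- is a welcome clarification that the paper's proof leaves implicit.
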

\begin{proof}
	Since $\Phi$ is $(\H,\V)$-continuous random cocycle (see Lemma \ref{Continuity}) and $\mathfrak{D}$ is an inclusion closed universe, we apply the abstract result from the work \cite{WZhao} (see Theorem \ref{AbstractResult} above) to prove this theorem. Lemma \ref{PA1} shows that $\Phi$ has closed pullback $\mathfrak{D}$-random absorbing set, Lemma \ref{PA2} reveals that $\Phi$ is $(\H,\H)$-pullback asymptotically compact and Lemma \ref{LusinC} proves that $\Phi(t,\tau,\cdot,\u_{\tau}):\Omega\to\mathbb{V}$ is $(\mathscr{F},\mathscr{B}(\V))$-measurable for every fixed $t>0$, $\tau\in\R$ and $\u_{\tau}\in\H$. We conclude that all the three conditions stated in Theorem \ref{AbstractResult} for $(\H,\V)$-continuous random cocycle $\Phi$ are satisfied. Hence, an application of Theorem \ref{AbstractResult} completes the proof.
\end{proof}
\begin{remark}\label{rem3.14}
	We observe that the method introduced in \cite{Wang} for the upper semicontinuity is not suitable for proving the upper semicontinuity of random attractors obtained in Theorem \ref{MainTheoRA} as $\sigma\to 0$. Because the right hand side of \eqref{ue} will tend to $\infty$ as $\frac{\sigma^2}{2}\to0$ and we will not able to find a random set which contains $\bigcup\limits_{0<\sigma\leq1}\mathcal{K}_{\sigma}(\tau,\omega)$, where $\mathcal{K}_{\sigma}(\tau,\omega)$ represents the absorbing set corresponding to each $\sigma$. Therefore the existence of pullback attractors for 2D deterministic NSE as well as the upper semicontinuity of random attractors for 2D SNSE on the whole space are still challenging open problems. 
 \end{remark}

\begin{remark}\label{rem315}
	In \cite{KM2}, the upper semicontinuity of the random attractors with respect to domains, that is, when domain changes from bounded to unbounded (Poincar\'e) domain is proved for stochastic convective Brinkman-Forchheimer equations (cf. \cite[Theorem 6.10]{KM2}). The authors have  also discussed the upper semicontinuity of the random attractors with respect to domains for  SNSE driven by additive noise as a remark (cf. \cite[Remark 6.11]{KM2}). Using the similar arguments as in the work \cite{KM2}, one can establish the upper semicontinuity of the random attractors with respect to domains for the 2D non-autonomous SNSE \eqref{SNSE} on the whole space.
\end{remark}

\begin{remark}\label{rem3.15}
	The authors in \cite{RKM} proved the asymptotic autonomy of pullback random attractors for 2D SNSE on unbounded Poincar\'e domains. They consider the additive as well as linear multiplicative white noise where the stochastic integration has been taken in the sense of Stratonovich. Using the similar ideas as in the work \cite{RKM}, one can obtain the asymptotic autonomy robustness of random attractors for the 2D non-autonomous SNSE \eqref{SNSE}. It means that if $\mathscr{A}_{\infty}(\omega)$ is the unique random attractor for the 2D autonomous SNSE \eqref{SNSE}, then one can prove that 
		 \begin{align}\label{AA}
		\lim_{\tau\to-\infty}\mathrm{dist}_{\L^2(\R^2)}\left(\mathscr{A}(\tau,\omega),\mathscr{A}_{\infty}(\omega)\right)=0,\  \text{ for all }\ \omega\in\Omega,
	\end{align}
	where $\mathrm{dist}_{\L^2(\R^2)}(\cdot,\cdot)$ denotes the Hausdorff semi-distance between two non-empty subsets of $\L^2(\R^2)$ (cf. \cite{RKM} for the detailed proof).
\end{remark}

	\section{Invariant Sample Measures and A Stochastic Liouville Type Theorem}\label{sec4}\setcounter{equation}{0}
In this section, we demonstrate the existence of a family of invariant sample measures which satisfies a stochastic Liouville type theorem (cf. \cite{CY,ZWC}) for the 2D non-autonomous SNSE \eqref{SNSE} on the whole space $\R^2$. 

\subsection{Invariant sample measures}
    Let us recall some definitions and results from \cite{Arnold}. Let $\X$ be a Polish space and $\Phi$ be a random DS over $\theta$.
\begin{definition}[Skew-product, \cite{Arnold}]\label{SPF}
	Given a random DS $\Phi$, the mapping
	\begin{align}
		\Theta_t: \Omega \times \X \ni (\omega,x) \mapsto (\theta_t(\omega),\Phi(t,\omega,x)) = \Theta_t(\omega,x) \in \Omega \times \X, \ \ t\in\R^+,
	\end{align}
	is a measurable DS on $(\Omega\times\X,\mathscr{F}\otimes\mathscr{B}(\X))$ which is called the \emph{skew product} of metric DS $(\Omega,\mathscr{F},\mathbb{P},\{\theta_{t}\}_{t\in\R})$ and random DS $\Phi(t,\omega)$ on $\X$.
\end{definition}
\begin{definition}[Invariant measure for $\Theta$, \cite{Arnold}]\label{IMSP}
	A probability measure $\uprho$ on $(\Omega\times\X,\mathscr{F}\otimes\mathscr{B}(\X))$ is called \emph{invariant for $\Theta$} corresponding to $\Phi$, if $\Theta_t\uprho=\uprho$ for all $t\in\R^+$.
\end{definition}
\begin{definition}[Invariant measure for $\Phi$, \cite{Arnold}]\label{IMRDS}
	A probability measure $\uprho$ on $(\Omega\times\X,\mathscr{F}\otimes\mathscr{B}(\X))$ is called \emph{invariant for $\Phi$} ($\Phi$-invariant), if 
	\begin{itemize}
		\item [(i)] $\Theta_t\uprho=\uprho$ for all $t\in\R^+$,
		\item [(ii)] $\pi_{\Omega}\uprho=\mathbb{P}$, that is, the first marginal of $\uprho$ is $\mathbb{P}$, where $\pi_{\Omega}:\Omega\times\X\ni(\omega,x)\mapsto\omega\in\Omega$.
	\end{itemize} 
\end{definition}
\noindent
Define
\begin{align*}
	&\mathscr{P}_{\mathbb{P}}(\Omega\times\X)\nonumber\\&:=\{\uprho:\uprho \text{ is probability measure on } (\Omega\times\X,\mathscr{F}\otimes\mathscr{B}(\X)) \text{ with marginal } \mathbb{P} \text{ on } (\Omega,\mathscr{F})\}.
\end{align*}
\begin{definition}[Sample measure, \cite{Arnold}]\label{SM}
	Let $\uprho\in\mathscr{P}_{\mathbb{P}}(\Omega\times\X)$. A mapping $\uprho_{\cdot}(\cdot):\Omega\times\mathscr{B}(\X)\to[0,1]$ is called a \emph{sample measure (or disintegration)} of $\uprho$ with respect to $\mathbb{P}$ if
	\begin{itemize}
		\item [(i)] for all $E\in\mathscr{B}(\X)$, $\omega\mapsto\uprho_{\omega}(E)$ is $\mathscr{F}$-measurable,
		\item [(ii)]  for $\mathbb{P}$-a.s. $\omega\in\Omega$, $E\mapsto\uprho_{\omega}(E)$ is a probability measure on $(\X,\mathscr{B}(\X))$,
		\item[(iii)] for all $G\in\mathscr{F}\times\mathscr{B}(\X)$,
		\begin{align}
			\uprho(G)=\int_{\Omega}\int_{\X}1_{G}(\omega,x)\uprho_{\omega}(\d x)\mathbb{P}(\d\omega).
		\end{align}
	\end{itemize}
\end{definition}

Let us  now recall some basic definitions from the work \cite{CY}.
\begin{definition}[Invariant sample measure, \cite{CY,ZWC}]\label{D-ISM}
	Let $\mathscr{P}(\X)$ denote the space of all probability measures on $\H$. A mapping $(t,\omega)\in\R\times\Omega\mapsto\uprho_{t,\omega}\in\mathscr{P}(\X)$ is called an invariant sample measure for continuous cocycle $\Phi$ if 
	\begin{align*}
		\int_{\X}\varphi(\u)\uprho_{t,\omega}(\d\u)&=\int_{\X}\varphi(\Phi(-\xi,t+\xi,\theta_{\xi}\omega,\u))\uprho_{t+\xi,\theta_{\xi}\omega}(\d\u),
	\end{align*} 
	for any real-valued continuous functional $\varphi$ on $\X$.
\end{definition}

\begin{definition}[Generalized Banach limit, \cite{CY,ZWC}]
	A  \emph{generalized Banach limit} is any linear functional, denoted by $\underset{t\to+\infty}{\mathrm{LIM}}$, defined on the space of all bounded real-valued functions on $[0,+\infty)$ and satisfying
	\begin{itemize}
		\item [(i)]$\underset{t\to+\infty}{\mathrm{LIM}}\zeta(t)\geq0$ for non-negative functions $\zeta(\cdot)$ on $[0,+\infty)$,
		\item [(ii)] $\underset{t\to+\infty}{\mathrm{LIM}}\zeta(t)=\underset{t\to+\infty}{\lim}\zeta(t)$ if the usual limit $\underset{t\to+\infty}{\lim}\zeta(t)$ exists.
	\end{itemize}
\end{definition}

In order to apply the abstract results from the work \cite{CY}, we require that $\Phi$ has a unique $\mathfrak{D}$-pullback random attractor, and for given $\u^*\in\H$, $t\in\R$ and $\omega\in\Omega$, the mapping $(\xi,\u^*)\mapsto\Phi(-\xi,t+\xi,\theta_{\xi}\omega,\u^*)$ is continuous from $(-\infty,0]\times\H$ to $\H$. The existence of $\mathfrak{D}$-pullback random attractors is shown in Theorem \ref{MainTheoRA}. The following lemma is helpful to obtain the continuity (discussed above) of the mapping $\Phi$.
\begin{lemma}\label{R_continuity}
	For given $\u^*\in\H$, $t\in\R$ and $\omega\in\Omega$, the mapping $\xi\mapsto\Phi(t-\xi,\xi,\theta_{\xi}\omega,\u^*)$ is right continuous from $(-\infty,t]$ to $\mathbb{H}$.	
\end{lemma}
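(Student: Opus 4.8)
The plan is to reduce the assertion to the continuous dependence of solutions of \eqref{CNSE} on their initial datum, which is already quantified in the proof of Lemma \ref{Continuity}. First I would note, using \eqref{Phi1}, the group property $\theta_{-\xi}\theta_{\xi}=\mathrm{id}$ and \eqref{COV}, that for every $\xi\le t$
\begin{align}\label{Rc1}
	\Phi(t-\xi,\xi,\theta_{\xi}\omega,\u^*)=\u(t;\xi,\omega,\u^*)=\z^{-1}(t,\omega)\,\v\big(t;\xi,\omega,\z(\xi,\omega)\u^*\big).
\end{align}
Since $\z^{-1}(t,\omega)$ is independent of $\xi$, it suffices to show that $\xi\mapsto\v(t;\xi,\omega,\z(\xi,\omega)\u^*)$ is right continuous from $(-\infty,t)$ into $\H$; the point $\xi=t$ is vacuous, since no sequence $\xi_n\downarrow t$ remains in $(-\infty,t]$.

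Next I would fix $\xi_0<t$ and a sequence $\xi_n\downarrow\xi_0$ with $\xi_n\in(\xi_0,t)$, set $\v_{\xi_0}:=\z(\xi_0,\omega)\u^*$, and invoke the flow property $\v(t;\xi_0,\omega,\v_{\xi_0})=\v\big(t;\xi_n,\omega,\v(\xi_n;\xi_0,\omega,\v_{\xi_0})\big)$, which follows from uniqueness of solutions of \eqref{CNSE}. Then $\v(t;\xi_n,\omega,\z(\xi_n,\omega)\u^*)-\v(t;\xi_0,\omega,\v_{\xi_0})$ is the value at time $t$ of the difference of two solutions of \eqref{CNSE} on $[\xi_n,t]$, driven by the same $\omega$ and issued at time $\xi_n$ from $\z(\xi_n,\omega)\u^*$ and from $\v(\xi_n;\xi_0,\omega,\v_{\xi_0})$, respectively; so, applying the estimate \eqref{Conti5} from the proof of Lemma \ref{Continuity} on $[\xi_n,t]$ (with the role of $\v_2$ played by $\v(\cdot;\xi_0,\omega,\v_{\xi_0})$), I would obtain
\begin{align}\label{Rc2}
	&\big\|\v(t;\xi_n,\omega,\z(\xi_n,\omega)\u^*)-\v(t;\xi_0,\omega,\v_{\xi_0})\big\|^2_{\H}\nonumber\\
	&\quad\leq\rho_5\big(t,\xi_n,\omega,\v(\xi_n;\xi_0,\omega,\v_{\xi_0}),\f\big)\,\big\|\z(\xi_n,\omega)\u^*-\v(\xi_n;\xi_0,\omega,\v_{\xi_0})\big\|^2_{\H}.
\end{align}

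It then remains to control the two factors on the right-hand side of \eqref{Rc2}. For the last factor, the pathwise continuity of $t\mapsto y(\theta_{t}\omega)$ (hence of $t\mapsto\z(t,\omega)$) gives $\z(\xi_n,\omega)\u^*\to\z(\xi_0,\omega)\u^*=\v_{\xi_0}$ in $\H$, while $\v(\cdot;\xi_0,\omega,\v_{\xi_0})\in\mathrm{C}([\xi_0,+\infty);\H)$ gives $\v(\xi_n;\xi_0,\omega,\v_{\xi_0})\to\v_{\xi_0}$ in $\H$, so this factor tends to $0$ as $n\to\infty$. For the prefactor, reading off the definition of $\rho_5$ in \eqref{Conti5} together with that of $\rho$ in \eqref{ei2}, every supremum and integral occurring there, taken over $[\xi_n,t]$, is dominated by the corresponding quantity over the fixed compact interval $[\xi_0,t]$, namely by $\sup_{\zeta\in[\xi_0,t]}\z^{-4}(\zeta,\omega)$, $\int_{\xi_0}^{t}|y(\theta_{\zeta}\omega)|\d\zeta$, $\int_{\xi_0}^{t}\z^2(\zeta,\omega)\|\f(\zeta)\|^2_{\L^2(\R^2)}\d\zeta$ and $\sup_{\zeta\in[\xi_0,t]}\|\v(\zeta;\xi_0,\omega,\v_{\xi_0})\|^2_{\H}$, all of which are finite because $y(\theta_{\cdot}\omega)$ is continuous and $\f\in\mathrm{L}^2_{\mathrm{loc}}(\R;\L^2(\R^2))$, whereas $\|\v(\xi_n;\xi_0,\omega,\v_{\xi_0})\|^2_{\H}\to\|\v_{\xi_0}\|^2_{\H}$ is bounded in $n$. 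Consequently $\sup_n\rho_5(t,\xi_n,\omega,\v(\xi_n;\xi_0,\omega,\v_{\xi_0}),\f)<\infty$, and \eqref{Rc2} yields $\v(t;\xi_n,\omega,\z(\xi_n,\omega)\u^*)\to\v(t;\xi_0,\omega,\v_{\xi_0})$ in $\H$; by \eqref{Rc1} this is precisely $\Phi(t-\xi_n,\xi_n,\theta_{\xi_n}\omega,\u^*)\to\Phi(t-\xi_0,\xi_0,\theta_{\xi_0}\omega,\u^*)$ in $\H$, the desired right continuity.

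The only step that is not essentially mechanical is the uniform-in-$n$ bound on the prefactor $\rho_5$, but I expect it to cause no real difficulty: by construction $\rho_5$ is an explicit nondecreasing functional of integrals and suprema over $[\xi_n,t]\subset[\xi_0,t]$, and the pathwise continuity of the Ornstein--Uhlenbeck process $y(\theta_{\cdot}\omega)$ together with $\f\in\mathrm{L}^2_{\mathrm{loc}}(\R;\L^2(\R^2))$ renders all these quantities finite on the fixed interval $[\xi_0,t]$.
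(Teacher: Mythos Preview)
Your proof is correct, and it is genuinely different from the paper's.

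The paper does not invoke the Lipschitz dependence estimate \eqref{Conti5} at all. Instead, after the same reduction \eqref{Rc1}, it fixes $\xi^*$ and a fixed $\v^*\in\H$ and establishes the uniform short-time estimate
\[
\|\v(\tau;\xi,\omega,\v^*)-\v^*\|_{\H}<\varepsilon\quad\text{for all }\xi\in(\xi^*,\xi^*+\varepsilon^*),\ \tau\in(\xi,\xi^*+\varepsilon^*),
\]
by writing $\|\v(\tau)-\v^*\|_{\H}^2=\int_{\xi}^{\tau}\frac{\d}{\d\zeta}\|\v(\zeta)\|^2_{\H}\d\zeta-2(\v(\tau)-\v^*,\v^*)$, controlling the first term via the energy inequality \eqref{ei1}--\eqref{ei22}, and handling the second via a density argument (approximating $\v^*$ by $\tilde{\v}\in\V$ and bounding $\|\frac{\d\v}{\d\zeta}\|_{\V^*}$). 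This yields a statement that is uniform in the initial time $\xi$ near $\xi^*$, at the cost of a considerably longer computation.

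Your route is shorter and more transparent: the flow property plus \eqref{Conti5} immediately reduces the question to convergence of the data at time $\xi_n$, and you correctly observe that the prefactor $\rho_5$ is a monotone functional of integrals and suprema over $[\xi_n,t]\subset[\xi_0,t]$ involving the \emph{fixed} trajectory $\v(\cdot;\xi_0,\omega,\v_{\xi_0})$, hence is uniformly bounded in $n$. What the paper's approach buys is an estimate that is self-contained (it does not presuppose Lemma~\ref{Continuity}) and uniform in the starting time, which could in principle be reused; what your approach buys is economy, since all the hard analysis has already been done in \eqref{Conti5}.
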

\begin{proof}
	Since $\Phi(t-\xi,\xi,\theta_{\xi}\omega,\u^*)=\u(t;\xi,\omega,\u^*)=\v(t;\xi,\omega,\v^*)\z^{-1}(t,\omega)$, it is enough to prove that $\v(t;\cdot,\omega,\v^*)$ with $\v^*=\u^*\z(\cdot,\omega)$ is right continuous on $(-\infty,t]$. Let us fix $\xi^*\in\R$ and $\v^*\in\H$, $\omega\in\Omega$. Now, we only need to prove that for any given $\varepsilon>0$, we can find a positive real number $\varepsilon^*=\varepsilon^*(\varepsilon,\xi^*,\omega,\v^*)$ such that 
	\begin{align}\label{RLTT1}
		\|\v(\tau;\xi,\omega,\v^*)-\v^*\|_{\H}<\varepsilon, \ \text{ whenever }\  \xi\in(\xi^*,\xi^*+\varepsilon^*), \ \tau\in(\xi,\xi^*+\varepsilon^*),
	\end{align}
	where $\v(\tau;\xi,\omega,\v^*)$ is the solution of the system \eqref{CNSE} with the initial data $\v^*$ and initial time $\xi$. 
	
	We have 
	\begin{align}\label{RLTT2}
		\|\v(\tau;\xi,\omega,\v^*)-\v^*\|^2_{\H}&=\|\v(\tau;\xi,\omega,\v^*)\|^2_{\H}-\|\v^*\|^2_{\H}-2(\v(\tau;\xi,\omega,\v^*)-\v^*,\v^*)\nonumber\\&=\int_{\xi}^{\tau}\frac{\d}{\d\zeta}\|\v(\zeta;\xi,\omega,\v^*)\|^2_{\H}\d\zeta-2(\v(\tau;\xi,\omega,\v^*)-\v^*,\v^*).
	\end{align}
	We infer from \eqref{ei1} that 
	\begin{align}\label{RLTT3}
		&\int_{\xi}^{\tau}\frac{\d}{\d\zeta}\|\v(\zeta;\xi,\omega,\v^*)\|^2_{\H}\d\zeta\nonumber\\&\leq \frac{2}{\sigma^2}\int_{\xi}^{\tau}\z^2(\zeta,\omega)\|\f(\zeta)\|^2_{\L^2(\R^2)}\d\zeta+2 \sigma\int_{\xi}^{\tau}|y(\theta_{\zeta}\omega)|\|\v(\zeta)\|^2_{\H}\d\zeta<+\infty.
	\end{align}
	From \eqref{ei22}, we find
	\begin{align}\label{RLTT4}
		&\sup_{\xi\in[\xi^*-1,\xi^*+1]}\|\v(\zeta;\xi,\omega,\v^*)\|^2_{\H}\nonumber\\&\leq\sup_{\xi\in[\xi^*-1,\xi^*+1]}\bigg[\|\v^*\|^2_{\H}+\frac{2}{\sigma^2}\int_{\xi}^{\zeta}\z^2(r,\omega)\|\f(r)\|^2_{\L^2(\R^2)}\d r\bigg]e^{2\sigma\int_{\xi}^{\zeta}| y(\theta_{r}\omega)|\d r}\nonumber\\&\leq\left[\|\v^*\|^2_{\H}+\frac{2}{\sigma^2}\int_{\xi^*-1}^{\zeta}\z^2(r,\omega)\|\f(r)\|^2_{\L^2(\R^2)}\d r\right]e^{2\sigma\int_{\xi^*-1}^{\zeta}| y(\theta_{r}\omega)|\d r},
	\end{align}
	which is finite and the right hand side is independent of $\xi$. It implies from \eqref{RLTT3} that we can find a positive real number $\varepsilon^*_1=\varepsilon^*_1(\varepsilon,\xi^*,\omega,\v^*)$ such that
	\begin{align}\label{RLTT5}
		\int_{\xi}^{\tau}\frac{\d}{\d\zeta}\|\v(\zeta;\xi,\omega,\v^*)\|^2_{\H}\d\zeta\leq \frac{\varepsilon^2}{2},  \ \text{ whenever }\  \xi\in(\xi^*,\xi^*+\varepsilon^*_1), \ \tau\in(\xi,\xi^*+\varepsilon^*_1).
	\end{align}
	Now we estimate the final term on the right hand side of \eqref{RLTT2}. From the fact that $$\v(\cdot;\xi,\omega,\v^*)\in\mathrm{C}([\xi,+\infty);\H)\cap\mathrm{L}^2_{\mathrm{loc}}(\xi,+\infty;\V),$$ and \eqref{RLTT4}, we can find a positive number $K(\omega,\xi^*,\v^*)$ which is independent of $\xi$ such that 
	\begin{align}\label{RLTT6}
		\max_{\zeta\in[\xi^*-1,\xi^*+1]}\|\v(\zeta;\xi,\omega,\v^*)\|^2_{\H}\leq K(\omega,\xi^*,\v^*), \ \text{ for all }\ \xi\in[\xi^*-1,\zeta].
	\end{align}
	Since $\V$ is dense in $\H$, for $\varepsilon>0$ same as in \eqref{RLTT1}, we can find an element $\tilde{\v}\in\V$ such that 	\begin{align}\label{RL}\|\tilde{\v}-\v^*\|_{\H}\leq\frac{\varepsilon^2}{8(K(\omega,\xi^*,\v^*)+\|\v^*\|_{\H})}.\end{align}
	Thus, for $\xi\in(\xi^*,\xi^*+\varepsilon^*_1)$ and $\tau\in(\xi,\xi^*+\varepsilon^*_1)$, in view of \eqref{RLTT6} and \eqref{RL}, we obtain
	\begin{align}\label{RLTT7}
		|(\v(\tau;\xi,\omega,\v^*)-\v^*,\v^*)|&\leq|(\v(\tau;\xi,\omega,\v^*)-\v^*,\tilde{\v})|+|(\v(\tau;\xi,\omega,\v^*)-\v^*,\tilde{\v}-\v^*)|\nonumber\\&\leq|(\v(\tau;\xi,\omega,\v^*)-\v^*,\tilde{\v})|+\frac{\varepsilon^2}{8}.
	\end{align}
	Now, we consider
	\begin{align}\label{RLTT8}
		|(\v(\tau;\xi,\omega,\v^*)-\v^*,\tilde{\v})|&=\left|\left\langle\int_{\xi}^{\tau}\frac{\d}{\d\zeta}\v(\zeta;\xi,\omega,\v^*)\d\zeta,\tilde{\v}\right\rangle\right|\nonumber\\&\leq\|\tilde{\v}\|_{\V}\bigg(\int_{\xi}^{\tau}\bigg\|\frac{\d}{\d\zeta}\v(\zeta;\xi,\omega,\v^*)\bigg\|^2_{\V^*}\d\zeta\bigg)^{1/2}(\tau-\xi)^{1/2}.
	\end{align}
	From the first equation of the system \eqref{CNSE},  for any $\bar{\v}\in\V$, we have 
	\begin{align*}
		&\left|	\left	\langle\frac{\d}{\d\zeta}\v(\zeta;\xi,\omega,\v^*),\bar{\v}\right\rangle\right|\nonumber\\&\leq\nu\|\nabla\v(\zeta;\xi,\omega,\v^*)\|_{\H}\|\nabla\bar{\v}\|_{\H}+\left[\frac{\sigma^2}{2}+2\sigma|y(\theta_{\zeta}\omega)|\right]\|\v(\zeta;\xi,\omega,\v^*)\|_{\H}\|\bar{\v}\|_{\H}\nonumber\\&\quad+\sqrt{2}\z^{-1}(\zeta,\omega)\|\v(\zeta;\xi,\omega,\v^*)\|_{\H}\|\nabla\v(\zeta;\xi,\omega,\v^*)\|_{\H}\|\nabla\bar{\v}\|_{\H}+\z(\zeta,\omega)\|\f(\zeta)\|_{\L^2(\R^2)}\|\bar{\v}\|_{\H},\nonumber\\&\leq C\big[\|\nabla\v(\zeta;\xi,\omega,\v^*)\|_{\H}+\|\v(\zeta;\xi,\omega,\v^*)\|_{\H}+|y(\theta_{\zeta}\omega)|\|\v(\zeta;\xi,\omega,\v^*)\|_{\H}\nonumber\\&\quad+\z^{-1}(\zeta,\omega)\|\v(\zeta;\xi,\omega,\v^*)\|_{\H}\|\nabla\v(\zeta;\xi,\omega,\v^*)\|_{\H}+\z(\zeta,\omega)\|\f(\zeta)\|_{\L^2(\R^2)}\big]\|\bar{\v}\|_{\V},
	\end{align*}
	which gives
	\begin{align}\label{RLTT9}
		\bigg\|\frac{\d}{\d\zeta}\v(\zeta;\xi,\omega,\v^*)\bigg\|^2_{\V^*}&\leq C\big[\|\v(\zeta;\xi,\omega,\v^*)\|^2_{\V}+\z^{-2}(\zeta,\omega)\|\v(\zeta;\xi,\omega,\v^*)\|^2_{\H}\|\v(\zeta;\xi,\omega,\v^*)\|^2_{\V}\nonumber\\&\qquad+|y(\theta_{\zeta}\omega)|^2\|\v(\zeta;\xi,\omega,\v^*)\|^2_{\H}+\z^2(\zeta,\omega)\|\f(\zeta)\|^2_{\L^2(\R^2)}\big].
	\end{align}
	Combining \eqref{RLTT8} and \eqref{RLTT9}, we infer
	\begin{align}\label{RLTT10}
		&	|(\v(\tau;\xi,\omega,\v^*)-\v^*,\tilde{\v})|\nonumber\\&\leq\bigg(\int_{\xi}^{\tau}\big[\|\v(\zeta;\xi,\omega,\v^*)\|^2_{\V}+\z^{-2}(\zeta,\omega)\|\v(\zeta;\xi,\omega,\v^*)\|^2_{\H}\|\v(\zeta;\xi,\omega,\v^*)\|^2_{\V}\nonumber\\&\qquad+|y(\theta_{\zeta}\omega)|^2\|\v(\zeta;\xi,\omega,\v^*)\|^2_{\H}+\z^2(\zeta,\omega)\|\f(\zeta)\|^2_{\L^2(\R^2)}\big]\d\zeta\bigg)^{1/2}\|\tilde{\v}\|_{\V}(\tau-\xi)^{1/2}.
	\end{align}
	From the fact that  $\v(\cdot;\xi,\omega,\v^*)\in\mathrm{C}([\xi,+\infty);\H)\cap\mathrm{L}^2_{\mathrm{loc}}(\xi,+\infty;\V)$, continuity of $\z(\cdot,\omega)$, \eqref{RLTT4} and \eqref{RLTT10}, it follows that for $\varepsilon>0$ same as in \eqref{RLTT1}, we can find a positive real number $\varepsilon^*_2=\varepsilon^*_2(\varepsilon,\xi^*,\omega,\v^*)$ such that
	\begin{align}\label{RLTT11}
		|(\v(\tau;\xi,\omega,\v^*)-\v^*,\tilde{\v})|\leq \frac{\varepsilon^2}{8},  \ \text{ whenever }\  \xi\in(\xi^*,\xi^*+\varepsilon^*_2), \ \tau\in(\xi,\xi^*+\varepsilon^*_2).
	\end{align}
	Taking $\varepsilon^*=\min\{\varepsilon^*_1,\varepsilon^*_2\}$, we obtain \eqref{RLTT1} by combining \eqref{RLTT2}, \eqref{RLTT5}, \eqref{RLTT7} and \eqref{RLTT11}.
\end{proof}
One can prove that, for given $\u^*\in\H$, $t\in\R$ and $\omega\in\Omega$, the mapping $\xi\mapsto\Phi(t-\xi,\xi,\theta_{\xi}\omega,\u^*)$ is left continuous from $(-\infty,t]$ to $\mathbb{H}$ by following the analogous steps as in the proof of Lemma \ref{R_continuity}. 
 We provide the following result on the continuity of the $\mathbb{H}$-valued mapping $\xi\mapsto\Phi(t-\xi,\xi,\theta_{\xi}\omega,\u^*)$ on $(-\infty,t]$.
\begin{lemma}\label{Cbdd}
	For given $\u^*\in\H$, $t\in\R$ and $\omega\in\Omega$, the mapping $\xi\mapsto\Phi(-\xi,t+\xi,\theta_{\xi}\omega,\u^*)$ is continuous from $(-\infty,0]$ to $\mathbb{H}$.
\end{lemma}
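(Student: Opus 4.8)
The plan is to reduce Lemma \ref{Cbdd} to Lemma \ref{R_continuity} and its left-continuous counterpart by a change of variables that uses the flow (group) property $\theta_{a+b}=\theta_a\circ\theta_b$ of the Wiener shift. Concretely, fix $\u^*\in\H$, $t\in\R$, $\omega\in\Omega$, set $\tilde\omega:=\theta_{-t}\omega\in\Omega$, and for $\xi\in(-\infty,0]$ put $s:=t+\xi\in(-\infty,t]$. The map $\xi\mapsto s$ is an increasing homeomorphism of $(-\infty,0]$ onto $(-\infty,t]$. Since $-\xi=t-s$ and $\theta_{\xi}\omega=\theta_{s-t}\omega=\theta_{s}(\theta_{-t}\omega)=\theta_{s}\tilde\omega$, we have
\begin{align*}
	\Phi(-\xi,t+\xi,\theta_{\xi}\omega,\u^*)=\Phi(t-s,s,\theta_{s}\tilde\omega,\u^*),
\end{align*}
which is, after relabelling the base point $\omega$ as $\tilde\omega$ and the dummy variable $\xi$ as $s$, precisely the mapping treated in Lemma \ref{R_continuity}.

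Next I would apply Lemma \ref{R_continuity} with $\omega$ replaced by $\tilde\omega$ — legitimate because that lemma is stated for an arbitrary element of $\Omega$ — to obtain right continuity of $s\mapsto\Phi(t-s,s,\theta_{s}\tilde\omega,\u^*)$ on $(-\infty,t]$. The analogous left-continuity statement, noted immediately after Lemma \ref{R_continuity} and proved by repeating the same estimates, yields left continuity. Together they show that $s\mapsto\Phi(t-s,s,\theta_{s}\tilde\omega,\u^*)$ is continuous on $(-\infty,t]$, and composing with the homeomorphism $\xi\mapsto t+\xi$ then gives continuity of $\xi\mapsto\Phi(-\xi,t+\xi,\theta_{\xi}\omega,\u^*)$ from $(-\infty,0]$ to $\H$, which is the assertion.

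I do not expect a genuine analytic obstacle: the real content — the $\H$-energy bound \eqref{ei22}, the density of $\V$ in $\H$, and the $\V^*$-estimate on $\tfrac{\d}{\d\zeta}\v$ — has already been extracted in the proof of Lemma \ref{R_continuity}, and the present statement is essentially a reformulation obtained by translating the initial and final times. The only point demanding a little care is the metric-dynamical-system bookkeeping: one must check that $\theta$ is a flow (so that $\theta_{\xi}\omega=\theta_s\tilde\omega$ is valid) and that shifting the base point to $\tilde\omega=\theta_{-t}\omega$ leaves all hypotheses intact, which it does since $\tilde\omega\in\Omega$ and Lemma \ref{R_continuity} holds at every point of $\Omega$.
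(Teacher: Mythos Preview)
Your proposal is correct and matches the paper's approach: the paper presents Lemma \ref{Cbdd} as the combined (right plus left) continuity statement obtained from Lemma \ref{R_continuity} and its left-continuous analog, and your change of variables $s=t+\xi$, $\tilde\omega=\theta_{-t}\omega$ is precisely the translation that turns the map in Lemma \ref{Cbdd} into the map $s\mapsto\Phi(t-s,s,\theta_s\tilde\omega,\u^*)$ of Lemma \ref{R_continuity}. The paper leaves this substitution implicit, simply stating the lemma without proof; you have made the bookkeeping explicit, and it is sound.
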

Now combining the results of Lemmas \ref{Continuity} and \ref{Cbdd}, we state the following result:
\begin{lemma}\label{ContinuityBdd}
	For given $\u^*\in\H$, $t\in\R$ and $\omega\in\Omega$, the $\mathbb{H}$-valued mapping $$(\xi,\u^*)\mapsto\Phi(-\xi,t+\xi,\theta_{\xi}\omega,\u^*)$$ is continuous and bounded on $(-\infty,0]\times\H$.
\end{lemma}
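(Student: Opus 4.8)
The plan is to splice together the two continuity properties already at hand — continuity of the cocycle with respect to the initial datum (which is contained in the proof of Lemma \ref{Continuity}, see the estimate \eqref{Conti5}) and continuity with respect to the shift parameter $\xi$ (Lemma \ref{Cbdd}) — by a triangle inequality, keeping the continuity modulus under control as the \emph{initial time} runs through a compact interval. By \eqref{Phi1} and the cocycle identity one has
\[
\Phi(-\xi,t+\xi,\theta_\xi\omega,\u^*)=\u(t;t+\xi,\theta_{-t}\omega,\u^*)=\z^{-1}(t,\theta_{-t}\omega)\,\v\bigl(t;t+\xi,\theta_{-t}\omega,\z(t+\xi,\theta_{-t}\omega)\u^*\bigr),
\]
so everything can be read off from the pathwise system \eqref{CNSE} with the fixed sample $\widetilde\omega:=\theta_{-t}\omega$. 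Let $(\xi_n,\u_n)\to(\xi_0,\u_0)$ in $(-\infty,0]\times\H$ and set $\mathrm{I}_n:=\|\u(t;t+\xi_n,\widetilde\omega,\u_n)-\u(t;t+\xi_n,\widetilde\omega,\u_0)\|_{\H}$ and $\mathrm{II}_n:=\|\u(t;t+\xi_n,\widetilde\omega,\u_0)-\u(t;t+\xi_0,\widetilde\omega,\u_0)\|_{\H}$, so the quantity to be estimated is at most $\mathrm{I}_n+\mathrm{II}_n$. Here $\mathrm{II}_n\to 0$ is exactly the content of Lemma \ref{Cbdd}.

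For $\mathrm{I}_n$ I would apply the continuous-dependence estimate \eqref{Conti5} to the two solutions of \eqref{CNSE} that share the initial time $t+\xi_n$ and the sample $\widetilde\omega$, but start from $\z(t+\xi_n,\widetilde\omega)\u_n$ and $\z(t+\xi_n,\widetilde\omega)\u_0$, which yields
\[
\mathrm{I}_n\le \z^{-1}(t,\widetilde\omega)\,\z(t+\xi_n,\widetilde\omega)\,\bigl[\rho_5\bigl(t,t+\xi_n,\widetilde\omega,\z(t+\xi_n,\widetilde\omega)\u_0,\f\bigr)\bigr]^{1/2}\,\|\u_n-\u_0\|_{\H}.
\]
Since $\xi_n\to\xi_0$, eventually $\xi_n\in[\xi_0-1,0]$; on this compact interval $\xi\mapsto\z(t+\xi,\widetilde\omega)$ is bounded, $\|\z(t+\xi,\widetilde\omega)\u_0\|_{\H}$ is bounded, and — inspecting \eqref{ei2} and \eqref{Conti5} and substituting the Gronwall bound \eqref{ei22} for the factor $\|\v(\cdot)\|^2_{\H}$ occurring inside $\rho$ — the map $\xi\mapsto\rho_5\bigl(t,t+\xi,\widetilde\omega,\z(t+\xi,\widetilde\omega)\u_0,\f\bigr)$ is continuous, hence bounded, on $[\xi_0-1,0]$. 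Consequently $\mathrm{I}_n\le C(t,\omega,\u_0,\xi_0)\,\|\u_n-\u_0\|_{\H}\to 0$, and joint continuity on $(-\infty,0]\times\H$ follows.

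For the boundedness I would show that the map carries bounded subsets of $(-\infty,0]\times\H$ into bounded subsets of $\H$. A bounded subset of $(-\infty,0]$ lies in some $[a,0]$ with $a<0$, so for $\xi\in[a,0]$ and $\u^*$ in a ball $B\subset\H$ the a priori estimate \eqref{ei22} (equivalently, the random variable $\rho$ of Lemma \ref{lem3.6}) applied on $[t+\xi,t]\subseteq[t+a,t]$ gives $\|\v(t;t+\xi,\widetilde\omega,\z(t+\xi,\widetilde\omega)\u^*)\|_{\H}^2\le C(t,\omega,a,B)$, hence $\|\Phi(-\xi,t+\xi,\theta_\xi\omega,\u^*)\|_{\H}\le C(t,\omega,a,B)$; the bound may even be made uniform over all $\xi\le 0$ by invoking the dissipativity estimate of Lemma \ref{LemmaUe} for the (tempered) family of balls generated by $B$. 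Together with the continuity established above, this finishes the proof. The only mildly delicate point — and the one I would write out carefully — is the boundedness of $\rho_5$ from \eqref{Conti5} as the initial time $t+\xi$ ranges over $[\xi_0-1,0]$: because the definition of $\rho$ in \eqref{ei2} still carries the term $\int_{\tau}^{t}2\sigma|y(\theta_\zeta\widetilde\omega)|\,\|\v(\zeta)\|^2_{\H}\,\d\zeta$, one must first eliminate $\|\v(\zeta)\|^2_{\H}$ via \eqref{ei22} so that all quantities are expressed through $\|\v_\tau\|_{\H}$, $\z$, $y$ and $\f$ restricted to $[\tau,t]$, each depending continuously and boundedly on $\tau=t+\xi$ over the compact range; once this bookkeeping is done the remainder is routine.
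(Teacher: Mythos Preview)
Your proposal is correct and follows exactly the approach the paper indicates: the paper does not give an explicit proof of this lemma but simply states it as a consequence of ``combining the results of Lemmas \ref{Continuity} and \ref{Cbdd}'', and your triangle-inequality splitting $\mathrm{I}_n+\mathrm{II}_n$ is the natural way to make that combination precise. The care you take with the uniform boundedness of $\rho_5$ over the compact range of initial times (via substituting \eqref{ei22} into \eqref{ei2}) is exactly the bookkeeping the paper suppresses.
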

In light of the abstract result on invariant sample measures which was established in \cite[Theorem 3.1]{CY}, we can say (using Theorem \ref{MainTheoRA} and Lemma \ref{ContinuityBdd}) that the 2D SNSE \eqref{SNSE} has a family of invariant sample measures $\{\uprho_{t,\omega}\}_{t\in\R,\omega\in\Omega}$ for $\Phi$ and is supported on $\mathscr{A}(t,\omega)$. Therefore we have the following result:
\begin{theorem}\label{ISM}
	Assume that Hypothesis \ref{Hyp-f} holds and let $\Phi$ be the non-autonomous random DS generated by the 2D SNSE \eqref{SNSE} over the metric DS $(\Omega,\mathscr{F},\mathbb{P},\{\theta_{t}\}_{t\in\R})$ with the state space $\H$. Then for a given generalized Banach limit $\underset{t\to\infty}{\rm LIM}$ and a given continuous function $\upsilon(\cdot):\R\to\H$ satisfying $\{\upsilon(\tau)\}_{\tau\in\R}\in\mathfrak{D}$, there exists a family of Borel probability measures $\{\uprho_{\tau,\omega}\}_{\tau\in\R,\omega\in\Omega}$ such that $\uprho_{\tau,\omega}$ is an invariant sample measure for $\Phi$ and is supported on $\mathscr{A}(\tau,\omega)$, and 
	\begin{align}\label{ISM1}
		&\underset{t\to-\infty}{\rm LIM}\frac{1}{-t}\int_{t}^{0}\varphi(\Phi(-\xi,\tau+\xi,\theta_{\xi}\omega,\upsilon(\tau+\xi)))\d\xi\nonumber\\&=\int_{\H}\varphi(\u)\uprho_{\tau,\omega}(\d\u)=\int_{\mathscr{A}(\tau,\omega)}\varphi(\u)\uprho_{\tau,\omega}(\d\u)\nonumber\\&=\underset{t\to-\infty}{\rm LIM}\frac{1}{-t}\int_{t}^{0}\left[\int_{\mathscr{A}(\tau+\xi,\theta_{\xi}\omega)}\varphi(\Phi(-\xi,\tau+\xi,\theta_{\xi}\omega,\u))\uprho_{\tau+\xi,\omega}(\d\u)\right]\d\xi,
	\end{align}
	for any real-valued continuous functional $\varphi$ on $\H$. 
	For all $\omega\in\Omega$, $\uprho_{\theta_{t}\omega}$ is invariant with respect to the non-autonomous random DS $\Phi$ in the sense that 
	\begin{align}\label{ISM2}
		\int_{\mathscr{A}(t,\omega)}\varphi(\u)\uprho_{\theta_{t}\omega}(\d\u)=\int_{\mathscr{A}(\tau,\theta_{-(t-\tau)}\omega)}\varphi(\Phi(t-\tau,\tau,\theta_{\tau-t}\omega,\u))\uprho_{\tau,\theta_{-(t-\tau)}\omega}(\d\u), \text{ for all } t\geq \tau.
	\end{align}
\end{theorem}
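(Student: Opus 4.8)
The plan is to invoke the abstract theorem \cite[Theorem 3.1]{CY} on the existence of invariant sample measures, whose hypotheses are precisely (a) that the non-autonomous random cocycle $\Phi$ possesses a unique $\mathfrak{D}$-pullback random attractor, and (b) that for every fixed $\u^*\in\H$, $t\in\R$, $\omega\in\Omega$ the map $(\xi,\u^*)\mapsto\Phi(-\xi,t+\xi,\theta_{\xi}\omega,\u^*)$ is continuous and bounded on $(-\infty,0]\times\H$. Condition (a) is exactly Theorem \ref{MainTheoRA}, and condition (b) is exactly Lemma \ref{ContinuityBdd}. So the first two displays of \eqref{ISM1} and the support statement on $\mathscr{A}(\tau,\omega)$ are immediate transcriptions of the conclusion of \cite[Theorem 3.1]{CY} in our notation; the construction is: fix the generalized Banach limit $\underset{t\to+\infty}{\mathrm{LIM}}$ and the test trajectory $\upsilon$, and define $\uprho_{\tau,\omega}$ via the Banach limit of time averages along $\Phi(-\xi,\tau+\xi,\theta_{\xi}\omega,\upsilon(\tau+\xi))$, with the limit being independent of the choice of $\upsilon\in\mathfrak{D}$ because all such trajectories are pulled back into the common absorbing set $\mathcal K$ of Lemma \ref{PA1} and hence into $\mathscr A$.

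Next I would establish the cocycle/invariance identity \eqref{ISM2}. The key is the cocycle property $\Phi(t-\tau,\tau,\theta_{\tau-t}\omega,\cdot)\circ\Phi(s,\tau-s,\ldots)=\Phi(\cdot)$ together with the invariance of the attractor, $\Phi(t-\tau,\tau,\theta_{\tau-t}\omega,\mathscr A(\tau,\theta_{\tau-t}\omega))=\mathscr A(t,\omega)$. Using these two facts, a change of variable $\xi\mapsto\xi+(t-\tau)$ inside the Banach-limit time average that defines $\uprho_{t,\omega}$ converts it into the corresponding average defining $\uprho_{\tau,\theta_{\tau-t}\omega}$ composed with $\Phi(t-\tau,\tau,\theta_{\tau-t}\omega,\cdot)$; since the generalized Banach limit is translation-shift insensitive on bounded functions (the difference between the two averages is $O(1/|t|)$ times a bounded quantity, hence has ordinary limit $0$ and is annihilated by $\mathrm{LIM}$), the two measures push forward into one another, which is \eqref{ISM2}. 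The third equality in \eqref{ISM1} is then obtained by the same shift-of-variable argument applied inside the defining time average: replacing $\Phi(-\xi,\tau+\xi,\theta_{\xi}\omega,\upsilon(\tau+\xi))$ using the cocycle identity and pushing the inner average onto $\mathscr A(\tau+\xi,\theta_{\xi}\omega)$ via the already-constructed family $\{\uprho_{\tau+\xi,\omega}\}$, and again using that $\mathrm{LIM}$ ignores the bounded $O(1/|t|)$ discrepancy.

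For the measurability and $\mathscr F$-family structure — i.e. that $(\tau,\omega)\mapsto\uprho_{\tau,\omega}$ really is an invariant sample measure in the sense of Definition \ref{D-ISM} — I would note that the integrand $\xi\mapsto\varphi(\Phi(-\xi,\tau+\xi,\theta_{\xi}\omega,\upsilon(\tau+\xi)))$ is continuous in $\xi$ by Lemma \ref{ContinuityBdd} and measurable in $\omega$ by Lemma \ref{LusinC} (restricted to each Polish $\Omega_N$), so the time averages are jointly measurable and their generalized Banach limit defines an $\mathscr F$-measurable family of Borel probability measures; tightness/support on $\mathscr A(\tau,\omega)$ follows because for large $-\xi$ the trajectory lies in the compact set $\mathscr A(\tau,\omega)$ up to arbitrarily small error (pullback attraction), so the Banach-limit measure charges only $\mathscr A(\tau,\omega)$.

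\textbf{Main obstacle.} The genuinely delicate point is not the abstract citation but verifying that the specific structural hypotheses of \cite[Theorem 3.1]{CY} hold for $\Phi$ on the \emph{whole space} $\R^2$: one must be sure the continuity-and-boundedness statement of Lemma \ref{ContinuityBdd} holds on all of $(-\infty,0]\times\H$ (not merely on bounded $\xi$-intervals) and that the random absorbing estimate of Lemma \ref{LemmaUe} gives the uniform-in-$\xi$ bound needed for the Banach limit of the time averages to be well defined along any $\upsilon\in\mathfrak D$. These inputs are in place from Section \ref{sec3}, so the remaining work in \eqref{ISM1}–\eqref{ISM2} is the bookkeeping of cocycle shifts under $\mathrm{LIM}$ described above.
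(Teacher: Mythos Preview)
Your proposal is correct and matches the paper's approach exactly: the paper states just before Theorem \ref{ISM} that the result follows from \cite[Theorem 3.1]{CY} once its hypotheses are verified via Theorem \ref{MainTheoRA} (existence of the $\mathfrak{D}$-pullback random attractor) and Lemma \ref{ContinuityBdd} (continuity and boundedness of $(\xi,\u^*)\mapsto\Phi(-\xi,t+\xi,\theta_{\xi}\omega,\u^*)$). Your additional sketches of how \eqref{ISM2} and the third equality in \eqref{ISM1} arise via cocycle shifts under $\mathrm{LIM}$ are essentially a partial re-derivation of what the cited abstract theorem already delivers, so they are not needed here but do no harm.
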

\subsection{A stochastic Liouville type theorem} 
Our next aim is to study \emph{a stochastic Liouville type theorem} for the 2D non-autonomous SNSE \eqref{SNSE}. For this purpose, we require the definition of a class of test functions. Let us rewrite the first equation of the system \eqref{SNSE} as 
\begin{align}
	\d\u=L(\u,t)\d t+\sigma\u\d\W=\left[-\nu \A\u-\B(\u)+\mathcal{P}\f\right]\d t +\sigma\u\d\W.
\end{align} 
\subsubsection{Class of test function and its properties} Let us define the class $\mathscr{J}$ of cylindrical test functions $\Lambda:\H\to\R$ of the form 
\begin{align*}
	\Lambda(\u):=\psi((\u,e_1),(\u,e_2),\ldots,(\u,e_m)),
\end{align*}
where $\psi$ is a two times differentiable scalar valued function with compact support defined on $\R^m$, $m\in\N$, and $\{e_1,e_2,\ldots,e_m\}\subset\mathcal{V}_2:=\{\u\in\mathrm{C}^2_{c}(\R^2;\R^2):\nabla\cdot\u=0\}$, where $\mathrm{C}^2_{c}(\R^2;\R^2)$ is two times differentiable $\R^2$-valued function with compact support (see \cite{CR} for detailed information of such types of test functions). 
Moreover, the chain rule of differential calculus gives
\begin{align}
	\Lambda'(\u)&=\sum_{i=1}^{m}\partial_{i}\psi((\u,e_1),(\u,e_2),\ldots,(\u,e_m))e_{i},\\
	\Lambda''(\u)&=\sum_{i,j=1}^{m}\partial_{i,j}\psi((\u,e_1),(\u,e_2),\ldots,(\u,e_m)){e_{i}\cdot e_j},
\end{align}
where $\partial_{i}\psi$ and $\partial_{i,j}\psi$ are the first-order and second-order derivatives of $\psi$ with respect to $i$-th variable, and $i$-th and $j$-th variables, respectively.
Now, let $\u(\cdot)$ be  the solution of \eqref{SNSE}. Then, we infer from It\^o's formula (cf. \cite[Theorem 2.4]{ZBNJ}) that for $t\geq\tau$ and $\omega\in\Omega$,
\begin{align}\label{ItoF}
	\Lambda(\u(t))-\Lambda(\u(\tau))&=\int_{\tau}^{t}\left(\Lambda'(\u(\xi)),\sigma\u(\xi)\right)\d\W(\xi)+\int_{\tau}^{t}\left\langle\Lambda'(\u(\xi)),L(\u(\xi),\xi)\right\rangle\d\xi\nonumber\\&\quad+\frac{1}{2}\int_{\tau}^{t}\Lambda''(\u(\xi))(\sigma\u(\xi))(\sigma\u(\xi))\d\xi.
\end{align}
Next lemma is useful to prove that the family of invariant sample measures obtained in Theorem \ref{ISM} satisfies a stochastic Liouville type theorem.
\begin{lemma}\label{G-Con}
	Let $\Lambda\in\mathscr{J}$, $\u\in\H$ and $\f\in\L^2(\R^2)$, then the mappings $\mathrm{G}_i:\H\to\R$ (for $i\in\{1,2,3\}$) given by 
	\begin{align*}
		\mathrm{G}_1(\u)=\left\langle L(\u,t),\Lambda'(\u)\right\rangle, \ \  \mathrm{G}_2(\u)=\big(\sigma\u,\Lambda'(\u)\big) \ \text{ and } \ \mathrm{G}_3(\u)=\Lambda''(\u)(\sigma\u)(\sigma\u),
	\end{align*}
are well-defined and continuous in $\H$.
\end{lemma}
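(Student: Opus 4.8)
The plan is to exploit the cylindrical structure of $\Lambda\in\mathscr{J}$ to reduce each $\mathrm{G}_k$ to a finite linear combination of elementary functionals of $\u\in\H$, with coefficients that are themselves bounded and continuous in $\u$. Writing $a_i(\u):=\partial_i\psi((\u,e_1),\ldots,(\u,e_m))$ and $a_{ij}(\u):=\partial_{i,j}\psi((\u,e_1),\ldots,(\u,e_m))$, so that $\Lambda'(\u)=\sum_{i=1}^m a_i(\u)e_i$ and $\Lambda''(\u)(\h)(\k)=\sum_{i,j=1}^m a_{ij}(\u)(\h,e_i)(\k,e_j)$, I would first record that since $\psi\in\mathrm{C}^2_c(\R^m)$ its derivatives $\partial_i\psi$ and $\partial_{i,j}\psi$ are bounded and uniformly continuous on $\R^m$, while each map $\u\mapsto(\u,e_k)$ is Lipschitz on $\H$ by the Cauchy--Schwarz inequality (as $e_k\in\mathcal{V}_2\subset\H$); hence $\u\mapsto a_i(\u)$ and $\u\mapsto a_{ij}(\u)$ are bounded and continuous on $\H$. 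It therefore suffices to establish well-definedness and continuity on $\H$ of the ``building blocks'' $\u\mapsto(\sigma\u,e_i)$, $\u\mapsto(\u,e_i)(\u,e_j)$ and $\u\mapsto\langle L(\u,t),e_i\rangle$.

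The terms $\mathrm{G}_2$ and $\mathrm{G}_3$ are then immediate: $\mathrm{G}_2(\u)=\sigma\sum_i a_i(\u)(\u,e_i)$ and $\mathrm{G}_3(\u)=\sigma^2\sum_{i,j}a_{ij}(\u)(\u,e_i)(\u,e_j)$, and the building blocks $(\u,e_i)$ and $(\u,e_i)(\u,e_j)$ are plainly finite for $\u\in\H$ and continuous on $\H$; combined with continuity of the $a_i,a_{ij}$ this yields continuity of $\mathrm{G}_2$ and $\mathrm{G}_3$.

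For $\mathrm{G}_1(\u)=\sum_i a_i(\u)\langle L(\u,t),e_i\rangle$, I would split $\langle L(\u,t),e_i\rangle=-\nu\langle\A\u,e_i\rangle-\langle\B(\u),e_i\rangle+(\mathcal{P}\f,e_i)$ and treat the three pieces separately. For the Stokes piece, since $e_i\in\mathrm{C}^2_c(\R^2;\R^2)$ we have $\A e_i=-\Delta e_i\in\L^2(\R^2)$, so self-adjointness of $\A$ gives $\langle\A\u,e_i\rangle=(\u,\A e_i)$, well-defined and Lipschitz in $\u\in\H$. For the forcing piece, $\nabla\cdot e_i=0$ gives $\mathcal{P}e_i=e_i$, hence $(\mathcal{P}\f,e_i)=(\f,e_i)$ is a finite constant. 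For the nonlinear piece, the key point is that the natural extension to $\u\in\H$ (consistent with \eqref{b0} and the weak formulation in Definition \ref{GWS}) is $\langle\B(\u),e_i\rangle:=-b(\u,e_i,\u)=-\int_{\R^2}(\u(x)\cdot\nabla)e_i(x)\cdot\u(x)\,\d x$, which transfers the derivative onto the smooth test field; since $\nabla e_i\in\L^\infty(\R^2)$ (with compact support), this yields $|\langle\B(\u),e_i\rangle|\le C\|\nabla e_i\|_{\L^\infty(\R^2)}\|\u\|_{\H}^2$, so the term is well-defined on all of $\H$ without any $\V$-regularity. For continuity, given $\u_n\to\u$ in $\H$ I would write $b(\u_n,e_i,\u_n)-b(\u,e_i,\u)=b(\u_n-\u,e_i,\u_n)+b(\u,e_i,\u_n-\u)$ and bound each summand by $C\|\nabla e_i\|_{\L^\infty(\R^2)}\|\u_n-\u\|_{\H}(\|\u_n\|_{\H}+\|\u\|_{\H})\to0$, using boundedness of $\{\u_n\}$ in $\H$. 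Assembling the three pieces, $\u\mapsto\langle L(\u,t),e_i\rangle$ is continuous on $\H$, and multiplication by the bounded continuous $a_i(\u)$ followed by the finite sum over $i$ completes the argument.

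The only real obstacle is making sense of and bounding the nonlinear contribution $\langle\B(\u),e_i\rangle$ for a merely $\L^2$ field $\u$; it is resolved by the transposed representation $-b(\u,e_i,\u)$, which moves the derivative onto the smooth compactly supported $e_i$ and replaces any appeal to the $\V$-estimate \eqref{b1} by an elementary $\L^\infty$--$\L^2$--$\L^2$ H\"older bound. All remaining steps are routine consequences of the cylindrical structure and the Cauchy--Schwarz inequality.
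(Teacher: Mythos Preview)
Your proof is correct and follows essentially the same approach as the paper: both transpose the Stokes and nonlinear terms onto the smooth compactly supported test fields, obtaining $\langle\A\u,e_i\rangle=(\u,\A e_i)$ and $\langle\B(\u),e_i\rangle=-b(\u,e_i,\u)$, and then use the $\L^{\infty}$ bound on $\nabla e_i$ (resp.\ $\nabla\Lambda'(\u)$) together with an $\L^{2}$--$\L^{\infty}$--$\L^{2}$ H\"older estimate to control everything in terms of $\|\u\|_{\H}$. The only difference is organizational: you decompose $\Lambda'(\u)=\sum_i a_i(\u)e_i$ and treat the bounded continuous coefficients $a_i(\u)$ separately from the building blocks, whereas the paper keeps $\Lambda'(\u)$ as a single element of $\mathcal{V}_2$ and invokes continuity of $\u\mapsto\Lambda'(\u)$ in the $\mathrm{C}^2_c$-type norms; the analytical content is identical.
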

\begin{proof}
	Since $\{e_1,\ldots,e_m\}\subset\mathcal{V}_2,$ we have $\Lambda'(\u),\ \Lambda''(\u)\in\mathcal{V}_2$. Integration by parts gives 
	\begin{align*}
		\mathrm{G}_1(\u)&=\left\langle L(\u,t),\Lambda'(\u)\right\rangle\nonumber\\&=\left\langle-\nu \A\u-\B(\u)+\mathcal{P}\f ,\Lambda'(\u)\right\rangle\nonumber\\&=\nu\left(\u,\Delta\Lambda'(\u)\right)+b(\u,\Lambda'(\u),\u)+\left(\f ,\Lambda'(\u)\right)\nonumber\\&\leq \nu\|\u\|_{\H}\|\Delta\Lambda'(\u)\|_{\L^2(\R^2)}+\|\nabla\Lambda'(\u)\|_{\L^{\infty}(\R^2)}\|\u\|^2_{\H}+\|\f\|_{\L^2(\R^2)}\|\Lambda'(\u)\|_{\H}<\infty.
	\end{align*}
Thus, $\mathrm{G}_1(\u)$ is well-defined. Now consider
\begin{align}
	&|\mathrm{G}_1(\u_1)-\mathrm{G}_1(\u_2)|\nonumber\\&=\left\langle-\nu \A\u_1-\B(\u_1)+\mathcal{P}\f ,\Lambda'(\u_1)\right\rangle-\left\langle-\nu \A\u_2-\B(\u_2)+\mathcal{P}\f ,\Lambda'(\u_2)\right\rangle\nonumber\\&=-\nu\left\langle\A\u_1-\A\u_2,\Lambda'(\u_1)\right\rangle-\nu\left\langle\A\u_2,\Lambda'(\u_1)-\Lambda'(\u_2)\right\rangle -\left\langle\B(\u_1)-\B(\u_2),\Lambda'(\u_1)\right\rangle\nonumber\\&\quad-\left\langle\B(\u_2),\Lambda'(\u_1)-\Lambda'(\u_2)\right\rangle+\left(\mathcal{P}\f,\Lambda'(\u_1)-\Lambda'(\u_2)\right)\nonumber\\&=\nu\left(\u_1-\u_2,\Delta\Lambda'(\u_1)\right)+\nu\left(\u_2,\Delta\Lambda'(\u_1)-\Delta\Lambda'(\u_2)\right)+b(\u_1-\u_2,\Lambda'(\u_1),\u_1)\nonumber\\&\quad+b(\u_1,\Lambda'(\u_1),\u_1-\u_2)+b(\u_2,\Lambda'(\u_1)-\Lambda'(\u_2),\u_2)+\left(\f,\Lambda'(\u_1)-\Lambda'(\u_2)\right)\nonumber\\&\leq \nu\|\u_1-\u_2\|_{\H}\|\Delta\Lambda'(\u_1)\|_{\H}+\nu\|\u_2\|_{\H}\|\Delta(\Lambda'(\u_1)-\Lambda'(\u_2))\|_{\H}\nonumber\\&\quad+\|\u_1-\u_2\|_{\H}\|\nabla\Lambda'(\u_1)\|_{\L^{\infty}(\R^2)}\|\u_1\|_{\H}+\|\u_1\|_{\H}\|\nabla\Lambda'(\u_1)\|_{\L^{\infty}(\R^2)}\|\u_1-\u_2\|_{\H}\nonumber\\&\quad+\|\u_2\|_{\H}\|\nabla(\Lambda'(\u_1)-\Lambda'(\u_2))\|_{\L^{\infty}(\R^2)}\|\u_2\|_{\H}+\|\f\|_{\L^2(\R^2)}\|\Lambda'(\u_1)-\Lambda'(\u_2)\|_{\H}.
\end{align}
Then by the continuity and boundedness of $\Lambda'(\cdot)\in\mathcal{V}_2$, we conclude that the mapping $\u\mapsto\left\langle L(\u,t),\Lambda'(\u)\right\rangle$ is continuous in $\H$. Similarly, one can obtain that the mappings $\u\mapsto\big(\sigma\u,\Lambda'(\u)\big)$  and $\u\mapsto\Lambda''(\u)(\sigma\u)(\sigma\u)$ are well-defined and continuous on $\H$.
\end{proof}
Next, we provide a stochastic Liouville type theorem for the 2D non-autonomous SNSE \eqref{SNSE} on the whole space.
\begin{theorem}\label{SLTT}
	Let the Hypothesis \ref{Hyp-f} hold. Then, the family of invariant sample measures obtained in Theorem \ref{ISM} satisfies a stochastic Liouville type equation, that is, for all $t\geq\tau$ and $\omega\in\Omega$,
	\begin{align}\label{SLTT1}
		&\int_{\mathscr{A}(t,\omega)}\Lambda(\u^*)\uprho_{t,\omega}(\d\u^*)-\int_{\mathscr{A}(\tau,\theta_{-(t-\tau)}\omega)}\Lambda(\u^*)\uprho_{\tau,\theta_{-(t-\tau)}\omega}(\d\u^*)\nonumber\\&=\int_{\tau}^{t}\int_{\H}\left(\Lambda'(\u^*),\sigma\u^*\right)\uprho_{\xi,\theta_{-(t-\xi)}\omega}(\d\u^*)\d\widetilde{\W}(\xi)+\int_{\tau}^{t}\int_{\H}\left\langle\Lambda'(\u^*),L(\u^*,\xi)\right\rangle\uprho_{\xi,\theta_{-(t-\xi)}\omega}(\d\u^*)\d \xi \nonumber\\&\quad+\frac{1}{2}\int_{\tau}^{t}\int_{\H}\left(\Lambda''(\u^*)(\sigma\u^*)(\sigma\u^*)\right)\uprho_{\xi,\theta_{-(t-\xi)}\omega}(\d\u^*)\d \xi,
	\end{align}
where $\sigma>0$, $\Lambda\in\mathscr{J}$, and $\widetilde{\W}(\xi)=\W(-t+\xi)-\W(-\xi)$ which is also a Brownian motion.
\end{theorem}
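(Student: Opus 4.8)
The plan is to combine the invariance property \eqref{ISM2} of the invariant sample measures with the It\^o formula \eqref{ItoF}, and then to interchange the spatial integral against $\uprho$ with the temporal (Lebesgue and It\^o) integrals. Fix $t\geq\tau$ and $\omega\in\Omega$. The first step is to note, directly from the cocycle identity \eqref{Phi1}, that $\Phi(\xi-\tau,\tau,\theta_{\tau-t}\omega,\cdot)=\u(\xi;\tau,\theta_{-t}\omega,\cdot)$ for every $\xi\in[\tau,t]$; inserting this into \eqref{ISM2} (applied with final time $\xi$ and base sample point $\theta_{-(t-\xi)}\omega$, whose relevant shifts all collapse to $\theta_{\tau-t}\omega$) gives, for every real-valued continuous functional $\varphi$ on $\H$,
$$\int_{\H}\varphi(\u)\,\uprho_{\xi,\theta_{-(t-\xi)}\omega}(\d\u)=\int_{\H}\varphi\big(\u(\xi;\tau,\theta_{-t}\omega,\u^*)\big)\,\uprho_{\tau,\theta_{-(t-\tau)}\omega}(\d\u^*),\qquad \xi\in[\tau,t].$$
By Lemma \ref{G-Con} the functionals $\mathrm{G}_1(\u)=\langle L(\u,\xi),\Lambda'(\u)\rangle$, $\mathrm{G}_2(\u)=(\sigma\u,\Lambda'(\u))$ and $\mathrm{G}_3(\u)=\Lambda''(\u)(\sigma\u)(\sigma\u)$ are continuous on $\H$, so the displayed identity is available for $\varphi\in\{\Lambda,\mathrm{G}_1,\mathrm{G}_2,\mathrm{G}_3\}$; this is the only use of Lemma \ref{G-Con}.

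The second step is to apply the It\^o formula \eqref{ItoF} with base sample point $\theta_{-t}\omega$ and initial datum $\u^*\in\H$ on $[\tau,t]$, namely
$$\Lambda\big(\u(t;\tau,\theta_{-t}\omega,\u^*)\big)-\Lambda(\u^*)=\int_{\tau}^{t}\mathrm{G}_2\big(\u(\xi;\tau,\theta_{-t}\omega,\u^*)\big)\d\widetilde{\W}(\xi)+\int_{\tau}^{t}\Big(\mathrm{G}_1+\tfrac12\mathrm{G}_3\Big)\big(\u(\xi;\tau,\theta_{-t}\omega,\u^*)\big)\d\xi,$$
where $\widetilde{\W}$ is the Wiener process associated with $\theta_{-t}\omega$. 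I would then integrate this identity in $\u^*$ against $\uprho_{\tau,\theta_{-(t-\tau)}\omega}$, interchange the spatial integral with the two Lebesgue integrals (Fubini) and with the It\^o integral (stochastic Fubini), and finally invoke the first step twice: once to rewrite $\int_{\H}\Lambda(\u(t;\tau,\theta_{-t}\omega,\u^*))\,\uprho_{\tau,\theta_{-(t-\tau)}\omega}(\d\u^*)$ as $\int_{\mathscr{A}(t,\omega)}\Lambda(\u)\,\uprho_{t,\omega}(\d\u)$, and then to rewrite each $\int_{\H}\mathrm{G}_i(\u(\xi;\tau,\theta_{-t}\omega,\u^*))\,\uprho_{\tau,\theta_{-(t-\tau)}\omega}(\d\u^*)$ as $\int_{\H}\mathrm{G}_i(\u^*)\,\uprho_{\xi,\theta_{-(t-\xi)}\omega}(\d\u^*)$, using that the $\uprho$'s are supported on the attractor to put the domains in the form $\mathscr{A}$. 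Comparing the resulting terms with \eqref{SLTT1} finishes the proof, since $\mathrm{G}_2(\u^*)=(\Lambda'(\u^*),\sigma\u^*)$, $\mathrm{G}_1(\u^*)=\langle\Lambda'(\u^*),L(\u^*,\xi)\rangle$ and $\mathrm{G}_3(\u^*)=\Lambda''(\u^*)(\sigma\u^*)(\sigma\u^*)$.

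The integrability needed for the two exchange steps is routine: $\psi$ has compact support, so $\Lambda',\Lambda''\in\mathcal{V}_2$ are bounded, $\uprho_{\tau,\theta_{-(t-\tau)}\omega}$ is carried by the $\V$-bounded attractor $\mathscr{A}(\tau,\theta_{-(t-\tau)}\omega)$, and Lemma \ref{lem3.6} bounds $\sup_{\xi\in[\tau,t]}\|\u(\xi;\tau,\theta_{-t}\omega,\u^*)\|_{\H}$ uniformly for $\u^*$ in a bounded set; hence every integrand $\mathrm{G}_i(\u(\xi;\tau,\theta_{-t}\omega,\u^*))$ is bounded in $(\xi,\u^*)$, in particular square integrable. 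The main obstacle is the \textbf{stochastic Fubini step}, because $\uprho_{\tau,\theta_{-(t-\tau)}\omega}$ is itself a \emph{random} measure, so the textbook stochastic Fubini theorem does not apply verbatim. The remedy I have in mind exploits the pullback construction of invariant sample measures: $\uprho_{\tau,\theta_{-(t-\tau)}\omega}$ is measurable with respect to the increments of $\omega$ on $(-\infty,\tau-t]$, while $\widetilde{\W}$ restricted to $[\tau,t]$ is generated by the increments of $\omega$ on $[\tau-t,0]$; these time sets meet only in the null set $\{\tau-t\}$, so $\uprho_{\tau,\theta_{-(t-\tau)}\omega}$ and $\widetilde{\W}|_{[\tau,t]}$ are independent. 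Conditioning on the $\sigma$-algebra generated by $\omega|_{(-\infty,\tau-t]}$ then freezes $\uprho_{\tau,\theta_{-(t-\tau)}\omega}$ into a deterministic finite measure while keeping $\widetilde{\W}$ a Brownian motion, so the classical stochastic Fubini theorem applies conditionally and hence $\mathbb{P}$-a.s.; the same independence shows that $\xi\mapsto\int_{\H}\mathrm{G}_2(\u(\xi;\tau,\theta_{-t}\omega,\u^*))\,\uprho_{\tau,\theta_{-(t-\tau)}\omega}(\d\u^*)$ is adapted to the independently enlarged filtration of $\widetilde{\W}$, which is what makes the It\^o integral on the right-hand side of \eqref{SLTT1} meaningful.

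Assembling these three ingredients—the flow/invariance identities of the first step, the It\^o expansion of the second step, and the Fubini interchanges justified above—reproduces \eqref{SLTT1} for every $t\geq\tau$, $\omega\in\Omega$ and $\Lambda\in\mathscr{J}$; the case $t=\tau$ is trivial since both sides vanish. The only genuinely delicate point is the random-measure stochastic Fubini, and everything else is bookkeeping with the cocycle and metric-dynamical-system shifts together with the continuity statement of Lemma \ref{G-Con} and the a priori bounds of Lemma \ref{lem3.6}.
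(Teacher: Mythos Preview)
Your argument is correct and arrives at \eqref{SLTT1} by the same three ingredients the paper uses—the It\^o expansion \eqref{ItoF}, a Fubini/stochastic-Fubini interchange, and the invariance of the sample measures—but your route is noticeably more direct. The paper does not start from \eqref{ISM2}; instead it returns to the generalized Banach limit representation \eqref{ISM1}, applies the It\^o formula to $\u(\cdot;\tau+r,\theta_{-t}\omega,\u^*)$ on $[\tau,t]$ for each $r\le 0$, averages in $r$ and takes $\mathrm{LIM}_{s\to-\infty}$, then uses the cocycle identity $\u(\xi;\tau+r,\theta_{-t}\omega,\u^*)=\Phi(\xi-\tau,\tau,\theta_{-(t-\tau)}\omega,\Phi(-r,\tau+r,\theta_{-(t-\tau-r)}\omega,\u^*))$ together with Definition~\ref{D-ISM} to remove the $r$-dependence, at which point the Banach limit collapses and a final application of invariance yields \eqref{SLTT1}. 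Your proof skips this detour by invoking \eqref{ISM2} at the outset, which is cleaner; what the paper's route buys is that it stays closer to the original construction of $\uprho_{\tau,\omega}$, so no separate appeal to \eqref{ISM2} is needed. On the stochastic Fubini step you are actually more careful than the paper: the interchange of $\int_{\H}\,\uprho_{\tau,\theta_{-(t-\tau)}\omega}(\d\u^*)$ with $\int_\tau^t\cdot\,\d\widetilde{\W}(\xi)$ against a random measure appears in both arguments, but the paper performs it without comment, whereas your past/future independence justification (the attractor and hence $\uprho_{\tau,\theta_{-(t-\tau)}\omega}$ are $\mathscr{F}^-$-measurable while $\widetilde{\W}|_{[\tau,t]}$ lives in $\mathscr{F}^+$) is exactly the right mechanism.
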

\begin{proof}
	For any $t\geq\tau$, $r\leq0$ and $\Lambda\in\mathscr{J}$, by \eqref{ItoF}, we obtain
	\begin{align}\label{RLLE1}
	&\Lambda(\u(t;\tau+r,\theta_{-t}\omega,\u^*))-\Lambda(\u(\tau;\tau+r,\theta_{-t}\omega,\u^*))\nonumber\\&=\int_{\tau}^{t}\left(\Lambda'(\u(\xi;\tau+r,\theta_{-t}\omega,\u^*)),\sigma\u(\xi;\tau+r,\theta_{-t}\omega,\u^*)\right)\d\widetilde{\W}(\xi)\nonumber\\&\quad+\int_{\tau}^{t}\left\langle\Lambda'(\u(\xi;\tau+r,\theta_{-t}\omega,\u^*)),L(\u(\xi;\tau+r,\theta_{-t}\omega,\u^*),\xi)\right\rangle\d \xi\nonumber\\&\quad+\frac{1}{2}\int_{\tau}^{t}\left(\Lambda''(\u(\xi;\tau+r,\theta_{-t}\omega,\u^*))(\sigma\u(\xi;\tau+r,\theta_{-t}\omega,\u^*))(\sigma\u(\xi;\tau+r,\theta_{-t}\omega,\u^*))\right)\d\xi,
	\end{align}
where $\widetilde{\W}(r)=\W(-t+r)-\W(-t)$. Since $t\geq\tau$, we infer from \eqref{Phi1}, \eqref{ISM1} and \eqref{RLLE1} that 
	\begin{align}\label{RLLE2}
	&\int_{\mathscr{A}(t,\omega)}\Lambda(\u^*)\uprho_{t,\omega}(\d\u^*)-\int_{\mathscr{A}(\tau,\theta_{-(t-\tau)}\omega)}\Lambda(\u^*)\uprho_{\tau,\theta_{-(t-\tau)}\omega}(\d\u^*)\nonumber\\&=\underset{s\to-\infty}{\rm LIM}\frac{1}{-s}\int_{s}^{0}\int_{\H}\Lambda\big(\Phi(t-\tau-r,\tau+r,\theta_{-(t-\tau-r)}\omega,\u^*)\big)\uprho_{\tau+r,\theta_{-(t-\tau-r)}\omega}(\d\u^*)\d r\nonumber\\&\quad -\underset{s\to-\infty}{\rm LIM}\frac{1}{-s}\int_{s}^{0}\int_{\H}\Lambda\big(\Phi(-r,\tau+r,\theta_{-(t-\tau-r)}\omega,\u^*)\big)\uprho_{\tau+r,\theta_{-(t-\tau-r)}\omega}(\d\u^*)\d r\nonumber\\&=\underset{s\to-\infty}{\rm LIM}\frac{1}{-s}\int_{s}^{0}\int_{\H}\Lambda\big(\u(t;\tau+r,\theta_{-t}\omega,\u^*)\big)\uprho_{\tau+r,\theta_{-(t-\tau-r)}\omega}(\d\u^*)\d r\nonumber\\&\quad -\underset{s\to-\infty}{\rm LIM}\frac{1}{-s}\int_{s}^{0}\int_{\H}\Lambda\big(\u(\tau;\tau+r,\theta_{-t}\omega,\u^*)\big)\uprho_{\tau+r,\theta_{-(t-\tau-r)}\omega}(\d\u^*)\d r \nonumber\\&=\underset{s\to-\infty}{\rm LIM}\frac{1}{-s}\int_{s}^{0}\int_{\H}\int_{\tau}^{t}\big(\Lambda'(\u(\xi;\tau+r,\theta_{-t}\omega,\u^*)),\nonumber\\&\qquad\qquad\qquad\qquad\qquad\quad\sigma\u(\xi;\tau+r,\theta_{-t}\omega,\u^*)\big)\d\widetilde{\W}(\xi)\uprho_{\tau+r,\theta_{-(t-\tau-r)}\omega}(\d\u^*)\d r\nonumber\\&\quad+\underset{s\to-\infty}{\rm LIM}\frac{1}{-s}\int_{s}^{0}\int_{\H}\int_{\tau}^{t}\big\langle\Lambda'(\u(\xi;\tau+r,\theta_{-t}\omega,\u^*)),\nonumber\\&\qquad\qquad\qquad\qquad\qquad\quad L(\u(\xi;\tau+r,\theta_{-t}\omega,\u^*),\xi)\big\rangle\d \xi\uprho_{\tau+r,\theta_{-(t-\tau-r)}\omega}(\d\u^*)\d r\nonumber\\&\quad+\frac{1}{2}\underset{s\to-\infty}{\rm LIM}\frac{1}{-s}\int_{s}^{0}\int_{\H}\int_{\tau}^{t}\big(\Lambda''(\u(\xi;\tau+r,\theta_{-t}\omega,\u^*))(\sigma\u(\xi;\tau+r,\theta_{-t}\omega,\u^*))\nonumber\\&\qquad\qquad\qquad\qquad\qquad\quad(\sigma\u(\xi;\tau+r,\theta_{-t}\omega,\u^*))\big)\d\xi\uprho_{\tau+r,\theta_{-(t-\tau-r)}\omega}(\d\u^*)\d r \nonumber\\&=\underset{s\to-\infty}{\rm LIM}\frac{1}{-s}\int_{s}^{0}\int_{\tau}^{t}\int_{\H}\big(\Lambda'(\u(\xi;\tau+r,\theta_{-t}\omega,\u^*)),\nonumber\\&\qquad\qquad\qquad\qquad\qquad\quad\sigma\u(\xi;\tau+r,\theta_{-t}\omega,\u^*)\big)\uprho_{\tau+r,\theta_{-(t-\tau-r)}\omega}(\d\u^*)\d\widetilde{\W}(\xi)\d r\nonumber\\&\quad+\underset{s\to-\infty}{\rm LIM}\frac{1}{-s}\int_{s}^{0}\int_{\tau}^{t}\int_{\H}\big\langle\Lambda'(\u(\xi;\tau+r,\theta_{-t}\omega,\u^*)),\nonumber\\&\qquad\qquad\qquad\qquad\qquad\quad L(\u(\xi;\tau+r,\theta_{-t}\omega,\u^*),\xi)\big\rangle\uprho_{\tau+r,\theta_{-(t-\tau-r)}\omega}(\d\u^*)\d \xi\d r\nonumber\\&\quad+\frac{1}{2}\underset{s\to-\infty}{\rm LIM}\frac{1}{-s}\int_{s}^{0}\int_{\tau}^{t}\int_{\H}\big(\Lambda''(\u(\xi;\tau+r,\theta_{-t}\omega,\u^*))(\sigma\u(\xi;\tau+r,\theta_{-t}\omega,\u^*))\nonumber\\&\qquad\qquad\qquad\qquad\qquad\quad(\sigma\u(\xi;\tau+r,\theta_{-t}\omega,\u^*))\big)\uprho_{\tau+r,\theta_{-(t-\tau-r)}\omega}(\d\u^*)\d\xi\d r.
\end{align}
It implies from \eqref{Phi1} and the cocycle property of $\Phi$ that
\begin{align*}
	\u(\xi;\tau+r,\theta_{-t}\omega,\u^*)&=\Phi(\xi-\tau-r,\tau+r,\theta_{-(t-\tau-r)}\omega,\u^*)\nonumber\\&=\Phi(\xi-\tau,\tau,\theta_{-(t-\tau)}\omega,\Phi(-r,\tau+r,\theta_{-(t-\tau-r)}\omega,\u^*)),
\end{align*}
which, along with \eqref{RLLE2}, Lemma \ref{G-Con} and Definition \ref{D-ISM} (definition of invariant sample measures), follows that
\begin{align}
&\int_{\mathscr{A}(t,\omega)}\Lambda(\u^*)\uprho_{t,\omega}(\d\u^*)-\int_{\mathscr{A}(\tau,\theta_{-(t-\tau)}\omega)}\Lambda(\u^*)\uprho_{\tau,\theta_{-(t-\tau)}\omega}(\d\u^*)\nonumber\\&= \underset{s\to-\infty}{\rm LIM}\frac{1}{-s}\int_{s}^{0}\int_{\tau}^{t}\int_{\H}\big(\Lambda'(\Phi(\xi-\tau,\tau,\theta_{-(t-\tau)}\omega,\u^*)),\nonumber\\&\qquad\qquad\qquad\qquad\qquad\quad\sigma\Phi(\xi-\tau,\tau,\theta_{-(t-\tau)}\omega,\u^*)\big)\uprho_{\tau,\theta_{-(t-\tau)}\omega}(\d\u^*)\d\widetilde{\W}(\xi)\d r\nonumber\\&\quad+\underset{s\to-\infty}{\rm LIM}\frac{1}{-s}\int_{s}^{0}\int_{\tau}^{t}\int_{\H}\big\langle\Lambda'(\Phi(\xi-\tau,\tau,\theta_{-(t-\tau)}\omega,\u^*)),\nonumber\\&\qquad\qquad\qquad\qquad\qquad\quad L(\Phi(\xi-\tau,\tau,\theta_{-(t-\tau)}\omega,\u^*),\xi)\big\rangle\uprho_{\tau,\theta_{-(t-\tau)}\omega}(\d\u^*)\d \xi\d r\nonumber\\&\quad+\frac{1}{2}\underset{s\to-\infty}{\rm LIM}\frac{1}{-s}\int_{s}^{0}\int_{\tau}^{t}\int_{\H}\big(\Lambda''(\Phi(\xi-\tau,\tau,\theta_{-(t-\tau)}\omega,\u^*))(\sigma\u(\xi-\tau,\tau,\theta_{-(t-\tau)}\omega,\u^*))\nonumber\\&\qquad\qquad\qquad\qquad\qquad\quad(\sigma\u(\xi-\tau,\tau,\theta_{-{t-\tau}}\omega,\u^*))\big)\uprho_{\tau,\theta_{-(t-\tau)}\omega}(\d\u^*)\d\xi\d r  \nonumber\\&= \int_{\tau}^{t}\int_{\H}\big(\Lambda'(\Phi(\xi-\tau,\tau,\theta_{-(t-\tau)}\omega,\u^*)),\sigma\Phi(\xi-\tau,\tau,\theta_{-(t-\tau)}\omega,\u^*)\big)\uprho_{\tau,\theta_{-(t-\tau)}\omega}(\d\u^*)\d\widetilde{\W}(\xi)\nonumber\\&\quad+\int_{\tau}^{t}\int_{\H}\big\langle\Lambda'(\Phi(\xi-\tau,\tau,\theta_{-(t-\tau)}\omega,\u^*)), L(\Phi(\xi-\tau,\tau,\theta_{-(t-\tau)}\omega,\u^*),\xi)\big\rangle\uprho_{\tau,\theta_{-(t-\tau)}\omega}(\d\u^*)\d \xi\nonumber\\&\quad+\frac{1}{2}\int_{\tau}^{t}\int_{\H}\big(\Lambda''(\Phi(\xi-\tau,\tau,\theta_{-(t-\tau)}\omega,\u^*))(\sigma\u(\xi-\tau,\tau,\theta_{-(t-\tau)}\omega,\u^*))\nonumber\\&\qquad\qquad\qquad\qquad\qquad\quad(\sigma\u(\xi-\tau,\tau,\theta_{-{t-\tau}}\omega,\u^*))\big)\uprho_{\tau,\theta_{-(t-\tau)}\omega}(\d\u^*)\d\xi\nonumber\\&=\int_{\tau}^{t}\int_{\H}\left(\Lambda'(\u^*),\sigma\u^*\right)\uprho_{\xi,\theta_{-(t-\xi)}\omega}(\d\u^*)\d\widetilde{\W}(\xi)+\int_{\tau}^{t}\int_{\H}\left\langle\Lambda'(\u^*),L(\u^*,\xi)\right\rangle\uprho_{\xi,\theta_{-(t-\xi)}\omega}(\d\u^*)\d \xi \nonumber\\&\quad+\frac{1}{2}\int_{\tau}^{t}\int_{\H}\left(\Lambda''(\u^*)(\sigma\u^*)(\sigma\u^*)\right)\uprho_{\xi,\theta_{-(t-\xi)}\omega}(\d\u^*)\d \xi,
\end{align}
which completes the proof.
\end{proof}
\begin{remark}
	It is quite obvious from the calculations of Section \ref{sec3} that the system \eqref{CNSE} has $\mathfrak{D}$-pullback attractor $\widehat{\mathscr{A}}=\{\widehat{\mathscr{A}}(t,\omega):t\in\R,\omega\in\Omega\}\in\mathfrak{D}$, and there exists a family of Borel probability measures $\{\widehat{\uprho}_{t,\omega}\}_{t\in\R,\omega\in\Omega}$ such that $\uprho_{t,\omega}$ is an invariant sample measure of system \eqref{CNSE} supported on $\widehat{\mathscr{A}}(t,\omega)$. Moreover, by \cite[Theorem 5.1]{CY}, we have that the family of measures $\{\widehat{\uprho}_{t,\omega}\}_{t\in\R,\omega\in\Omega}$ satisfies a random Liouville type equation, that is, for all $t\geq \tau$ and $\omega\in\Omega$,
	\begin{align}\label{RLTT}
		&\int_{\widehat{\mathscr{A}}(t,\omega)}\Lambda(\v)\widehat{\uprho}_{t,\omega}(\d\v)-\int_{\widehat{\mathscr{A}}(\tau,\theta_{-(t-\tau)}\omega)}\Lambda(\v)\widehat{\uprho}_{\tau,\theta_{-(t-\tau)}\omega}(\d\v)\nonumber\\&=\int_{\tau}^{t}\int_{\widehat{\mathscr{A}}(r,\theta_{-(t-r)}\omega)}\langle \widehat{L}(\v,r,\theta_{-(t-r)}\omega),\Lambda'(\v)\rangle\widehat{\uprho}_{r,\theta_{-(t-r)}\omega}(\d\v)\d r,\ \ \text{ for all }\ \ \Lambda\in\mathscr{J},
	\end{align}
	where $\widehat{L}(\v,t,\theta_{t}\omega)=-\nu \A\v-\left[\frac{\sigma^2}{2}-\sigma y(\theta_{t}\omega)\right]\v-\z^{-1}(t,\omega)\B\big(\v\big)+\z(t,\omega)\mathcal{P}\f$.
\end{remark}
\begin{remark}\label{Sto-Liv}
	The result of above Theorem \ref{SLTT} can be seen as a stochastic Liouville theorem. If the stochastic statistical equilibrium has been attained by 2D SNSE, then the statistical information do not change with respect to time, that is, $\Lambda'(\u(\cdot;\cdot,\omega,\cdot))=0$. Therefore, it implies from \eqref{ISM2} and \eqref{SLTT1} that for each $\omega\in\Omega$,
	\begin{align}\label{RLT}
	\int_{\mathscr{A}(t,\omega)}\Lambda(\u)\uprho_{t,\omega}(\d\u)&=\int_{\mathscr{A}(\tau,\theta_{-(t-\tau)}\omega)}\Lambda(\Phi(t-\tau,\tau,\theta_{\tau-t}\omega,\u))\uprho_{\tau,\theta_{-(t-\tau)}\omega}(\d\u)\nonumber\\&=\int_{\mathscr{A}(\tau,\theta_{-(t-\tau)}\omega)}\Lambda(\u)\uprho_{\tau,\theta_{-(t-\tau)}\omega}(\d\u),
\end{align}
	$\tau\in\R$  and  $t\geq\tau$. The above  equality \eqref{RLT} says that the sample measures $\{\uprho_{t,\omega}\}_{t\in\R,\omega\in\Omega}$ are invariant under the action of the non-autonomous random DS $\Phi$. It declares that, for each $t\in\R$ and $\omega\in\Omega$, the shape of the random attractor $\mathscr{A}(t,\omega)$ could change randomly with the evolution of time from $\tau$ to $t$, along with the sample points $\omega\in\Omega$, but the measures of $\mathscr{A}(\tau,\theta_{-(t-\tau)}\omega)$ and $\mathscr{A}(t,\omega)$ would be the same. This is the stochastic (or random) version of the \emph{Liouville Theorem} in Statistical Mechanics. Therefore, we say that the invariant sample measures $\{\uprho_{t,\omega}\}_{t\in\R,\omega\in\Omega}$ of the 2D SNSE \eqref{SNSE} satisfy \emph{a stochastic Liouville type theorem}.
\end{remark}

\section{Invariant Measures and Ergodicity}\label{sec5}\setcounter{equation}{0}
In this section, we demonstrate the existence and uniqueness of invariant measures and ergodicity for random DS $\Phi$ associated with the system \eqref{SNSE}. Since  we are applying the abstract theory established in the work \cite{CF}, we restrict our deterministic forcing term to be independent of time $t$. From now onward, $\f$ is  time-independent and $\f\in\L^2(\R^2)$ (autonomous case).

The authors  in the work \cite{CF} proved that if a random DS $\Phi$ has a compact invariant random set, then there exists an invariant measure for $\Phi$ (cf. \cite[Corollaries 4.4 and 4.6]{CF}). Hence, the existence of invariant measures for the 2D autonomous SNSE \eqref{SNSE} is a direct consequence of \cite[Corollaries 4.4 and 4.6]{CF} and Theorem \ref{MainTheoRA}, because the random attractor itself is a \emph{compact invariant random set}. 

Define two $\sigma$-algebras corresponding to the past and future, respectively, by
{\begin{align*}
	\mathscr{F}^-=\sigma\{\omega\mapsto\Phi(t,\theta_{-\tau}\omega): 0\leq t\leq \tau\}
\text{ and }
	\mathscr{F}^+=\sigma\{\omega\mapsto\Phi(t,\theta_{\tau}\omega): t,\tau\geq0\}.
\end{align*}}
\begin{theorem}[Corollary 4.6, \cite{CF}]\label{CFMT}
	Let $\omega\mapsto\mathscr{A}(\omega)$ be a $\Phi$-invariant compact set which is measurable with respect to the past $\mathscr{F}^-$ for a random DS $\Phi$ and $\Phi$ is a random DS  whose one-point motions form a Markov family, and such that $\mathscr{F}^+$ and $\mathscr{F}^-$ are independent. Then there exists an invariant measure $\upmu$ for the associated Markov semigroup. Furthermore, the limit
	\begin{align*}
		\uprho_{\omega}=\lim_{t\to\infty}\Phi(t,\theta_{-t}\omega)\upmu
	\end{align*}
exists $\mathbb{P}$-a.s., $\upmu=\int_{\Omega}\uprho_{\omega}\d\mathbb{P}(\omega)=\mathbb{E}[\uprho_{\cdot}]$, and $\uprho$ is a Markov measure.
\end{theorem}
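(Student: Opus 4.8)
The statement above is the abstract result \cite[Corollary 4.6]{CF}, so the plan is not to reprove the Crauel--Flandoli machinery from scratch but to exhibit the structure of its proof and to check that our setting supplies every hypothesis. The object to feed into it is the $(\H,\V)$-pullback random attractor $\mathscr{A}=\{\mathscr{A}(\tau,\omega)\}$ produced in Theorem \ref{MainTheoRA}: in the autonomous case ($\f\in\L^2(\R^2)$ time-independent) the cocycle collapses to a one-sided random dynamical system $\Phi(t,\omega)$ over $(\Omega,\mathscr{F},\mathbb{P},\{\theta_t\}_{t\ge0})$, and $\mathscr{A}(\omega)$ is a compact, $\Phi$-invariant random set. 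Because $\mathscr{A}(\omega)$ is built as a pullback limit of $\Phi(t,\theta_{-t}\omega,\mathcal{K}(\theta_{-t}\omega))$ — i.e.\ from the increments of $\W$ on $(-\infty,0]$ only — it is $\mathscr{F}^-$-measurable; the independent-increment property of $\W$ makes $\mathscr{F}^+$ and $\mathscr{F}^-$ independent; and the solution of \eqref{SNSE} with deterministic $\f$ is a Markov process, so the one-point motions $x\mapsto\Phi(t,\omega)x$ form a homogeneous Markov family with transition semigroup $(P_t\varphi)(x)=\mathbb{E}[\varphi(\Phi(t,\cdot)x)]$. Thus all the hypotheses of \cite[Corollary 4.6]{CF} are in force.

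The first half of the argument builds $\upmu$. By \cite[Corollary 4.4]{CF} the existence of the invariant compact random set $\mathscr{A}(\omega)$ yields a $\Phi$-invariant Borel probability measure $\mu$ on $\Omega\times\H$ with first marginal $\mathbb{P}$ and disintegration $\{\mu_\omega\}$ supported on $\mathscr{A}(\omega)$; since $\mathscr{A}$ is $\mathscr{F}^-$-measurable the disintegration may be chosen $\mathscr{F}^-$-measurable, and invariance reads $\Phi(t,\omega)\mu_\omega=\mu_{\theta_t\omega}$ $\mathbb{P}$-a.s.\ for all $t\ge0$. Set $\upmu:=\mathbb{E}[\mu_\cdot]=\int_\Omega\mu_\omega\,\d\mathbb{P}(\omega)$. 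Integrating the identity $\int_\H\varphi(\Phi(t,\omega)x)\,\mu_\omega(\d x)=\int_\H\varphi\,\d\mu_{\theta_t\omega}$ over $\omega$ and using $\theta_t$-invariance of $\mathbb{P}$ on the right gives $\mathbb{E}\big[\int_\H\varphi(\Phi(t,\omega)x)\,\mu_\omega(\d x)\big]=\int_\H\varphi\,\d\upmu$, while on the left the $\mathscr{F}^+$-measurability of $\omega\mapsto\Phi(t,\omega)$ and its independence from the $\mathscr{F}^-$-measurable $\omega\mapsto\mu_\omega$ give $\mathbb{E}[\varphi(\Phi(t,\omega)x)\mid\mathscr{F}^-]=(P_t\varphi)(x)$, whence the left side equals $\int_\H(P_t\varphi)\,\d\upmu$. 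Therefore $\int_\H(P_t\varphi)\,\d\upmu=\int_\H\varphi\,\d\upmu$ for all bounded continuous $\varphi$, i.e.\ $\upmu$ is invariant for the Markov semigroup.

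The second half produces the pullback limit. Using the cocycle identity $\Phi(t,\theta_{-t}\omega)=\Phi(t-s,\theta_{-(t-s)}\omega)\circ\Phi(s,\theta_{-t}\omega)$ together with the $P_s$-invariance of $\upmu$, one shows that for each bounded continuous $\varphi$ the process $t\mapsto\int_\H\varphi\,\d\big(\Phi(t,\theta_{-t}\omega)\upmu\big)$ is a bounded backward (reverse) martingale for the filtration generated by $\W$ on $[-t,0]$; hence it converges $\mathbb{P}$-a.s.\ as $t\to\infty$, and tightness of the family $\{\Phi(t,\theta_{-t}\omega)\upmu\}_t$ follows from the \emph{attraction} property of $\mathscr{A}$, which eventually pushes $\upmu$ into an arbitrary neighbourhood of the compact set $\mathscr{A}(\omega)$. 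This yields a random probability measure $\uprho_\omega:=\lim_{t\to\infty}\Phi(t,\theta_{-t}\omega)\upmu$, $\mathscr{F}^-$-measurable and supported on $\mathscr{A}(\omega)$, with $\mathbb{E}[\uprho_\cdot]=\lim_t\mathbb{E}[\Phi(t,\theta_{-t}\omega)\upmu]=\lim_t P_t^{\ast}\upmu=\upmu$; by uniqueness of disintegrations $\uprho_\omega=\mu_\omega$ $\mathbb{P}$-a.s., so that $\Phi(t,\omega)\uprho_\omega=\uprho_{\theta_t\omega}$ and $\uprho$ is a Markov measure in the sense of \cite{CF}.

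The step I expect to be the real obstacle is the backward-martingale convergence in the third paragraph — both the martingale identity itself and the observation that it is the attracting (not merely the invariant-compact) property of the random attractor that forces the dependence of $\Phi(t,\theta_{-t}\omega)\upmu$ on the initial datum $\upmu$ to wash out, so that the limit exists, is tight, and is independent of the particular choice of $\mu$ in \cite[Corollary 4.4]{CF}. A subsidiary but genuinely necessary point is the $\mathscr{F}^-$-measurability of $\mathscr{A}(\omega)$ and hence of the disintegration $\{\mu_\omega\}$, which is what makes the independence argument in the construction of $\upmu$ legitimate; this is handled by noting that, for fixed $\tau$, $\mathscr{A}(\tau,\omega)$ is generated by the maps $\Phi(s,\theta_{-t}\omega,\cdot)$, i.e.\ by $\W$ restricted to $(-\infty,0]$. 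Everything else — the Chapman--Kolmogorov/semigroup computations and the weak-convergence bookkeeping on $\mathscr{P}(\H)$ — is routine once these two points are in place.
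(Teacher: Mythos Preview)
The paper does not prove Theorem~\ref{CFMT} at all: it is quoted verbatim as \cite[Corollary~4.6]{CF} and used as a black box, with the application to SNSE deferred to Theorem~\ref{thm6.3}. So there is no ``paper's own proof'' to compare against; your proposal is an exposition of the Crauel--Flandoli argument, which the authors simply cite.

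That said, your sketch conflates two separate tasks. The abstract statement is about an arbitrary random DS $\Phi$ on a Polish space with an $\mathscr{F}^-$-measurable invariant compact set; its proof (your paragraphs two and three) lives entirely in \cite{CF} and has nothing to do with Navier--Stokes. The verification that the SNSE cocycle satisfies the hypotheses --- Markov one-point motions, independence of $\mathscr{F}^+$ and $\mathscr{F}^-$, $\mathscr{F}^-$-measurability of the attractor --- is not part of proving Theorem~\ref{CFMT} but of \emph{applying} it, and in the paper this is handled (tersely) in the paragraph before Theorem~\ref{thm6.3}. Your first paragraph mixes these two layers. If the assignment was truly to prove the abstract corollary, only your second and third paragraphs are relevant, and they are a reasonable outline of the \cite{CF} argument (invariant measure from \cite[Corollary~4.4]{CF}, average over $\omega$ using independence to get $P_t$-invariance, then backward-martingale convergence for the pullback limit); if the assignment was to justify its use here, only the first paragraph matters and the rest is superfluous.
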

\subsection{Existence}
Since the deterministic forcing term in the system \eqref{SNSE} is time independent, the non-autonomous random DS will reduce to autonomous random DS. Therefore, the random DS $\Phi:\R^+\times\Omega\times\H\to\H$, for $t\geq\tau$, $\omega\in\Omega$ and $\u_0\in\H$, is defined by
\begin{align}\label{Phi2}
	\Phi(t-\tau,\theta_{\tau}\omega,\u_{\tau}) =\u(t;\tau,\omega,\u_{\tau})=\frac{\v(t;\tau,\omega,\v_{\tau})}{\z(t,\omega)} \ \text{ with }\  \v_{\tau}=\z(\tau,\omega)\u_{\tau}.
\end{align}
For a Banach space $\X$, let $\mathscr{B}_b(\X)$ be the space of all bounded and Borel measurable functions on $\X$, and $\C_b(\X)$ be the space of all bounded and continuous functions on $\X$. Let us define the transition operator $\{\mathrm{T}_t\}_{t\geq 0}$ by 
\begin{align}\label{71}
	\mathrm{T}_t g(\x)=\int_{\Omega}g(\Phi(t,\omega,\x))\d\mathbb{P}(\omega)=\E\left[g(\Phi(t,\cdot,\x))\right],
\end{align}
for all $g\in\mathscr{B}_b(\H)$, where $\Phi$ is the random DS corresponding to the 2D SNSE \eqref{SNSE} defined by \eqref{Phi2}. Since $\Phi$ is continuous (Lemma \ref{Continuity}), the following result holds due to \cite[Proposition 3.8]{BL}. 
\begin{lemma}\label{Feller}
	The family $\{\mathrm{T}_t\}_{t\geq 0}$ is Feller, that is, $\mathrm{T}_tg\in\C_{b}(\H)$ if $g\in\C_b(\H)$. Moreover, for any $g\in\C_b(\H)$, $\mathrm{T}_tg(\x)\to g(\x)$ as $t\downarrow 0$. 
\end{lemma}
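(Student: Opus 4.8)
The plan is to establish the two assertions separately, each by a dominated convergence argument applied to the defining integral \eqref{71}. Boundedness is immediate: for $g\in\C_b(\H)$, any $\x\in\H$ and any $t\ge0$ one has $|\mathrm{T}_tg(\x)|\le\int_{\Omega}|g(\Phi(t,\omega,\x))|\d\mathbb{P}(\omega)\le\|g\|_{\infty}$, so $\mathrm{T}_tg$ is automatically bounded by $\|g\|_{\infty}$; the measurability of $\omega\mapsto g(\Phi(t,\omega,\x))$ that makes this integral meaningful follows from the $(\mathscr{F},\mathscr{B}(\H))$-measurability of the cocycle (a weakening of Proposition \ref{LusinC}) together with continuity of $g$. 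Thus the whole content of the lemma is the two continuity statements.

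For the Feller property I would fix $t\ge0$ and take a sequence $\x_n\to\x$ in $\H$. By \eqref{Phi2}, $\Phi(t,\omega,\cdot)$ equals the solution map of the conjugated deterministic system \eqref{CNSE} pre- and post-composed with the fixed linear scalings $\boldsymbol{y}\mapsto\z(0,\omega)\boldsymbol{y}$ and $\boldsymbol{y}\mapsto\z^{-1}(t,\omega)\boldsymbol{y}$; since Lemma \ref{Continuity} provides the pathwise continuity of that solution map from $\H$ into $\H$ (in fact into $\V$), the map $\x\mapsto\Phi(t,\omega,\x)$ is continuous on $\H$ for every $\omega\in\Omega$. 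Hence $g(\Phi(t,\omega,\x_n))\to g(\Phi(t,\omega,\x))$ pointwise in $\omega$, and as these quantities are all bounded by $\|g\|_{\infty}$, dominated convergence gives $\mathrm{T}_tg(\x_n)\to\mathrm{T}_tg(\x)$, i.e. $\mathrm{T}_tg\in\C_b(\H)$.

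For the continuity as $t\downarrow0$ I would use the pathwise time-regularity of the conjugated system: for each $\omega\in\Omega$ and each initial datum the weak solution of \eqref{CNSE} lies in $\mathrm{C}([\tau,+\infty);\H)$, so $\v(t;0,\omega,\v_{0})\to\v_{0}$ in $\H$ as $t\downarrow0$, while $\z(t,\omega)=e^{-\sigma y(\theta_{t}\omega)}\to\z(0,\omega)$ by continuity of $t\mapsto y(\theta_t\omega)$ on the $\theta$-invariant full-measure set (identified with $\Omega$ throughout the paper). Combining these with $\v_{0}=\z(0,\omega)\x$ yields $\Phi(t,\omega,\x)\to\x$ in $\H$ as $t\downarrow0$, for every $\omega$, and a further application of dominated convergence (dominating constant again $\|g\|_{\infty}$) shows $\mathrm{T}_tg(\x)\to g(\x)$.

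The argument is essentially routine; the only point requiring genuine care is that both convergences above must hold for each fixed $\omega$ (or at least for $\mathbb{P}$-a.e.\ $\omega$), not merely in probability, so that dominated convergence can be carried out under the $\mathbb{P}$-integral. This is precisely what the pathwise formulation of Lemma \ref{Continuity} and the pathwise regularity $\v\in\mathrm{C}([\tau,+\infty);\H)$ of the system \eqref{CNSE} supply, the randomness entering only through the scalar factor $\z(\cdot,\omega)$, which is continuous in time on the relevant full-measure set; no quantitative estimates beyond those already recorded in Lemmas \ref{lem3.6} and \ref{Continuity} are needed.
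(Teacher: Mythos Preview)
Your argument is correct and is essentially the content of the reference the paper invokes: the paper does not write out a proof but simply notes that the result follows from \cite[Proposition 3.8]{BL} together with the pathwise continuity of $\Phi$ established in Lemma \ref{Continuity}. Your dominated-convergence argument, driven by the $\omega$-wise continuity in the initial datum (via \eqref{Conti5}) and the $\omega$-wise time-continuity $\v\in\mathrm{C}([\tau,+\infty);\H)$ of the conjugated system, is exactly the standard route underlying that cited proposition.
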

\begin{definition}
	A Borel probability measure $\mu$ on $\H$  is called an \emph{invariant measure} for a Markov semigroup $\{\mathrm{T}_t\}_{t\geq 0}$ of Feller operators on $\C_b(\H)$ if and only if $$\mathrm{T}_{t}^*\mu=\mu, \ t\geq 0,$$ where $(\mathrm{T}_{t}^*\mu)(\Gamma)=\int_{\H}\mathrm{P}_{t}(\y,\Gamma)\mu(\d\y),$ for $\Gamma\in\mathscr{B}(\H)$ and  $\mathrm{T}_t(\y,\cdot)$ is the transition probability, $\mathrm{T}_{t}(\y,\Gamma)=\mathrm{T}_{t}(\chi_{\Gamma})(\y),\ \y\in\H$.
\end{definition}
One can establish that $\Phi$ is a Markov random DS (cf. \cite[Theorem 5.6]{CF}), that is, $\mathrm{T}_{s_1+s_2}=\mathrm{T}_{s_1}\mathrm{T}_{s_2}$, for all $s_1,s_2\geq 0$. 	It is known by Theorem \ref{CFMT} that there exists a Feller invariant probability measure $\mu$ for a Markov random DS $\Phi$ which has an invariant compact random set on a Polish space. Hence we have the following result due to Theorems \ref{MainTheoRA} and \ref{CFMT}.

\begin{theorem}\label{thm6.3}
	Assume that $\f\in\L^2(\R^2)$. Then, the Markov semigroup $\{\mathrm{T}_t\}_{t\geq 0}$ induced by the flow $\Phi$ on $\H$ has an invariant measure $\upmu$. The associated flow-invariant Markov measure $\uprho$ on $\Omega\times\H$ has the property that its disintegration $\omega\mapsto\uprho_{\omega}$ is supported by the attractor $\mathscr{A}(\omega)$, where $\mathscr{A}(\omega)$ is a random attractor for autonomous SNSE \eqref{1}.
\end{theorem}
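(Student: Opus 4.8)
The plan is to obtain the statement as a direct application of Theorem \ref{CFMT} (\cite[Corollary 4.6]{CF}), so that the work amounts to verifying its hypotheses for the autonomous cocycle $\Phi$ of \eqref{Phi2}: (a) $\Phi$ admits a $\Phi$-invariant compact random set measurable with respect to the past $\sigma$-algebra $\mathscr{F}^-$; (b) the one-point motions of $\Phi$ form a Markov family; and (c) $\mathscr{F}^-$ and $\mathscr{F}^+$ are independent.

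For (a), I would first note that with $\f$ time-independent the cocycle \eqref{Phi1}--\eqref{Phi2} no longer depends on the initial time, so Theorem \ref{MainTheoRA} specializes to a unique random attractor $\omega\mapsto\mathscr{A}(\omega)$, which is by construction compact and strictly invariant, $\Phi(t,\omega)\mathscr{A}(\omega)=\mathscr{A}(\theta_t\omega)$. To obtain the $\mathscr{F}^-$-measurability I would use the representation
\begin{align*}
	\mathscr{A}(\omega)=\bigcap_{s>0}\overline{\bigcup_{t\geq s}\Phi\bigl(t,\theta_{-t}\omega,\mathcal{K}(\theta_{-t}\omega)\bigr)}
\end{align*}
coming from Theorems \ref{AbstractResult} and \ref{MainTheoRA}, together with the fact that both the absorbing set $\mathcal{K}(\theta_{-t}\omega)$ of Lemma \ref{PA1} and the solution map $\Phi(t,\theta_{-t}\omega,\cdot)$ depend on $\omega$ only through the increments of $\W$ on $[-t,0]$. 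This causality is exactly what the It\^o formulation, via the stationary conjugation \eqref{COV} --- for which $y(\theta_{\zeta}\omega)$ at a negative time $\zeta$ is $\W|_{(-\infty,0]}$-measurable --- provides.

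For (b), I would invoke the fact recorded just before the statement (cf. \cite[Theorem 5.6]{CF}) that $\Phi$ is a Markov random dynamical system: the solution $\u(t;\tau,\omega,\u_{\tau})$ of \eqref{SNSE} depends on $\omega$ only through $\W$ on $[\tau,t]$, so the cocycle property forces the one-point motions to be Markovian with the Feller transition semigroup $\{\mathrm{T}_t\}_{t\geq0}$ of \eqref{71} (Lemma \ref{Feller}). For (c), the same causality places $\mathscr{F}^-$ and $\mathscr{F}^+$ inside the $\sigma$-algebras generated by $\W|_{(-\infty,0]}$ and $\W|_{[0,\infty)}$ respectively, and these are independent by the independent-increments property of two-sided Brownian motion. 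With (a)--(c) verified, Theorem \ref{CFMT} delivers the invariant measure $\upmu$ of $\{\mathrm{T}_t\}_{t\geq0}$, the $\mathbb{P}$-a.s. limit $\uprho_{\omega}=\lim_{t\to\infty}\Phi(t,\theta_{-t}\omega)\upmu$, and the flow-invariant Markov measure $\uprho$ on $\Omega\times\H$ with $\upmu=\E[\uprho_{\cdot}]$.

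To conclude that each $\uprho_{\omega}$ is carried by $\mathscr{A}(\omega)$, I would fix $\omega$ and an open neighbourhood $\mathcal{O}$ of the compact set $\mathscr{A}(\omega)$; given $\varepsilon>0$, choose a bounded $B\subset\H$ with $\upmu(B)\geq1-\varepsilon$ (tightness of $\upmu$ on the Polish space $\H$), use the pullback attraction of bounded sets to find $t_0$ with $\Phi(t,\theta_{-t}\omega,B)\subset\mathcal{O}$ for $t\geq t_0$, so that $(\Phi(t,\theta_{-t}\omega)\upmu)(\mathcal{O})\geq1-\varepsilon$ for such $t$, and then pass to the limit to get $\uprho_{\omega}(\overline{\mathcal{O}})\geq1-\varepsilon$; shrinking $\mathcal{O}$ down to $\mathscr{A}(\omega)$ and sending $\varepsilon\to0$ gives $\uprho_{\omega}(\mathscr{A}(\omega))=1$. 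The main obstacle I anticipate is (a) and (c), i.e. making precise that the cocycle depends causally on the driving noise so that the past and future $\sigma$-algebras split; this is precisely where the It\^o (rather than Stratonovich) formulation is indispensable, while the remaining steps are the standard invariant-measure machinery of \cite{CF}.
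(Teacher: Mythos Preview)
Your proposal is correct and follows essentially the same route as the paper: the paper's proof is nothing more than the sentence ``Hence we have the following result due to Theorems \ref{MainTheoRA} and \ref{CFMT}'', i.e.\ a direct appeal to \cite[Corollary 4.6]{CF} once the random attractor is in hand. You have simply fleshed out the verification of the hypotheses of Theorem \ref{CFMT} (past-measurability of the attractor, Markov property, independence of $\mathscr{F}^-$ and $\mathscr{F}^+$) and added an explicit support argument for $\uprho_\omega$, all of which the paper leaves implicit in the citation; there is no substantive difference in approach.
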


\begin{remark}\label{4.4}
	Theorem \ref{thm6.3}  establishes the existence of invariant measures in $\H$. Since we have proved the existence of a unique $(\H,\V)$-pullback $\mathfrak{D}$-random attractor, it can be shown in a similar way (same as in the case of $\H$) that there exists an invariant measure for the 2D SNSE \eqref{SNSE} in $\V$ also.
\end{remark}

\subsection{Uniqueness} In this section, we prove the uniqueness of invariant measures for the system \eqref{SNSE}  by using the linear structure of multiplicative noise and the exponential stability of solutions. For this purpose, we  consider the external forcing $\f=\textbf{0}$ in the system \eqref{SNSE}.

\begin{lemma}\label{ExpoStability}
Assume that \emph{$\f=\textbf{0}$}. Then, there exists $T(\omega)>0$ such that the solution of the system \eqref{SNSE} satisfies the following exponential estimate:
\begin{align}\label{U1}
	\|\u_1(t)-\u_2(t)\|^2_{\H}\leq e^{-\frac{\sigma^2}{4}t}\|\u_{1,0}-\u_{2,0}\|^2_{\H},  \ \  \text{ for all }\  t\geq T(\omega),
\end{align}
where $\u_1(t):=\u_{1}(t;0,\omega,\u_{1,0})$ and $\u_{2}(t):=\u_{2}(t;0,\omega,\u_{2,0})$ be two solutions of the system \eqref{CNSE} with respect to the initial data $\u_{1,0}$ and $\u_{2,0}$ at $0$, respectively. 
\end{lemma}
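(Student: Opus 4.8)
The plan is to work with the transformed (pathwise deterministic) system \eqref{CNSE} with $\f=\mathbf{0}$, exploit the cancellation $b(\boldsymbol\psi,\boldsymbol\psi,\boldsymbol\psi)=0$ from \eqref{b0}, and then undo the transformation \eqref{COV} to recover the estimate for $\u$. Write $\v_i(t)=\v(t;0,\omega,\v_{i,0})$ with $\v_{i,0}=\z(0,\omega)\u_{i,0}$, set $\mathscr{U}(t)=\v_1(t)-\v_2(t)$, and recall from \eqref{Conti4} in the proof of Lemma \ref{Continuity} that, with $\f=\mathbf{0}$,
\begin{align}\label{ES1}
	\frac{\d}{\d t}\|\mathscr{U}(t)\|^2_{\H}+\nu\|\nabla\mathscr{U}(t)\|^2_{\H}\leq\left[C\z^{-4}(t,\omega)\|\v_2(t)\|^2_{\H}\|\nabla\v_2(t)\|^2_{\H}+\sigma|y(\theta_t\omega)|\right]\|\mathscr{U}(t)\|^2_{\H},
\end{align}
for a.e. $t\geq 0$. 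The first step is to improve this: since $\f=\mathbf{0}$, the energy inequality \eqref{ue1} gives $\frac{\d}{\d t}\|\v(t)\|^2_{\H}+\left(\frac{\sigma^2}{2}-2\sigma y(\theta_t\omega)\right)\|\v(t)\|^2_{\H}+2\nu\|\nabla\v(t)\|^2_{\H}\leq 0$, so $\|\v_2(t)\|^2_{\H}$ decays like $\exp\{-\frac{\sigma^2}{2}t+2\sigma\int_0^t y(\theta_\zeta\omega)\d\zeta\}$, and integrating shows $\int_0^\infty \z^{-4}(t,\omega)\|\v_2(t)\|^2_{\H}\|\nabla\v_2(t)\|^2_{\H}\d t<\infty$ for each $\omega$ (here one uses $\z^{-4}=e^{4\sigma y(\theta_t\omega)}$ together with the sub-exponential growth \eqref{Z3} of $y$; the product $\z^{-4}\|\v_2\|^2_{\H}$ times the integrable-in-time quantity $\|\nabla\v_2\|^2_{\H}$ stays controlled because the exponential decay in $\|\v_2\|^2_{\H}$ beats the sub-exponential growth of $\z^{-4}$).

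\textbf{Main step.} Dropping the good term $\nu\|\nabla\mathscr{U}\|^2_{\H}$ in \eqref{ES1} and applying Gronwall's inequality on $[0,t]$ yields
\begin{align}\label{ES2}
	\|\mathscr{U}(t)\|^2_{\H}\leq\|\mathscr{U}(0)\|^2_{\H}\exp\left\{C\int_0^t\z^{-4}(\zeta,\omega)\|\v_2(\zeta)\|^2_{\H}\|\nabla\v_2(\zeta)\|^2_{\H}\d\zeta+\sigma\int_0^t|y(\theta_\zeta\omega)|\d\zeta\right\}.
\end{align}
By the first step the first integral in the exponent is bounded uniformly in $t$ by some finite $C(\omega)$, and by \eqref{Z3} we have $\frac1t\int_0^t|y(\theta_\zeta\omega)|\d\zeta\to 0$ as $t\to\infty$. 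Now convert back to $\u$: from \eqref{COV}, $\u_i(t)=\z^{-1}(t,\omega)\v_i(t)$, so $\|\u_1(t)-\u_2(t)\|^2_{\H}=\z^{-2}(t,\omega)\|\mathscr{U}(t)\|^2_{\H}=e^{2\sigma y(\theta_t\omega)}\|\mathscr{U}(t)\|^2_{\H}$, while $\|\mathscr{U}(0)\|^2_{\H}=\z^2(0,\omega)\|\u_{1,0}-\u_{2,0}\|^2_{\H}=e^{-2\sigma y(\omega)}\|\u_{1,0}-\u_{2,0}\|^2_{\H}$. Collecting the exponents,
\begin{align*}
	\|\u_1(t)-\u_2(t)\|^2_{\H}\leq\|\u_{1,0}-\u_{2,0}\|^2_{\H}\,\exp\left\{2\sigma y(\theta_t\omega)-2\sigma y(\omega)+C(\omega)+\sigma\int_0^t|y(\theta_\zeta\omega)|\d\zeta\right\}.
\end{align*}
By \eqref{Z3}, as $t\to\infty$ we have $2\sigma y(\theta_t\omega)=o(t)$ and $\sigma\int_0^t|y(\theta_\zeta\omega)|\d\zeta=o(t)$; hence the whole exponent is $o(t)$, and in particular there exists $T(\omega)>0$ such that for all $t\geq T(\omega)$ the exponent is bounded above by $-\frac{\sigma^2}{4}t$ (indeed it is eventually dominated by any negative linear function of $t$). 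This gives \eqref{U1}.

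\textbf{Expected obstacle.} The only genuinely delicate point is justifying the finiteness (uniformly in $t$) of $\int_0^\infty\z^{-4}(\zeta,\omega)\|\v_2(\zeta)\|^2_{\H}\|\nabla\v_2(\zeta)\|^2_{\H}\d\zeta$: one must combine the exponential decay of $\|\v_2(\zeta)\|^2_{\H}$ coming from the $\frac{\sigma^2}{2}$ term in the transformed equation with the sub-exponential growth of $\z^{-4}(\zeta,\omega)=e^{4\sigma y(\theta_\zeta\omega)}$ and with the time-integrability of $\|\nabla\v_2(\zeta)\|^2_{\H}$ that follows from integrating the energy inequality. Concretely one writes $\z^{-4}\|\v_2\|^2_{\H}\le C(\omega)e^{-\frac{\sigma^2}{4}\zeta}$ for $\zeta$ large (using $\frac1\zeta\int_0^\zeta y\to0$ and $\frac{y(\theta_\zeta\omega)}{\zeta}\to0$), pulls the resulting $\sup_\zeta$ out, and is left with $\int_0^\infty\|\nabla\v_2(\zeta)\|^2_{\H}\d\zeta<\infty$. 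Everything else — the Gronwall step, the change of variables, and absorbing the $o(t)$ terms into $-\frac{\sigma^2}{4}t$ for $t$ large — is routine. (One should also note the estimate is pathwise, with $T(\omega)$ genuinely depending on $\omega$ through these sub-exponential-growth thresholds, which is exactly what is needed later for the uniqueness of the invariant measure.)
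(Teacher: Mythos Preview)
Your argument contains a fatal gap at the very last step, and it stems from two compounding errors.

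First, by quoting \eqref{Conti4} you have already thrown away the term $\frac{\sigma^2}{2}\|\mathscr{U}\|^2_{\H}$ that appears on the left-hand side of \eqref{Conti2}. That term is the \emph{only} source of exponential decay in the whole problem; once it is gone, no amount of massaging the remaining exponent can produce a factor $e^{-\frac{\sigma^2}{4}t}$. The paper's proof keeps it, obtaining instead
\[
\frac{\d}{\d t}\|\mathfrak X(t)\|^2_{\H}+\Big[\sigma^2-2\sigma y(\theta_t\omega)-\tfrac{2}{\nu}\z^{-2}(t,\omega)\|\nabla\v_2(t)\|^2_{\H}\Big]\|\mathfrak X(t)\|^2_{\H}\le 0,
\]
so that Gronwall yields an exponent containing the genuine linear term $-\sigma^2 t$.

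Second, your claim that $\sigma\int_0^t|y(\theta_\zeta\omega)|\d\zeta=o(t)$ is false. Property \eqref{Z3} asserts $\frac{1}{t}\int_0^t y(\theta_\zeta\omega)\d\zeta\to 0$, \emph{without} the absolute value; for the stationary Ornstein--Uhlenbeck process the ergodic theorem gives $\frac{1}{t}\int_0^t|y(\theta_\zeta\omega)|\d\zeta\to\E|y|>0$, so this integral grows \emph{linearly} with a positive slope. Consequently your exponent is not $o(t)$ at all: it is $\sigma\E|y|\cdot t+o(t)$, and your bound on $\|\u_1(t)-\u_2(t)\|^2_{\H}$ actually diverges. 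Even setting this aside, the inference ``the exponent is $o(t)$, hence eventually $\le -\frac{\sigma^2}{4}t$'' is a logical non sequitur: any non-negative $o(t)$ function (e.g.\ a constant, or $\sqrt{t}$) is a counterexample.

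The fix is exactly what the paper does: go back to \eqref{Conti2}, retain the $\frac{\sigma^2}{2}\|\mathscr{U}\|^2_{\H}$ term, and estimate the trilinear form so that the $y$-contribution enters as $2\sigma y(\theta_t\omega)$ rather than $\sigma|y(\theta_t\omega)|$; then $-\sigma^2 t+2\sigma\int_0^t y(\theta_\zeta\omega)\d\zeta\le -\frac{\sigma^2}{2}t$ for $t$ large by \eqref{Z3}, and the remaining term $\frac{2}{\nu}\int_0^t\z^{-2}\|\nabla\v_2\|^2\d\zeta$ is shown to be bounded in $t$ (this is the step you correctly anticipated as the delicate one). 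Your finiteness argument for the nonlinear integral is essentially right, but it cannot compensate for the missing $-\sigma^2 t$.
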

\begin{proof}
	Let  $\v_1(t):=z(t,\omega)\u_{1}(t)$ and $\v_{2}(t):=z(t,\omega)\u_{2}(t)$ be two solutions of the system \eqref{CNSE} with respect to the initial data $\v_{1,0}=z(0,\omega)\u_{1,0}$ and $\v_{2,0}=z(0,\omega)\u_{2,0}$ at $0$, respectively. Then $\mathfrak{X}(\cdot)=\v_{1}(\cdot)-\v_{2}(\cdot)$ with $\mathfrak{X}(0)=\v_{1,0}-\v_{2,0}$ satisfies
	\begin{align}\label{ES1}
		\frac{\d\mathfrak{X}(t)}{\d t}+\nu \A\mathfrak{X}(t)+\left[\frac{\sigma^2}{2}-\sigma y(\theta_{t}\omega)\right]\mathfrak{X}(t)=-\z^{-1}(t,\omega)\left\{\B\big(\v_1(t)\big)-\B\big(\v_2(t)\big)\right\},
	\end{align}
	for a.e. $t\geq0$ in $\V^*$. Multiplying \eqref{ES1} with $\mathfrak{X}(\cdot)$ and then integrating over $\R^2$, we infer
	\begin{align}\label{ES2}
		&\frac{1}{2}\frac{\d}{\d t} \|\mathfrak{X}(t)\|^2_{\H} +\nu \|\nabla\mathfrak{X}(t)\|^2_{\H} + \left[\frac{\sigma^2}{2}-\sigma y(\theta_{t}\omega)\right]\|\mathfrak{X}(t)\|^2_{\H} \nonumber\\&=-\z^{-1}(t,\omega)\left\langle\B\big(\v_1(t)\big)-\B\big(\v_2(t)\big), \mathfrak{X}(t)\right\rangle,
	\end{align}
	for a.e. $t\geq0$. Using \eqref{b0}, \eqref{441}, H\"older's, Ladyzhenskaya's and Young's inequalities, we obtain
	\begin{align}\label{ES3}
		\left| \z^{-1}(t,\omega)\left\langle\B\big(\v_1\big)-\B\big(\v_2\big), \mathfrak{X}\right\rangle\right|&=\left|\z^{-1}(t,\omega)b(\mathfrak{X},\mathfrak{X},\v_2)\right|\nonumber\\&\leq 2^{\frac{1}{2}}\z^{-1}(t,\omega)\|\mathfrak{X}\|_{\H}\|\nabla\mathfrak{X}\|_{\H}\|\nabla\v_2\|_{\H}\nonumber\\&\leq\frac{\nu}{2}\|\nabla\mathfrak{X}\|^2_{\H}+\frac{\z^{-2}(t,\omega)}{\nu}\|\nabla\v_2\|^2_{\H}\|\mathfrak{X}\|^2_{\H}.
	\end{align}
	Making use of \eqref{ES3} in \eqref{ES2}, we infer
	\begin{align*}
		&	\frac{\d}{\d t} \|\mathfrak{X}(t)\|^2_{\H} +\left[\sigma^2-2\sigma y(\theta_{t}\omega)-\frac{2\z^{-2}(t,\omega)}{\nu}\|\nabla\v_2\|^2_{\H}\right]\|\mathfrak{X}(t)\|^2_{\H} \leq 0,
	\end{align*}
 for a.e. $t\geq0$ and for all $\omega\in\Omega$,  and an application of variation of constant formula implies 
	\begin{align*}
		\|\mathfrak{X}(t)\|^2_{\H}&\leq \exp\left\{-\sigma^2t +2\sigma \int_{0}^{t}y(\theta_{r}\omega)\d r+\frac{2}{\nu}\int_{0}^{t}e^{2\sigma y(\theta_{r}\omega)}\|\nabla\v_2(r)\|^2_{\H}\d r \right\}\|\mathfrak{X}(0)\|^2_{\H},
	\end{align*} 
	for all $t\geq0$ and $\omega\in\Omega$. Using the transformation \eqref{Phi2} and the properties \eqref{Z3} of the continuous process $y(\cdot)$, we obtain that there exists a time $T_1(\omega)>0$ such that 
		\begin{align}\label{ES6}
		&\|\u_1(t)-\u_2(t)\|^2_{\H}\nonumber\\&\leq \exp\left\{-\sigma^2t+2\sigma y(\theta_{t}\omega) +2\sigma \int_{0}^{t}y(\theta_{r}\omega)\d r+\frac{2}{\nu}\int_{0}^{t}e^{2\sigma y(\theta_{r}\omega)}\|\nabla\v_2(r)\|^2_{\H}\d r-2\sigma y(\omega) \right\}\nonumber\\&\quad\times\|\u_{1,0}-\u_{2,0}\|^2_{\H}\nonumber\\&\leq \exp\left\{-\frac{\sigma^2}{2}t+\frac{2}{\nu}\int_{0}^{t}e^{2\sigma y(\theta_{r}\omega)}\|\nabla\v_2(r)\|^2_{\H}\d r-2\sigma y(\omega) \right\} \|\u_{1,0}-\u_{2,0}\|^2_{\H}, 
	\end{align} 
for all $t\geq T_1(\omega)$ and $\omega\in\Omega$. From the first equation of the system \eqref{CNSE}, \eqref{b0} and Young's inequality, we obtain
\begin{align*}
	&\frac{\d}{\d t} \|\v_2(t)\|^2_{\H}+ \left[\frac{\sigma^2}{2}-2 \sigma y(\theta_{t}\omega)\right]\|\v_2(t)\|^2_{\H} +2\nu\|\nabla\v_2(t)\|^2_{\H} \leq 0,
\end{align*}
which gives (by variation of constant formula)
\begin{align*}
		& \|\v_2(t)\|^2_{\H}e^{\frac{\sigma^2}{2}t-2\sigma\int_{0}^{t}y(\theta_{r}\omega)\d r}+2\nu \int_{0}^{t}e^{\frac{\sigma^2}{2}s-2\sigma\int_{0}^{s}y(\theta_{r}\omega)\d r}\|\nabla\v_2(s)\|^2_{\H}\d s \leq \|\v_{2,0}\|^2_{\H},\nonumber\\
		&\Rightarrow 
		2\nu \int_{\xi}^{t}e^{\frac{\sigma^2}{2}s-2\sigma\int_{0}^{s}y(\theta_{r}\omega)\d r}\|\nabla\v_2(s)\|^2_{\H}\d s\leq \|\v_{2,0}\|^2_{\H},
\end{align*}
for all $t\geq\xi\geq0$ and $\omega\in\Omega$.  By the ergodicity property \eqref{Z3} of the continuous process $y(\cdot)$, we find $T_2(\omega)>0$ such that 
\begin{align}\label{ES7}
	\int_{T_2}^{t}e^{\frac{\sigma^2}{4}s}\|\nabla\v_2(s)\|^2_{\H}\d s\leq \int_{T_2}^{t}e^{\frac{\sigma^2}{2}s-2\sigma\int_{0}^{s}y(\theta_{r}\omega)\d r}\|\nabla\v_2(s)\|^2_{\H}\d s& \leq \frac{1}{2\nu}\|\v_{2,0}\|^2_{\H}, \  \ \mbox{ for all $t\geq T_2(\omega)$.}
\end{align}
From $\lim\limits_{t\to \pm \infty} \frac{|y(\theta_t\omega)|}{|t|}=0$, there exists a time $T_3(\omega)\geq T_2(\omega)$ such that 
\begin{align}\label{ES8}
	&\int_{0}^{t}e^{2\sigma y(\theta_{r}\omega)}\|\nabla\v_2(r)\|^2_{\H}\d r\nonumber\\&=\int_{0}^{T_3}e^{2\sigma y(\theta_{r}\omega)}\|\nabla\v_2(r)\|^2_{\H}\d r+\int_{T_3}^{t}e^{2\sigma y(\theta_{r}\omega)}\|\nabla\v_2(r)\|^2_{\H}\d r\nonumber\\&\leq \sup_{r\in[0,T_3]}\left[e^{2\sigma y(\theta_{r}\omega)}\right]\int_{0}^{T_3}\|\nabla\v_2(r)\|^2_{\H}\d r +\int_{T_3}^{t}e^{\frac{\sigma^2}{4}r}\|\nabla\v_2(r)\|^2_{\H}\d r\nonumber\\&\leq \sup_{r\in[0,T_3]}\left[e^{2\sigma y(\theta_{r}\omega)}\right]\int_{0}^{T_3}\|\nabla\v_2(r)\|^2_{\H}\d r +\int_{T_2}^{t}e^{\frac{\sigma^2}{4}r}\|\nabla\v_2(r)\|^2_{\H}\d r\nonumber\\&\leq \frac{2}{\sigma^2} \sup_{r\in[0,T_3]}\left[e^{2\sigma y(\theta_{r}\omega)}\right]\rho(T_3,\omega,\v_{2,0},\f) +\frac{1}{2\nu}\|\v_{2,0}\|^2_{\H}:=\rho^*(T_3,\omega,\v_{2,0},\f),
\end{align}
for all $t\geq T_3(\omega)$ and $\omega\in\Omega$, where we have used \eqref{ei2} and \eqref{ES7} in the final inequality. Let us choose $T_4(\omega)\geq\max\{T_1(\omega),T_3(\omega)\}$, then \eqref{ES6} and \eqref{ES8} imply
\begin{align}\label{ES9}
		&\|\u_1(t)-\u_2(t)\|^2_{\H}\leq \exp\left\{-\frac{\sigma^2}{2}t+\frac{2}{\nu}\rho^*(T_3,\omega,\v_{2,0},\f)-2\sigma y(\omega) \right\} \|\u_{1,0}-\u_{2,0}\|^2_{\H},
\end{align}
for all $t\geq T_4(\omega)$ and $\omega\in\Omega$. Now, there exists a time $T(\omega)\geq T_4(\omega)$ such that 
\begin{align}\label{ES10}
	\frac{2}{\nu}\rho^*(T_3,\omega,\v_{2,0},\f)-2\sigma y(\omega) \leq\frac{\sigma^2}{4}t,
\end{align}
for all $t\geq T(\omega)$ and $\omega\in\Omega$. By \eqref{ES9}-\eqref{ES10}, we have
\begin{align*}
	\|\u_1(t)-\u_2(t)\|^2_{\H}\leq e^{-\frac{\sigma^2}{4}t}\|\u_{1,0}-\u_{2,0}\|^2_{\H},  \ \ \ \text{ for all } t\geq T(\omega)\ \text{ and }\ \omega\in\Omega,
\end{align*}
which completes the proof.
\end{proof}

\begin{theorem}\label{UEIM}
	Assume that \emph{$\f=\textbf{0}$} and $\u_0\in\H$ be given. Then, there is a unique invariant measure to the system \eqref{SNSE} which is ergodic and strongly mixing.
\end{theorem}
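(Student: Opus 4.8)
The plan is to identify the unique invariant measure explicitly as the Dirac mass $\delta_{\mathbf{0}}$ at the origin of $\H$, and then to read off ergodicity and strong mixing from the exponential stability already established. Since $\f=\mathbf{0}$, the constant function $\u\equiv\mathbf{0}$ solves \eqref{SNSE} (every term vanishes), so the random cocycle satisfies $\Phi(t,\omega,\mathbf{0})=\mathbf{0}$ for all $t\ge0$ and $\omega\in\Omega$; consequently the transition probabilities of \eqref{71} obey $\mathrm{T}_t(\mathbf{0},\cdot)=\delta_{\mathbf{0}}$ and hence $\mathrm{T}_t^*\delta_{\mathbf{0}}=\delta_{\mathbf{0}}$ for every $t\ge0$. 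Thus $\delta_{\mathbf{0}}$ is an invariant measure for the Feller semigroup $\{\mathrm{T}_t\}_{t\ge0}$ (its existence also follows from Theorem \ref{thm6.3}, since for $\f=\mathbf{0}$ the random attractor reduces to $\{\mathbf{0}\}$).

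For uniqueness I would apply Lemma \ref{ExpoStability} with the second solution taken to be $\u_2\equiv\mathbf{0}$: for every $\x\in\H$ and $\mathbb{P}$-almost every $\omega$ there is $T(\omega)>0$ with $\|\Phi(t,\omega,\x)\|_{\H}^2\le e^{-\frac{\sigma^2}{4}t}\|\x\|_{\H}^2$ for all $t\ge T(\omega)$, so that $\Phi(t,\omega,\x)\to\mathbf{0}$ in $\H$ as $t\to\infty$, $\mathbb{P}$-a.s. Given $g\in\C_b(\H)$, continuity of $g$ yields $g(\Phi(t,\omega,\x))\to g(\mathbf{0})$ a.s., and dominated convergence (with uniform bound $\|g\|_{\infty}$) gives $\mathrm{T}_tg(\x)=\E[g(\Phi(t,\cdot,\x))]\to g(\mathbf{0})$ as $t\to\infty$, for every $\x\in\H$. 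If now $\mu$ is any invariant Borel probability measure for $\{\mathrm{T}_t\}_{t\ge0}$, then $\int_{\H}g\,\d\mu=\int_{\H}\mathrm{T}_tg\,\d\mu\to\int_{\H}g(\mathbf{0})\,\d\mu=g(\mathbf{0})$ by dominated convergence, whence $\int_{\H}g\,\d\mu=g(\mathbf{0})=\int_{\H}g\,\d\delta_{\mathbf{0}}$ for all $g\in\C_b(\H)$, i.e. $\mu=\delta_{\mathbf{0}}$.

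Ergodicity and strong mixing are then immediate: the pointwise convergence $\mathrm{T}_tg(\x)\to\int_{\H}g\,\d\delta_{\mathbf{0}}$ for every $\x\in\H$ and $g\in\C_b(\H)$ is precisely the statement that the invariant measure $\delta_{\mathbf{0}}$ is strongly mixing for the Markov semigroup, and it forces the convergence of the time averages $\frac{1}{T}\int_{0}^{T}\mathrm{T}_tg(\x)\,\d t\to\int_{\H}g\,\d\delta_{\mathbf{0}}$, hence ergodicity; in any case, uniqueness of the invariant measure already entails ergodicity on its own.

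The genuinely hard part of the argument --- the exponential contraction of any two solutions, obtained by balancing the viscous dissipation against the growth generated by the linear multiplicative It\^o noise and exploiting the ergodic properties \eqref{Z3} of the Ornstein--Uhlenbeck process --- has been isolated in Lemma \ref{ExpoStability}. Beyond that, the only subtlety is that the stabilization time $T(\omega)$ is random; this causes no trouble, since the limits above are taken as $t\to\infty$ along each fixed path $\omega$ and only afterwards integrated against $\mathbb{P}$ via dominated convergence.
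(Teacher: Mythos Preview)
Your proof is correct and rests on the same key input as the paper's---Lemma \ref{ExpoStability}---but you organize the argument differently. The paper does not identify the invariant measure in advance: it fixes an arbitrary invariant measure $\upmu$, shows for Lipschitz $\psi$ that $\mathrm{T}_t\psi(\u_0)\to\int_{\H}\psi\,\d\upmu$ via the Lipschitz bound $|\psi(\u_1)-\psi(\u_2)|\le L_\psi\|\u_1-\u_2\|_{\H}$ and the exponential contraction, extends to $\psi\in\C_b(\H)$ by density, and then repeats the estimate to show any two invariant measures coincide. This route requires the moment condition $\int_{\H}\|\v_0\|_{\H}\,\upmu(\d\v_0)<\infty$ (tacitly justified by the compact support of $\upmu$ on the attractor) and the density of Lipschitz functions. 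Your route is more elementary: by taking $\u_2\equiv\mathbf{0}$ in Lemma \ref{ExpoStability} you get pathwise convergence $\Phi(t,\omega,\x)\to\mathbf{0}$, and then continuity of $g\in\C_b(\H)$ together with dominated convergence handles all bounded continuous test functions at once, without any Lipschitz step or moment hypothesis. The paper's approach has the minor advantage of exhibiting the exponential mixing rate explicitly at the semigroup level; yours has the advantage of making the limiting measure $\delta_{\mathbf{0}}$ visible from the outset (which the paper only notes afterwards in a remark) and of avoiding the dependence of $T(\omega)$ on the second initial datum $\v_0$ when integrating over $\upmu(\d\v_0)$.
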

\begin{proof}
	For $\psi\in \text{Lip}(\H)$ (Lipschitz $\psi$) and an invariant measure $\upmu$, we have for all $t\geq T(\omega)$,
	\begin{align*}
			\left|\mathrm{T}_t\psi(\u_0)-\int_{\H}\psi(\v_0)\upmu(\d \v_0)\right|&=	\left|\E[\psi(\u(t,\u_0))]-\int_{\H}\mathrm{T}_t\psi(\v_0)\upmu(\d \v_0)\right|\nonumber\\&=\left|\int_{\H}\E\left[\psi(\u(t,\u_0))-\psi(\u(t,\v_0))\right]\upmu(\d \v_0)\right|\nonumber\\&\leq L_{\psi}\int_{\H}\E\left[\left\|\u(t,\u_0)-\u(t,\v_0)\right\|_{\H}\right]\upmu(\d \v_0)\nonumber\\&\leq L_{\psi}\mathbb{E}\left[e^{-\frac{\sigma^2}{8}t}\right]\int_{\H}\left\|\u_0-\v_0\right\|_{\H}\upmu(\d \v_0)
		\to 0 \ \text{ as }\  t\to \infty,
	\end{align*}
	since $\int_{\H}\|\v_0\|_{\H}\upmu(\d \v_0)+\int_{\H}\|\u_0\|_{\H}\upmu(\d \v_0)<+\infty$. Hence, we conclude
	\begin{align}\label{U2}
		\lim_{t\to\infty}\mathrm{T}_t\psi(\u_0)=\int_{\H}\psi(\v_0)\d\upmu(\v_0), \ \upmu\text{-a.s., for all }\ \u_0\in\H\ \text{ and }\  \psi\in\C_b(\H),
	\end{align} 
	by the density of $\text{Lip}(\H)$ in $\C_b (\H)$. Since we have a stronger result that $\mathrm{T}_t\psi(\u_0)$ converges exponentially fast to the equilibrium, this property is known as the \emph{exponential mixing property}. Now suppose that  $\widetilde{\upmu}$ is an another invariant measure, then we have for all $t\geq T(\omega)$,
	\begin{align}
		&\left|\int_{\H}\psi(\u_0)\upmu(\d\u_0)-\int_{\H}\psi(\v_0)\widetilde{\upmu}(\d \v_0)\right|\nonumber\\&= \left|\int_{\H}\mathrm{T}_t\psi(\u_0)\upmu(\d \u_0)-\int_{\H}\mathrm{T}_t\psi(\v_0)\widetilde{\upmu}(\d \v_0)\right|\nonumber\\&=\left|\int_{\H}\int_{\H}\left[\mathrm{T}_t\psi(\u_0)-\mathrm{T}_t\psi(\v_0)\right]\upmu(\d \u_0)\wi\upmu(\d \v_0)\right|\nonumber\\&=\left|\int_{\H}\int_{\H}\E\left[\psi(\u(t,\u_0))-\psi(\u(t,\v_0))\right]\upmu(\d \u_0)\wi\upmu(\d \v_0)\right|\nonumber\\&\leq L_{\psi}\int_{\H}\int_{\H}\E\left[\left\|\u(t,\u_0)-\u(t,\v_0)\right\|_{\H}\right]\upmu(\d \u_0)\wi\upmu(\d \v_0) \nonumber\\&\leq L_{\psi}\mathbb{E}\left[e^{-\frac{\sigma^2}{8}t}\right] \int_{\H}\int_{\H}\E\left[\left\|\u_0-\v_0\right\|_{\H}\right]\upmu(\d \u_0)\wi\upmu(\d \v_0) \nonumber\\&\leq L_{\psi}\mathbb{E}\left[e^{-\frac{\sigma^2}{8}t}\right]\bigg(\int_{\H}\|\u_0\|_{\H}\upmu(\d \u_0)+\int_{\H}\|\v_0\|_{\H}\widetilde{\upmu}(\d \v_0)\bigg)\to 0 \ \text{ as }\ t\to\infty,
	\end{align}
	since $\int_{\H}\|\u_0\|_{\H}\upmu(\d \u_0)<+\infty$ and $\int_{\H}\|\v_0\|_{\H}\widetilde{\upmu}(\d \v_0)<+\infty$.  Since $\upmu$ is the unique invariant measure for $(\mathrm{T}_t)_{t\geq 0}$, it follows from \cite[Theorem 3.2.6]{GDJZ} that $\upmu$ is ergodic also. 
\end{proof}
\begin{remark}
	Since, for $\f=\mathbf{0}$, Theorem \ref{UEIM}  demonstrates the unique invariant measure in $\H$. As $\V\subset\H$ is a closed subspace, the uniqueness of invariant measure in $\V$ also follows from Theorem \ref{UEIM}. 
\end{remark}
\begin{remark}
	As zero solution is always a solution to the system \eqref{1} with zero initial data and $\f=\boldsymbol{0}$, it infers from Lemma \ref{ExpoStability} that the non-zero solutions to the system \eqref{1} are converging to $0$ exponentially fast when time goes to $\infty$. Particularly, the unique ergodic measure of the system \eqref{1} with $\f=\boldsymbol{0}$ is just the trivial one, that is, the Dirac delta measure centered at zero, which is obviously invariant since	the zero solution is the steady state of the system. Similarly, the pullback random attractor is just a singleton set $\{0\}$. 
\end{remark}

	\medskip\noindent
	{\bf Acknowledgments:} The first author would like to thank the Council of Scientific $\&$ Industrial Research (CSIR), India for financial assistance (File No. 09/143(0938)/2019-EMR-I).  M. T. Mohan would  like to thank the Department of Science and Technology (DST), Govt of India for Innovation in Science Pursuit for Inspired Research (INSPIRE) Faculty Award (IFA17-MA110). We would also like to thank the anonymous reviewer of the paper \cite{KM6} for raising a query regarding the use of It\^o noise in place of  Stratonovich noise,  which is  the first motivation of this paper.

	\medskip\noindent	{\bf  Declarations:} 
	
	\noindent 	{\bf  Ethical Approval:}   Not applicable 
	
	\noindent  {\bf   Competing interests: } The authors declare no competing interests. 
	
	\noindent 	{\bf   Authors' contributions: } All authors have contributed equally. 
	
	\noindent 	{\bf   Funding: } CSIR, India, File No. 09/143(0938)/2019-EMR-I (Kush Kinra), DST, India, IFA17-MA110 (M. T. Mohan)

	\noindent 	{\bf   Availability of data and materials: } Not applicable.

\end{document}